\DeclareMathOperator*{\argmax}{arg\,max}
\DeclareMathOperator*{\argmin}{arg\,min}
\def\hlinewd#1{%
  \noalign{\ifnum0=`}\fi\hrule \@height #1 \futurelet
   \reserved@a\@xhline}
\title{Jack Derangements}
\author{	
	Nathan Lindzey\\
	\small Department of Computer Science\\[-0.8ex]
	\small Technion -- Israel Institute of Technology\\[-0.8ex] 
	\small\tt lindzeyn@gmail.com
}
\begin{document}
\maketitle

\begin{abstract}
For each integer partition $\lambda \vdash n$ we give a simple combinatorial expression for the sum of the Jack character $\theta^\lambda_\alpha$ over the integer partitions of $n$ with no singleton parts. 
For $\alpha = 1,2$ this gives closed forms for the eigenvalues of the permutation and perfect matching derangement graphs, resolving an open question in algebraic graph theory. 
A byproduct of the latter is a simple combinatorial formula for the immanants of the matrix $J-I$ where $J$ is the all-ones matrix, which might be of independent interest.
Our proofs center around a Jack analogue of a hook product related to Cayley's $\Omega$--process in classical invariant theory, which we call \emph{the principal lower hook product}.
\end{abstract}

\section{Introduction}\label{sec:intro}

Let $x := x_1,x_2,\cdots$ be an infinite set of indeterminates and let $\alpha \in \mathbb{R}$ be a real parameter. The \emph{(integral form) Jack polynomials} $J_\lambda := J_\lambda(x;\alpha )$ are defined as the unique basis $\{J_\lambda\}$ for the ring of symmetric functions that satisfies the following properties.
\begin{itemize}
	\item \emph{Orthogonality:} $\langle J_\lambda, J_\mu \rangle_\alpha = 0$ if $\lambda \neq \mu$ where $\langle \cdot,\cdot \rangle_\alpha$ is the \emph{deformed Hall inner product} defined on the \emph{power sum basis} $\{ p_\lambda\}$ such that $\langle p_\lambda , p_\mu \rangle_\alpha := \delta_{\lambda,\mu} \alpha^{\ell(\lambda)} z_\lambda$.
	\item \emph{Triangularity:} $J_\lambda = \sum_{\mu \trianglelefteq \lambda} c_{\lambda \mu} m_\mu$ where $\{m_\mu\}$ is the \emph{monomial basis} and $\trianglelefteq$ denotes the \emph{dominance ordering} on integer partitions $\lambda \vdash n$.
	\item \emph{Normalization:} $[m_{1^n}]J_\lambda = n!$.
\end{itemize}
\noindent We refer the reader to~\cite[Ch.~IV \textsection 10]{Macdonald95} and~\cite{Stanley89} for a detailed treatment of these polynomials. In this work, we restrict our attention to the power sum expansion of the Jack polynomials
\[
	J_\lambda = \sum_{\mu \vdash n} \theta^{\lambda}_\alpha(\mu) p_\mu \quad \text{ for all } \lambda \vdash n.
\]
The $\theta^{\lambda}_\alpha$'s are called the \emph{Jack characters} because they are a deformation of a normalization of the \emph{irreducible characters} $\chi^\lambda$ of \emph{the symmetric group} $S_n$. In particular, the Jack polynomials at $\alpha = 1,2$ recover the integral forms of the \emph{Schur} and \emph{Zonal polynomials} respectively. These specializations have been widely studied in algebraic combinatorics due to their connections with $S_n$ and the set $\mathcal{M}_{2n}$ of \emph{perfect matchings of the complete graph} $K_{2n}$, but for arbitrary $\alpha \in \mathbb{R}$ many open questions remain~\cite{AlexanderssonHW21,Stanley89,Macdonald95}. This state of affairs has led to an investigation of the Jack characters since they provide \emph{dual} information about Jack polynomials that may shed light on these open questions; however, the dual path towards understanding Jack polynomials is paved with its own conjectures~\cite{Hanlon88,Lassalle98,Lassalle09}. 
We make modest progress in this direction by considering \emph{sums} of $\theta_\alpha^\lambda(\mu)$'s rather than single $\theta_\alpha^\lambda(\mu)$'s.

Let $\text{fp}(\mu)$ be the number of singleton parts of $\mu$. Define the \emph{$\lambda$-Jack derangement sum}
\[
	\eta^\lambda_\alpha := \sum_{\substack{\mu \vdash n \\ \text{fp}(\mu) = 0}} \theta^{\lambda}_\alpha(\mu)
\]
to be the sum of the Jack character $\theta^\lambda_\alpha$ over the \emph{derangements}, i.e., partitions $\mu \vdash n$ with no singleton parts. To motivate this definition, recall that if $\lambda \vdash n$ is the cycle type of a permutation $\pi \in S_n$, then $\pi$ is a \emph{derangement} if and only if $\text{fp}(\lambda) = 0$. Let $D_n \subseteq S_n$ be the set of derangements of $S_n$. One can show that $\eta^{\lambda}_1$ is a scaled character sum over $D_n$, i.e.,
$$\eta^{\lambda}_1 = \sum_{\substack{\mu \vdash n \\ \text{fp}(\mu) = 0}} \theta^{\lambda}_1(\mu) =\sum_{\substack{\mu \vdash n \\ \text{fp}(\mu) = 0}} \frac{|C_\mu|}{\chi^\lambda(1)} \chi^\lambda(\mu) = \frac{1}{\chi^\lambda(1)}\sum_{\pi \in D_n} \chi^\lambda(\pi) $$ 
where $C_\mu \subseteq S_n$ is the conjugacy class corresponding to $\mu \vdash n$. For $\alpha = 2$, an analogous result holds for the so-called \emph{perfect matching derangements} of $\mathcal{M}_{2n}$ (see~\cite{Lindzey17}, for example). We are unaware of any combinatorial models for $\alpha \neq 1,2$, but it is natural to view $\eta^\lambda_\alpha$ as the $\alpha$-analogue of the character sum over derangements, which is the main focus of this paper.

While little is known about the Jack derangement sums for arbitrary $\alpha \in \mathbb{R}$, the $\alpha = 1,2$ cases have received special attention in algebraic graph theory because they are in fact the eigenvalues of the so-called \emph{derangement graphs}.

\begin{itemize}
\item The set $\{\eta^\lambda_1\}_{\lambda \vdash n}$ is the spectrum of the \emph{permutation derangement graph}
\[
	\Gamma_{n,1} = (S_n,E) \text{ where } \pi\sigma \in E \Leftrightarrow \sigma \pi^{-1} \in D_n,
\]
i.e., the normal Cayley graph of $S_n$ generated by $D_n$. 
See~\cite[Ch.~14]{GodsilMeagher} or \cite{Renteln07} for more details on the permutation derangement graph.

\item The set $\{\eta^\lambda_2\}_{\lambda \vdash n}$ is the spectrum of the \emph{perfect matching derangement graph}
\[
	\Gamma_{n,2} = (\mathcal{M}_{2n},E) \text{ where } mm' \in E \Leftrightarrow m \cap m' = \emptyset. 
\]
See~\cite[Ch.~15]{GodsilMeagher} or \cite{Lindzey17} for more details on the perfect matching derangement graph.
\end{itemize}
These graphs made their debut in \emph{Erd\H{o}s--Ko--Rado combinatorics}, a branch of extremal combinatorics that studies how large families of combinatorial objects can be subject to the restriction that any two of its members intersect. By design, the \emph{independent sets} (sets of vertices that are pairwise non-adjacent) of $\Gamma_{n,\alpha}$ are in one-to-one correspondence with the so-called \emph{intersecting families} of permutations and perfect matchings for $\alpha = 1,2$, and the spectra of these graphs have been used to give tight upper bounds and characterizations of the largest intersecting families of $S_n$ and $\mathcal{M}_{2n}$. We refer the reader to~\cite{GodsilMeagher} for a comprehensive account of algebraic techniques in Erd\H{o}s--Ko--Rado combinatorics.
 
The derangement graphs are interesting in their own right since they are natural analogues of the celebrated \emph{Kneser graph}\footnote{Recall that the \emph{Kneser graph} is the graph defined on $k$-sets of $\{1,2,\cdots,n\}$ such that two $k$-sets are adjacent if they are disjoint.}, a cornerstone of algebraic graph theory~\cite[Ch.~7]{GodsilRoyle}. Because the algebraic combinatorics of permutations and perfect matchings are more baroque than that of subsets, the eigenvalues of the derangement graphs have proven to be far more challenging to understand. The following is a brief overview of the results in this area. 
 
The first non-trivial recursion for the eigenvalues of the permutation derangement graph was derived by Renteln~\cite{Renteln07} using determinantal formulas for the \emph{shifted Schur functions}~\cite{OkunkovO97}, which he used to calculate the minimum eigenvalue of the permutation derangement graph. Using different techniques, Ellis~\cite{Ellis12} later computed the minimum eigenvalue of the permutation derangement graph. Deng and Zhang~\cite{DengZ11} determined the second largest eigenvalue. In~\cite{KuW10}, Ku and Wales investigated some interesting properties of the eigenvalues of the permutation derangement graph. In particular, they proved \emph{The Alternating Sign Theorem}, namely, that $\text{sgn}~\eta^\lambda_1 = (-1)^{|\lambda|-\lambda_1}$ for all $\lambda$, and they offered a conjecture on the magnitudes of the eigenvalues known as the \emph{Ku--Wales Conjecture}. In~\cite{KuW13a}, Ku and Wong proved this conjecture by deriving another recursive formula using shifted Schur functions that also led to a simpler proof of the Alternating Sign Theorem.

It was soon noticed that the algebraic properties of the perfect matching derangement graph parallel those of the permutation derangement graph. The minimum eigenvalue of the perfect matching derangement graph was computed by Godsil and Meagher~\cite{GodsilM15} and later by Lindzey~\cite{LindzeyPhd,Lindzey20}. An analogue of the Alternating Sign Theorem was conjectured in~\cite{Lindzey17,GodsilMeagher} which was recently proven by both Renteln~\cite{Renteln21} and Koh et al~\cite{KohKW23}. In an earlier effort to prove this conjecture, Ku and Wong~\cite{KuW18} give recursive formulas for $\eta^\lambda_2$ and a few closed forms for select shapes. In~\cite{Murali20}, Srinivasan gives more computationally efficient formulas for the eigenvalues of the perfect matching derangement graph. Godsil and Meagher ask whether an analogue of the Ku--Wales conjecture holds for the perfect matching derangement graph~\cite[pg.~316]{GodsilMeagher}. The latter has remained open since the eigenvalues of the perfect matching derangement graph have defied nice recursive expressions akin to permutation derangement graph. This is because the aforementioned determinantal formulas for shifted Schur functions do not exist for shifted Zonal polynomials or shifted Jack polynomials. 

The main shortcoming of the known eigenvalue formulas for the derangement graphs is that they cannot be evaluated efficiently, i.e., they lack ``good formulas". Indeed, finding closed forms for these eigenvalues was deemed a difficult open problem~\cite[pg. 316]{GodsilMeagher}, perhaps due to the formal hardness of evaluating the irreducible characters of the symmetric group~\cite{PakP17,IkenmeyerPP22,Pak22}. 
Our results show that good formulas for these eigenvalues exist.  

To state our main results we need a few definitions. Let $h^\lambda_*(i,j) :=  \alpha a_\lambda(i,j) + l_\lambda(i,j)  + 1$ be the \emph{lower hook length} of the cell $(i,j) \in \lambda$ where $a_\lambda(i,j)$ and $l_\lambda(i,j)$ denote \emph{arm length} and \emph{leg length} respectively (see Section~\ref{sec:jack} for definitions). We define
\[
  H_*^1(\lambda)  := h^\lambda_*(1,1) h^\lambda_*(1,2) \cdots h^\lambda_*(1,\lambda_1)
\]
to be the \emph{principal lower hook product} of the integer partition $\lambda$. For $\alpha = 1$, the lower hook length is just the usual notion of hook length, in which case we call $H^1_*(\lambda)$ the \emph{principal hook product}.  Note that the principal hook product for $\lambda = (n)$ is simply $n!$. 

It turns out that the principal hook product for arbitrary $\lambda$ arises naturally in classical invariant theory, namely, in the evaluation of a differential operator known as \emph{Cayley's $\Omega$--process} (see~\cite{CanfieldW81}). Independently, Filmus and Lindzey~\cite{FilmusL22a} observe a similar phenomenon in their study of harmonic polynomials on perfect matchings, wherein they show that the principal lower hook product appears in the evaluation of a family of differential operators acting polynomial spaces associated with perfect matchings. From the results of~\cite{FilmusL22a}, we show in Section~\ref{sec:php} that the principal hook product $H^1_*(\lambda)$ counts an interesting class of colored permutations $\mathcal{S}_\lambda$, defined as follows. 

For each $i \in [n] := \{1,2,\ldots,n\}$, we assign a list of colors $L(i) \subseteq [m]$ for some $m \in \mathbb{N}$. We define a \emph{colored permutation} $(c,\sigma)$ to be an assignment of colors $c = c_1,c_2,\ldots, c_n$ such that $c_i \in L(i)$ and a permutation $\sigma \in \text{Sym}([n])$ such that $\sigma(i) = j \Rightarrow c_i = c_j$, i.e., each cycle of the permutation is monochromatic. Any partition $\lambda$ defines a color list on each element $i$ of the symbol set $[\lambda_1]$ by setting $L(i) := [\lambda'_i]$ where $\lambda'$ denotes the \emph{transpose} or \emph{conjugate} partition of $\lambda$. We define $\mathcal{S}_\lambda$ to be the set of all such colored permutations, formally, 
\[
	\mathcal{S}_\lambda := \{ (c \in [\lambda'_1] \times \cdots \times [\lambda'_{\lambda_1}], \sigma \in S_{\lambda_1}) : \sigma(i) = j \Rightarrow c_i = c_j \text{ for all } i \in [\lambda_1]\}.
\]
We say that a colored permutation $(c,\sigma) \in \mathcal{S}_\lambda$ is a \emph{derangement} if $\sigma(i) = i \Rightarrow c_i \neq 1$ for all $1 \leq i \leq \lambda_1$. In other words, these are the colored permutations that have no colored cycles in common with $(1,\ldots,1, ()) \in \mathcal{S}_\lambda$. Let $\mathcal{D}^\lambda$ be the set of derangements of $\mathcal{S}_\lambda$, and let $\mathcal{D}^\lambda_{k}$ be the set of derangements of $\mathcal{S}_\lambda$ with exactly $k$ disjoint cycles. We define $D^\lambda := |\mathcal{D}^\lambda|$ and $d^\lambda_k := |\mathcal{D}^\lambda_k|$, so that 
\[
	D^\lambda = d^\lambda_1 + d^\lambda_2 + \cdots + d^\lambda_{\lambda_1}.
\]
For $\lambda = (n)$, the $d^\lambda_k$'s recover the \emph{(unsigned) associated Stirling numbers of the first kind}, i.e., the number of derangements of $S_n$ that have precisely $k$ disjoint cycles (see~\cite[pg.~256]{Comtet74}). 
\emph{Colored perfect matchings} $\mathcal{M}_\lambda$ and their derangements $\mathcal{D}_\lambda'$ can be defined in a similar but slightly more complicated manner, which we defer to Section~\ref{sec:php}. 

For any $\alpha \in \mathbb{R}$, we define 
\[
	D^\lambda_\alpha  := \sum_{k=1}^{\lambda_1} d^\lambda_k \alpha^{\lambda_1 - k}
\]
to be the \emph{$\lambda$-Jack derangement number}. 

Our first main result is that the Jack derangement sums equal the Jack derangement numbers (up to sign).
\begin{theorem}\label{thm:main} For any shape $\lambda$ and $\alpha \in \mathbb{R}$, we have
\[
	\eta_\alpha^\lambda = (-1)^{|\lambda| - \lambda_1} D^\lambda_\alpha
\]
\end{theorem}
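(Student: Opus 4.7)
The plan is to realize $\eta^\lambda_\alpha$ as an evaluation of the Jack polynomial $J_\lambda$ under a specific specialization of the power sums, and then to compute this evaluation using the principal lower hook product machinery developed in Section~\ref{sec:php}.

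The starting observation is that $\eta^\lambda_\alpha = \phi(J_\lambda)$, where $\phi$ is the $\mathbb{R}[\alpha]$-algebra homomorphism on the ring of symmetric functions determined by $\phi(p_1) = 0$ and $\phi(p_k) = 1$ for $k \geq 2$. This is immediate from $J_\lambda = \sum_{\mu \vdash n} \theta^\lambda_\alpha(\mu)\, p_\mu$ together with multiplicativity of $\phi$: the quantity $\phi(p_\mu) = \prod_i \phi(p_{\mu_i})$ vanishes exactly when $\mu$ has a singleton part and equals $1$ otherwise. Computing $\eta^\lambda_\alpha$ thus reduces to evaluating the single symmetric-function quantity $\phi(J_\lambda)$.

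To compute $\phi(J_\lambda)$, I would follow the operator-theoretic path hinted at in the introduction. The Jack deformation of Cayley's $\Omega$-process produces $J_\lambda$ from a distinguished polynomial with normalization controlled by $H^1_*(\lambda)$, and the combinatorial identity of Section~\ref{sec:php} expands $H^1_*(\lambda) = \sum_{k=1}^{\lambda_1} s^\lambda_k \alpha^{\lambda_1 - k}$, where $s^\lambda_k$ enumerates the colored permutations in $\mathcal{S}_\lambda$ with exactly $k$ cycles. Tracing $\phi$ through this identity rewrites $\phi(J_\lambda)$ as a weighted sum over $\mathcal{S}_\lambda$; the constraint $\phi(p_1) = 0$ then translates, via Möbius inversion on the set of fixed points of $\sigma$ that bear color $1$, into the derangement condition defining $\mathcal{D}^\lambda$. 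This yields
\[
\phi(J_\lambda) \;=\; (-1)^{|\lambda|-\lambda_1} \sum_{(c,\sigma) \in \mathcal{D}^\lambda} \alpha^{\lambda_1 - \mathrm{cyc}(\sigma)} \;=\; (-1)^{|\lambda|-\lambda_1} D^\lambda_\alpha,
\]
and the sign is accounted for by the identity $|\lambda|-\lambda_1 = \sum_{j \geq 1}(\lambda'_j - 1)$, which counts exactly the ``extra color slots'' above color $1$ across the columns of $\lambda$ and thus controls the parity of the inclusion--exclusion.

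The base case $\lambda = (n)$ is clean: the closed form $\theta^{(n)}_\alpha(\mu) = \alpha^{n - \ell(\mu)}\, n!/z_\mu$ for one-row shapes gives $\eta^{(n)}_\alpha = \sum_{\mu : \mathrm{fp}(\mu)=0} \alpha^{n-\ell(\mu)} n!/z_\mu$, and regrouping partitions by their cycle count $k$ recovers $\sum_k d^{(n)}_k \alpha^{n-k} = D^{(n)}_\alpha$, with the sign $+1$. The principal obstacle will be the second step: making the Jack deformation of the $\Omega$-process identity sufficiently explicit that $\phi$ can be tracked box by box and the colored-permutation expansion can be extracted with the correct cycle-tracking $\alpha$-weighting. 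The classical $\alpha = 1$ version delivers the principal hook product cleanly, but the general-$\alpha$ lift must be set up so that its interaction with $\phi$ reproduces exactly the colored-derangement enumerator $D^\lambda_\alpha$. Once that bridge is built, the sign-reversing involution on color-$1$ fixed points is routine combinatorial bookkeeping.
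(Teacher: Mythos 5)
Your overall architecture is right — specialize $J_\lambda$ at $p_1=0$, $p_k=1$ ($k\geq 2$), convert to principal lower hook products, and finish with inclusion--exclusion over color-$1$ fixed points — but the proposal leaves its central step unproved, and you say as much yourself. The bridge from $\phi(J_\lambda)$ to hook products is not a single ``$\Omega$-process identity'' that one can trace $\phi$ through: Cayley's $\Omega$-process appears in the paper only as historical motivation for $H^1_*(\lambda)$ and plays no role in any proof. What is actually needed is the chain $\phi(J_\lambda)=\sum_{k}\frac{(-1)^{|\lambda|-k}}{k!}J^\star_k(\lambda)$ (Theorem~\ref{thm:jack}, which requires the Cauchy formula for Jack polynomials, the Pieri rule, and the Okounkov--Olshanski binomial-type formula $d^{\lambda/\mu}=d^\lambda P^\star_\mu(\lambda)/|\lambda|^{\underline{|\mu|}}$), followed by the evaluations $J^\star_k(\lambda)/k!=\sum_{T\in\mathcal{T}^k_\lambda}w_\alpha(T)=\sum_{\mu\preceq_{\lambda_1-k}\lambda}H^1_*(\mu)$ of Alexandersson--F\'eray and Filmus--Lindzey (Equations~(\ref{eq:af}) and~(\ref{eq:fl})), yielding Theorem~\ref{thm:php}. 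None of this is supplied or even named in your proposal; ``making the Jack deformation of the $\Omega$-process sufficiently explicit'' is precisely the content you would have to invent. A second, smaller gap: you invoke the expansion $H^1_*(\lambda)=\sum_k s^\lambda_k\,\alpha^{\lambda_1-k}$ with $s^\lambda_k$ counting colored permutations by cycle number, attributing it to ``the combinatorial identity of Section~4''; but the cited result of~\cite{FilmusL22a} is only the unweighted cardinality statement at $\alpha=1,2$. The $\alpha$-weighted refinement is true but requires its own proof (the paper establishes it via the column-peeling recursion in Proposition~\ref{prop:count}).

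It is worth saying that your proposed endgame is not only viable but arguably cleaner than the paper's. Granting Theorem~\ref{thm:php} and the weighted identity $H^1_*(\mu)=\sum_{(c,\sigma)\in\mathcal{S}_\mu}\alpha^{\mu_1-\#\mathrm{cyc}(\sigma)}$, a direct signed count works for \emph{all} $\alpha$: each $\mu\preceq_k\lambda$ corresponds to forcing $k$ color-$1$ fixed points, and since each forced fixed point adds one cycle while $\mu_1=\lambda_1-k$, the weight $\alpha^{\lambda_1-\#\mathrm{cyc}}$ is preserved under the embedding $\mathcal{S}_\mu\hookrightarrow\mathcal{S}_\lambda$, so inclusion--exclusion gives $\sum_k(-1)^k\sum_{\mu\preceq_k\lambda}H^1_*(\mu)=D^\lambda_\alpha$ directly. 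The paper instead routes the general-$\alpha$ case through the minor expansions of Section~\ref{sec:minors}, the finite-difference formula of Theorem~\ref{thm:k}, the rencontres-number identity of Theorem~\ref{thm:dnk}, and a Poisson-moment computation in the $\alpha$-falling-factorial basis (Proposition~\ref{prop:count}) — machinery it wants anyway for the closed forms in Sections~\ref{sec:eigs} and~\ref{sec:eigs2}. So your shortcut is a genuine simplification of the last third of the argument, but it does not discharge the analytic/symmetric-function heart of the theorem, which remains missing from the proposal. Also, a small correction: the global sign $(-1)^{|\lambda|-\lambda_1}$ is produced by the reindexing in Theorem~\ref{thm:jack}, not by the parity of the inclusion--exclusion as you suggest.
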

\noindent Theorem~\ref{thm:main} gives simpler, unified, and more general proofs of all the aforementioned results on the derangement graphs, which we list below. 
\begin{corollary}[Alternating Sign Theorem]\label{thm:ast} For any shape $\lambda$ and $\alpha \geq 0$, we have
\[
	\emph{sgn}~\eta_\alpha^\lambda = (-1)^{|\lambda| - \lambda_1}.
\]
\end{corollary}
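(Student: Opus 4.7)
The plan is to derive this corollary directly from Theorem~\ref{thm:main}, which identifies $\eta_\alpha^\lambda$ with $(-1)^{|\lambda|-\lambda_1} D_\alpha^\lambda$. With that identity in hand, the sign of $\eta_\alpha^\lambda$ is controlled entirely by the sign of $D_\alpha^\lambda$, so the corollary reduces to showing that $D_\alpha^\lambda \geq 0$ whenever $\alpha \geq 0$.

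For the non-negativity, I would unpack the combinatorial definition
\[
    D_\alpha^\lambda \;=\; \sum_{k=1}^{\lambda_1} d_k^\lambda \, \alpha^{\lambda_1-k},
\]
and observe that each coefficient $d_k^\lambda = |\mathcal{D}_k^\lambda|$ is the cardinality of an explicit finite set, namely the colored derangements of $\mathcal{S}_\lambda$ with exactly $k$ cycles, and hence is a non-negative integer. Because $\alpha \geq 0$, every monomial $d_k^\lambda \alpha^{\lambda_1-k}$ is non-negative, so $D_\alpha^\lambda \geq 0$. Plugging this into the identity from Theorem~\ref{thm:main} then forces $\mathrm{sgn}\,\eta_\alpha^\lambda$ to coincide with $(-1)^{|\lambda|-\lambda_1}$ whenever $\eta_\alpha^\lambda \neq 0$.

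The deduction itself is therefore essentially one line; the entire substance of the corollary has been absorbed into Theorem~\ref{thm:main}. The ``hard part'' is not the argument above but the combinatorial identification of $\eta_\alpha^\lambda$ as, up to a global sign, a polynomial in $\alpha$ with manifestly non-negative integer coefficients. This manifest positivity is precisely what the shifted Schur (or shifted Jack) approaches of~\cite{KuW10, Renteln21, KohKW23} lack: in those proofs the alternating sign pattern emerges only indirectly from an induction on recursive identities, whereas here it is read off from a set-theoretic count. Once Theorem~\ref{thm:main} is established the corollary is immediate, and it simultaneously recovers the Alternating Sign Theorems for the permutation ($\alpha = 1$) and perfect matching ($\alpha = 2$) derangement graphs, as well as an interpolating family for all intermediate $\alpha \geq 0$.
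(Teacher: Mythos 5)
Your proposal is correct and is essentially identical to the paper's proof, which likewise deduces the corollary in one line from Theorem~\ref{thm:main} together with the observation that $D_\alpha^\lambda = \sum_k d_k^\lambda \alpha^{\lambda_1-k} \geq 0$ for $\alpha \geq 0$ because each $d_k^\lambda$ counts a finite set. Your parenthetical caveat about the case $\eta_\alpha^\lambda = 0$ is a fair point that the paper itself glosses over, but otherwise there is nothing to add.
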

\begin{corollary}[Ku--Wales Theorem]\label{thm:kwt}
For all $\mu, \lambda \vdash n$ such that $\mu_1 = \lambda_1$ and $\alpha \geq 0$, we have
$$\mu \trianglelefteq \lambda  \Rightarrow |\eta_\alpha^\mu| \leq |\eta_\alpha^\lambda|.$$
\end{corollary}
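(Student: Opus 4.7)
The plan is to deduce Corollary~\ref{thm:kwt} from Theorem~\ref{thm:main} by establishing the coefficient-wise inequality $d^\mu_k\le d^\lambda_k$ for every $k$. Since $|\mu|=|\lambda|$ and $\mu_1=\lambda_1$, the signs $(-1)^{|\lambda|-\lambda_1}$ and $(-1)^{|\mu|-\mu_1}$ agree, and $D^\lambda_\alpha, D^\mu_\alpha$ are polynomials in $\alpha$ with nonnegative integer coefficients. Thus Theorem~\ref{thm:main} reduces $|\eta^\mu_\alpha|\le|\eta^\lambda_\alpha|$ to $D^\mu_\alpha\le D^\lambda_\alpha$, and since both sides have matching exponents $\alpha^{\lambda_1-k}=\alpha^{\mu_1-k}$, this polynomial inequality for all $\alpha\ge 0$ is implied by $d^\mu_k\le d^\lambda_k$ for every $k$.

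Next I would reduce to the dominance cover case. The condition $\mu_1=\lambda_1$ forces every partition along any saturated dominance chain between $\mu$ and $\lambda$ to have first part equal to $\lambda_1$ (since the first part is nondecreasing along an upward chain and equal at both endpoints). So it suffices to handle $\mu\lessdot\lambda$. Passing to conjugates this reads $\mu'=\lambda'+e_a-e_b$ for some $a<b$, giving $\mu'_a=\lambda'_a+1$, $\mu'_b=\lambda'_b-1$, and in particular $\lambda'_b\ge 2$.

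The heart of the proof is a bijective argument on colored derangements. Define $\mathcal{A}\subseteq\mathcal{D}^\mu_k$ as the set of $(c,\sigma)$ whose cycle through $a$ is contained in $\{1,\dots,a\}$ and has color $\lambda'_a+1$; dually, define $\mathcal{B}\subseteq\mathcal{D}^\lambda_k$ as the set whose cycle through $b$ has largest element $b$ and color $\lambda'_b$. A direct check using the non-increasing property of $\lambda'$ and $\mu'$ shows $\mathcal{D}^\mu_k\setminus\mathcal{A}=\mathcal{D}^\lambda_k\setminus\mathcal{B}$ as sets of colored permutations (these are exactly the derangements simultaneously valid for the color lists of $\mu$ and $\lambda$). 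Hence $d^\lambda_k-d^\mu_k=|\mathcal{B}|-|\mathcal{A}|$, and it suffices to inject $\mathcal{A}$ into $\mathcal{B}$ while preserving the cycle count.

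Finally I would construct $\phi:\mathcal{A}\hookrightarrow\mathcal{B}$ as the swap that interchanges $a$ and $b$ between $C^*$ (the cycle of $\sigma$ through $a$) and $C_b$ (the cycle through $b$, necessarily distinct from $C^*$ since $b>a=\max C^*$), recolors the new cycle containing $b$ with $\lambda'_b$, and lets the new cycle containing $a$ inherit the color of $C_b$. The cycle count is preserved because the operation simply permutes two elements between two existing cycles, and the inverse swap (recoloring with $\lambda'_a+1$) provides injectivity on the image. The main obstacle will be verifying in the corner cases (singleton $C^*$, singleton $C_b$, or $\max C_b>b$) that $\phi(c,\sigma)$ is a valid $\lambda$-derangement; each case reduces to the inequalities $\lambda'_b\le\lambda'_a<\lambda'_a+1$ and $\lambda'_b\ge 2$ (the latter guaranteeing that any newly created singleton $\{b\}$ is not colored $1$).
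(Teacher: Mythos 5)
Your proposal is correct and follows essentially the same route as the paper: reduce to a dominance cover via Proposition~\ref{prop:dom}, reduce $|\eta^\mu_\alpha|\le|\eta^\lambda_\alpha|$ to the coefficient-wise inequality $d^\mu_k\le d^\lambda_k$ via Theorem~\ref{thm:main}, and inject by swapping the two affected symbols between their cycles and recoloring (your swap on $\mathcal{A}$ is exactly the paper's conjugation by $(i~j)$ with the same recoloring, and your identification $\mathcal{D}^\mu_k\setminus\mathcal{A}=\mathcal{D}^\lambda_k\setminus\mathcal{B}$ is just the paper's "identity when $c_i\neq b$" case). The only difference is cosmetic: you work in conjugate coordinates and isolate the symmetric difference explicitly.
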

\noindent Setting $\alpha = 2$ in Corollary~\ref{thm:kwt} answers Godsil and Meagher's question on the Ku--Wales conjecture for the perfect matching derangement graph~\cite[pg. 316]{GodsilMeagher}.
\begin{corollary}\label{thm:minmax}
	For all $\alpha \geq 1$ and $n \geq 6$, we have 
	$$(n) = \argmax_{\lambda \vdash n}~\eta_\alpha^\lambda, \quad (n-1,1) = \argmin_{\lambda \vdash n}~\eta_\alpha^\lambda, \quad \text{ and } \quad (n-1,1) = \argmax_{\substack{\lambda \vdash n \\ \lambda \neq (n)}}~|\eta_\alpha^\lambda|.$$
\end{corollary}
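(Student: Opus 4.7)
By Theorem~\ref{thm:main} and Corollary~\ref{thm:ast}, each eigenvalue factors as $\eta^\lambda_\alpha = (-1)^{|\lambda|-\lambda_1} D^\lambda_\alpha$ with $D^\lambda_\alpha \geq 0$. In particular $\eta^{(n)}_\alpha = D^{(n)}_\alpha \geq 0$ while $\eta^{(n-1,1)}_\alpha = -D^{(n-1,1)}_\alpha \leq 0$, so all three statements of the corollary collapse into the two magnitude inequalities
\begin{enumerate}
\item[(A)] $D^{(n)}_\alpha > D^\lambda_\alpha$ for every $\lambda \neq (n)$,
\item[(B)] $D^{(n-1,1)}_\alpha > D^\lambda_\alpha$ for every $\lambda \notin \{(n),(n-1,1)\}$.
\end{enumerate}
Applying Ku--Wales (Corollary~\ref{thm:kwt}), for each first-part value $m \in \{1,\ldots,n\}$ let $\mu^{(m)}$ denote the unique dominance-maximum partition of $n$ with first part $m$; it then suffices to verify (A) and (B) with $\lambda$ ranging over the finite family $\{\mu^{(m)}\}_m$.

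The pivotal step is to establish the clean identity
\[
  D^{(n)}_\alpha \;=\; (n-1)\,\alpha\, D^{(n-1,1)}_\alpha,
\]
which I would prove via the colored-derangement model of Section~\ref{sec:php}. Every index of $\mathcal{S}_{(n)}$ is forced to color $1$, so $\mathcal{D}^{(n)}$ coincides with the true derangements of $[n]$ and $D^{(n)}_\alpha = \sum_k d_{n,k}\,\alpha^{n-k}$, where $d_{n,k}$ is the unsigned associated Stirling number of the first kind. In $\mathcal{S}_{(n-1,1)}$ only the index $1$ carries two colors; splitting on whether $1$ is a fixed point of the underlying permutation yields $d^{(n-1,1)}_k = d_{n-1,k} + d_{n-2,k-1}$, whence $D^{(n-1,1)}_\alpha = D^{(n-1)}_\alpha + D^{(n-2)}_\alpha$. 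The identity then follows from the classical recursion $d_{n,k} = (n-1)(d_{n-1,k} + d_{n-2,k-1})$ by re-indexing. For $\alpha \geq 1$ and $n \geq 3$, this immediately settles (A) at $\lambda = (n-1,1)$ and reduces the problem to establishing (B) at $\lambda = \mu^{(m)}$ for $m \leq n-2$.

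For the remaining inequalities $D^{(n-1,1)}_\alpha > D^{\mu^{(m)}}_\alpha$ I would split into two regimes. In the two-row regime $m > n/2$ (where $\mu^{(m)} = (m,n-m)$), I would extend the case analysis of the previous paragraph to obtain an exact formula for $D^{(m,n-m)}_\alpha$ as a small linear combination of $D^{(k)}_\alpha$'s and compare it term-by-term with $D^{(n-1)}_\alpha + D^{(n-2)}_\alpha$, using the super-exponential growth of $D^{(k)}_\alpha$ in $k$. In the multi-row regime $m \leq n/2$, the $\alpha$-degree of $D^{\mu^{(m)}}_\alpha$ is at most $m-1 \leq n/2 - 1$, much smaller than the degree $n-2$ of $D^{(n-1,1)}_\alpha$; a crude cardinality bound (for instance via $|\mathcal{D}^{\mu^{(m)}}| \leq |\mathcal{S}_{\mu^{(m)}}| = H^1_*(\mu^{(m)})|_{\alpha=1}$) then suffices to beat the leading-order behaviour of $D^{(n-1,1)}_\alpha$ whenever $\alpha \geq 1$.

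The main obstacle is the two-row regime with $m$ just below $n-1$, particularly $\mu^{(m)} = (n-2,2)$ and $(n-3,3)$: the explicit comparisons are tightest here, and the inequality $D^{(n-1,1)}_\alpha > D^{(n-2,2)}_\alpha$ becomes a genuine combinatorial identity between derangement-like counts of different shapes. The hypothesis $\alpha \geq 1$ is essential since it enables the degree and leading-coefficient arguments used in the multi-row regime, and the hypothesis $n \geq 6$ ensures that these combinatorial regimes are non-degenerate; the finitely many cases $n \leq 5$ can be verified by direct computation.
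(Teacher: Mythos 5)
Your reduction is sound as far as it goes: the sign analysis collapses the corollary to the two magnitude inequalities, the Ku--Wales reduction to the dominance-maximal partition with each fixed first part is valid, and the identity $D^{(n)}_\alpha = (n-1)\alpha\, D^{(n-1,1)}_\alpha$ is correct --- your derivation via $d^{(n-1,1)}_k = d_{n-1,k} + d_{n-2,k-1}$ and the recursion $d_{n,k} = (n-1)(d_{n-1,k}+d_{n-2,k-1})$ is a clean way to get it, and the paper uses exactly this relation (stated without proof as $|\eta^{(n-1,1)}_\alpha| = D^{(n)}_\alpha/(\alpha(n-1))$). But the heart of the corollary --- that $(n-1,1)$ beats \emph{every} other shape in magnitude --- is not proved. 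Your plan for the two-row regime (``obtain an exact formula \ldots{} and compare it term-by-term'') and the multi-row regime (``a crude cardinality bound \ldots{} suffices to beat the leading-order behaviour'') is a sketch, and you yourself flag that the comparison $D^{(n-1,1)}_\alpha > D^{(n-2,2)}_\alpha$ remains ``a genuine combinatorial identity'' to be established. Note also that at $\alpha = 1$ the degree argument gives nothing, and the crude bound $|\mathcal{D}^{\mu}| \le H^1_*(\mu)\big|_{\alpha=1}$ against the single leading term of $D^{(n-1,1)}_\alpha$ is not strict in the tightest multi-row case (e.g.\ $n=6$, $\mu=(3,3)$ gives $24$ versus $(n-2)! = 24$), so even that regime needs more care than you allow.

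The paper closes this gap with two uniform estimates you do not have: Proposition~\ref{prop:upperbound} gives $|\eta^\lambda_\alpha| \le H^1_*(\lambda)$ for every $\lambda$ (from the telescoping chain of minors in Lemma~\ref{prop:h1}), and Proposition~\ref{prop:lb} gives the Poisson-type lower bound $D^{(n)}_\alpha > H^{(n)}_*/3$ for $n \ge 4$, hence $|\eta^{(n-1,1)}_\alpha| > H^{(n)}_*/\bigl(3\alpha(n-1)\bigr)$. This reduces everything to a single comparison of \emph{principal lower hook products}, $H^1_*(\lambda) \le H^1_*((n-1,1))/3$ for $\lambda \ne (n),(n-1,1)$, which is dispatched by an injection of the first-row lower hook lengths of $\lambda$ into those of $(n-1,1)$ (the hypothesis $n \ge 6$ enters to guarantee the uncovered factors contribute at least $3$). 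To repair your proof you would need either to carry out the explicit comparisons you defer --- in particular the $(n-2,2)$ and $(n-3,3)$ cases, uniformly in $\alpha \ge 1$ --- or to import a uniform upper bound on $D^\lambda_\alpha$ and a matching lower bound on $D^{(n-1,1)}_\alpha$ of the kind the paper develops.
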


Finally, we note that colored permutations have appeared before in the study of the character theory of the symmetric group. In~\cite{Stanley06} Stanley conjectures a formula for $\theta^\lambda_1(\mu)$ in terms of colored permutations, which was later proven by F\'eray~\cite{Feray06} and reformulated by F\'eray and \'Sniady~\cite{FerayS11}. The combinatorics involved in their reformulation makes the expression more amenable to asymptotic analysis, leading to sharper results on the asymptotic character theory of $S_n$~\cite{FerayS11}. Moreover, Lasselle~\cite{Lassalle07} conjectures that an analogue of the Stanley--F\'eray formula holds for the Jack characters. Although these works all feature colored permutatons, the main results center around their factorizations and asymptotics, which we do not consider. It is also not clear how to use the Stanley--F\'eray--\'Sniady formula to recover our main results for $\alpha = 1$. At any rate, these works and the present show that colored permutations have an understated role in the character theory of the symmetric group that seems worthy of future investigation.

\subsection*{Organization}

The paper is organized as follows. In Section~\ref{sec:prelim} we overview basic terminology and definitions in the theory of symmetric functions. We introduce the shifted Jack polynomials in Section~\ref{sec:jack} and show that the Jack derangement sums can be written as an alternating sum of shifted Jack polynomials (Theorem \ref{thm:jack}). The expression we obtain is difficult to work with, so in Section~\ref{sec:php} we cover some combinatorial results of~\cite{AlexanderssonF17,FilmusL22a} that lead to a simpler combinatorial formulation of the expression (Corollary~\ref{cor:eigs}). This combinatorial expression is used to prove the main result for $\alpha = 0$ in Section~\ref{sec:0}, but it is still not explicit enough to obtain closed-form expressions for $\alpha \neq 0$. In Section~\ref{sec:minors} we prove a few technical lemmas about so-called \emph{minors} of principal lower hook products, which leads us to a more explicit formulation of a result of Alexandersson and F\'eray~\cite[Theorem 5.12]{AlexanderssonF17} in the language of finite differences. With these lemmas in hand, we prove our first main result (Theorem~\ref{thm:main}) in Section~\ref{sec:main}. 

The remainder focuses on various corollaries and specializations of $\alpha$ and $\lambda$. 
In Section~\ref{sec:cor}, we give short proofs for Corollary~\ref{thm:ast}, Corollary~\ref{thm:kwt}, and Corollary~\ref{thm:minmax}.  In Sections~\ref{sec:eigs} and \ref{sec:eigs2} we take a closer look at the $\alpha = 1,2$ case and prove our second main result, namely, closed tableau-theoretic expressions for the eigenvalues of the derangement graphs (Theorem~\ref{thm:eigen1} and Theorem~\ref{thm:eigen2}). For $\alpha=1$, this extends a result of Okazaki~\cite[Corollary 1.3]{OkazakiPhD} and Stanley~\cite[Ex.~7.63b]{StanleyV201} for hooks to arbitrary shapes, which can be reformulated as a result on immanants of the matrix $J-I$ (see Section~\ref{sec:eigs}). For $\alpha = 1$, we connect our closed form to Renteln's determinantal formula~\cite[Theorem 4.2]{Renteln07} for the eigenvalues of the permutation derangement graph.
We conclude with some open questions and directions for future work.

\section{Preliminaries}\label{sec:prelim}
The reader familiar with the theory of symmetric functions may skip this section, as our notation is completely standard, following Macdonald~\cite{Macdonald95} and Stanley~\cite{Stanley89}.

A \emph{shape} is a collection of cells $(i,j)$ such that $i,j \in \mathbb{N}_+$. We let $\lambda$ be an integer partition and we refer to it as a shape when we are appealing to its tableau interpretation. Let $\lambda / \mu$ denote the \emph{skew shape} obtained by deleting the cells of $\lambda$ that correspond to the partition $\mu$. Let $|\lambda|$ denote the \emph{size} of $\lambda$, i.e., the number of cells of $\lambda$. Let $\ell(\lambda)$ denote the \emph{length} of $\lambda$, i.e., the number of parts of $\lambda$. A \emph{Young tableau} $t$ of shape $\lambda$ is a tableau whose cells are labeled with the integers $[|\lambda|]$.  For any Young tableau $t$ let $t_{i,j}$ denote the entry of the $(i,j)$ cell of $t$. A Young tableau with entries strictly increasing along rows and columns is called \emph{standard}. Let $z_\lambda := 1^{m_1}2^{m_2} \cdots m_1! m_2! \cdots $ where $m_i$ is the number of parts of $\lambda$ equal to $i$. 

The \emph{elementary symmetric functions} are defined such that
\[
	e_\lambda := e_{\lambda_1} \cdots e_{\lambda_{\ell(\lambda)}} \quad \text{ where } \quad e_k(x_1,x_2,\cdots) = \sum_{i_1 < i_2 < \cdots < i_k} x_{i_1} \cdots x_{i_k}.
\]
The \emph{power sum symmetric functions} are defined such that
\[
	p_\lambda := p_{\lambda_1} \cdots p_{\lambda_{\ell(\lambda)}} \quad \text{ where } \quad p_k(x_1,x_2,\cdots) = \sum_{i=1} x_i^k.
\]
For a more detailed discussion of these polynomials, we refer the reader to~\cite[Ch.~I]{Macdonald95}.

We now review a well-known tableau-theoretic definition of the dominance ordering $\trianglelefteq$ on partitions. A cell $\Box \in \lambda$ is an \emph{outer corner} if the diagram obtained by removing $\Box$ is a partition of $|\lambda|-1$. A non-cell $(i,j)$ is an \emph{inner corner} if the diagram obtained by adding the cell $\Box := (i,j)$ to $\lambda$ is a partition of $|\lambda|+1$. For example, in the figure below, the outer corners are labeled ``$+$" and the inner corners are labeled ``-":
\[
\ytableausetup{smalltableaux}
\begin{ytableau}
~ & ~ & ~ & ~ & ~ & ~ & ~ & ~ & ~ & + & \none[\text{~~-~}]\\
~ & ~  &~ &~ & ~ & + & \none[\text{-}] \\ 
~ & ~ &+& \none[\text{-}]  \\
+ & \none[\text{-}]\\
\none[\text{-}]
\end{ytableau}.
\ytableausetup{nosmalltableaux}
\]
We write $\mu \nearrow \nu$ if $\nu$ can be obtained from $\mu$ by removing a single outer corner $\Box \in \mu$ and placing it on an inner corner of $\mu$ that lies in a row above $\Box$ in $\mu$. The following proposition is well-known.
\begin{proposition}\label{prop:dom}
	For any partitions $\mu,\lambda \vdash n$, we have $\mu \trianglelefteq \lambda$ if and only if there exists a sequence of partitions $\nu^1, \nu^2, \ldots, \nu^k$ such that 
	\[
	\mu = \nu^1 \nearrow \nu^2 \nearrow \cdots  \nearrow \nu^k = \lambda.
	\] 
\end{proposition}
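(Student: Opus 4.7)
The backward direction is immediate: a single $\nearrow$-move that transfers a cell from row $r$ to row $s$ with $s < r$ raises each partial sum $\sum_{i \leq k} \mu_i$ by exactly one for $s \leq k < r$ and leaves the rest unchanged, so $\mu \trianglelefteq \nu$ after the move, and transitivity gives $\mu \trianglelefteq \lambda$ for any chain.

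For the forward direction, I plan to induct on the non-negative integer potential $\Phi(\mu,\lambda) := \sum_i i(\mu_i - \lambda_i)$, which (via summation by parts) equals $\sum_k f(k)$, where $f(k) := \sum_{i \leq k}(\lambda_i - \mu_i) \geq 0$ by dominance. This potential vanishes only when $\mu = \lambda$, so the inductive step reduces to producing, whenever $\mu \neq \lambda$, a single move $\mu \nearrow \nu$ with $\nu \trianglelefteq \lambda$; any such move lowers $\Phi$ by the shift length $r - s > 0$, and induction then carries $\nu$ to $\lambda$. To build the move, I would let $r$ be the largest index with $\mu_r > \lambda_r$ (which exists since $|\mu| = |\lambda|$ and $\mu \neq \lambda$); the chain $\mu_{r+1} \leq \lambda_{r+1} \leq \lambda_r < \mu_r$ then shows $(r, \mu_r)$ is an outer corner of $\mu$. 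Because $f$ is non-decreasing beyond $r$ with $f(\infty) = 0$ and $f \geq 0$, we have $f(r) = 0$, whence $f(r - 1) = \mu_r - \lambda_r \geq 1$. I would then let $s$ be the smallest index with $f(k) \geq 1$ throughout $s \leq k \leq r - 1$; this is well-defined with $s \leq r - 1$. Moving the chosen cell from $(r, \mu_r)$ to $(s, \mu_s + 1)$ yields a partition $\nu$ whose partial sums exceed those of $\mu$ by one exactly on $[s, r-1]$, and $\nu \trianglelefteq \lambda$ follows from the condition $f(k) \geq 1$ there.

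The main subtlety will be verifying that $(s, \mu_s + 1)$ is genuinely an inner corner of $\mu$, i.e., that $s = 1$ or $\mu_{s - 1} > \mu_s$. The case $s = 1$ is automatic. For $s > 1$, the minimality of $s$ forces $f(s - 1) = 0$ while $f(s) \geq 1$, giving $\lambda_s > \mu_s$; combining $f(s - 1) = 0$ with $f(s - 2) \geq 0$ then yields $\mu_{s-1} \geq \lambda_{s-1} \geq \lambda_s > \mu_s$, so the inner corner exists and the construction goes through.
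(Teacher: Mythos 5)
Your proof is correct and complete. The paper itself offers no argument here --- it simply records Proposition~\ref{prop:dom} as well-known --- so there is nothing to compare against; your potential-function induction (produce a single cell move $\mu \nearrow \nu$ with $\nu \trianglelefteq \lambda$, strictly decreasing $\Phi(\mu,\lambda)=\sum_i i(\mu_i-\lambda_i)$) is the standard proof of this classical fact, and your verification of the two delicate points --- that $(r,\mu_r)$ is an outer corner and that $(s,\mu_s+1)$ is an inner corner via $f(s-1)=0$, $f(s)\geq 1$ --- is exactly what is needed.
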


\section{Shifted Jack Polynomials}\label{sec:jack}

We briefly review some of the standard terminology associated with Jack polynomials defined in the introduction. For any cell $(i,j) \in \lambda$, the \emph{leg length} $l_\lambda(i,j)$ of $(i,j)$ is the number of cells below $(i,j)$ in the same column of $\lambda$, and the \emph{arm length} $a_\lambda(i,j)$ of $(i,j)$ is the number of cells to the right of $(i,j)$ in the same row of $\lambda$, i.e., 
$$a_\lambda(i,j) = |\{ (i,k) \in \lambda : k > j\}| \quad \text{ and } \quad l_\lambda(i,j) = |\{ (k,j) \in \lambda : k > i\}|.$$ 
Note that arm length and leg length remain well-defined even when $\lambda$ is replaced by a set of cells that does not form an integer partition. Let
\[
	h^\lambda_*(i,j) := \alpha a_\lambda(i,j) + l_\lambda(i,j) + 1 \quad \text{ and } \quad h_\lambda^*(i,j) := \alpha (a_\lambda(i,j) + 1) + l_\lambda(i,j)
\]
be the \emph{lower hook length} and \emph{upper hook length} of $(i,j) \in \lambda$, respectively. Let
\[
	H_*^\lambda = \prod_{(i,j) \in \lambda} h^\lambda_*(i,j)  \quad \text{ and } \quad 
 H^*_\lambda = \prod_{(i,j) \in \lambda} h_\lambda^*(i,j)
\]
be the \emph{lower hook product} and \emph{upper hook product} of $\lambda$, respectively. Note that the lower and upper hook product remain well-defined even when $\lambda$ is replaced by a set of cells that does not form an integer partition.

A \emph{reverse semistandard Young tableau} is a tableau on $n$ cells with entries in $[n]$ that are weakly decreasing along rows and strictly decreasing down columns. Let $\mathrm{RSSYT}(\mu)$ be the set of all reverse semistandard Young tableau of shape $\mu$. For any reverse semistandard Young tableau $t$ of shape $\mu$, define  
$$\psi_t(\alpha) := \prod_{i=1}^{|\mu|} \psi_{\rho^i / \rho^{i-1}}(\alpha); \quad \psi_{\lambda/\mu}(\alpha) := \!\!\!\! \prod_{(i,j) \in R_{\lambda/\mu} \setminus C_{\lambda/\mu} } \!  \frac{
h^\mu_*(i,j)h_\lambda^*(i,j)}{h_\mu^*(i,j) h^\lambda_*(i,j)}
$$
where $\rho^i / \rho^{i-1}$ defines the skew shape in $t$ induced by the cells labeled $i$, and $R_{\lambda/\mu}$ ($C_{\lambda/\mu}$) denotes the set of boxes in a row (column) that intersects the shape $\lambda / \mu$. 

It is well-known that the \emph{(integral form) shifted Jack polynomials} are a basis for the ring of \emph{shifted symmetric functions}, i.e., the ring of polyomials symmetric in the variables $x_i - i/\alpha$, and are defined as follows:
$$J^\star_\lambda(x;\alpha)  :=  H^\lambda_* P^\star_\lambda(x;\alpha); \quad P^\star_\mu(x;\alpha) := \!\!\!\!\!\! \sum_{t \in \mathrm{RSSYT}(\mu)} \!\!\!\! \psi_t(\alpha) \prod_{(i,j) \in \mu} (x_{t_{i,j}} -  \bar{a}_\mu(i,j) + \bar{l}_\mu(i,j)/\alpha)$$
where $\bar{a}_\mu(i,j) = |\{ (i,k) \in \lambda : k < j\}|$ and $\bar{l}_\mu(i,j) = |\{ (k,j) \in \lambda : k < i\}|$ denote the \emph{co-arm length} and \emph{co-leg length} of $(i,j) \in \mu$, respectively. The polynomials $P^\star_\lambda$ are sometimes referred to as the \emph{normalized shifted Jack polynomials}.
 
Theorem~\ref{thm:jack} is a simple but opaque expression for $\eta^\lambda_\alpha$ in terms of shifted Jack polynomials.
These expressions are already known for $\eta^\lambda_1$ and $\eta^\lambda_2$ in terms of the determinantal formula for the shifted Schur polynomials~\cite{Renteln07} and recently for the shifted Zonal polynomials~\cite{Renteln21}. Theorem~\ref{thm:jack} is simply the Jack analogue of these results. Henceforth, we let $J_k^\star := J_{(k)}^\star$.

\begin{theorem}\label{thm:jack} For all $\lambda$ and $\alpha \in \mathbb{R}$, we have 
$$\eta^\lambda_\alpha =  \sum_{k=0}^{|\lambda|} \frac{(-1)^{|\lambda|-k}}{k!}   J_k^\star(\lambda).$$
\end{theorem}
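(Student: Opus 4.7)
\noindent The plan is to prove the identity by expanding $J^\star_k(\lambda)/k!$ in terms of the Jack characters and then collapsing via an inclusion-exclusion argument. First, I would rewrite the derangement sum as a linear functional applied to $J_\lambda$: setting $L$ to be the functional on symmetric functions defined on the power-sum basis by $L(p_\mu) := [m_1(\mu) = 0]$, the definition of $\eta^\lambda_\alpha$ combined with the power-sum expansion $J_\lambda = \sum_\mu \theta^\lambda_\alpha(\mu) p_\mu$ immediately gives $\eta^\lambda_\alpha = L(J_\lambda)$. This puts the LHS in a form amenable to comparison with any other expansion of $J_\lambda$ in the power-sum basis.

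\medskip

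\noindent Second, using the characterization of shifted Jack polynomials (via Knop--Sahi interpolation, or through a direct analysis of the RSSYT-based expansion of $P^\star_{(k)}$ given in Section~\ref{sec:jack}), one obtains an expansion
\[
\frac{J^\star_k(\lambda)}{k!} = \sum_{\mu \vdash n} c(k, \mu)\, \theta^\lambda_\alpha(\mu), \qquad n := |\lambda|,
\]
where the coefficients $c(k, \mu)$ depend only on $k$, $\mu$, and $n$. The key is then to verify the alternating-sum identity
\[
\sum_{k=0}^n (-1)^{n-k} c(k, \mu) \;=\; [m_1(\mu) = 0].
\]
Granted this, a simple interchange of summation yields
\[
\sum_{k=0}^n \frac{(-1)^{n-k}}{k!} J^\star_k(\lambda) \;=\; \sum_{\mu \vdash n} \theta^\lambda_\alpha(\mu) \sum_{k=0}^n (-1)^{n-k} c(k,\mu) \;=\; \sum_{\mu \,:\, m_1(\mu) = 0} \theta^\lambda_\alpha(\mu) \;=\; \eta^\lambda_\alpha.
\]

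\medskip

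\noindent The main technical obstacle is establishing the alternating-sum identity for the coefficients $c(k,\mu)$. For $\alpha = 1$ they can be read off the Okounkov--Olshanski formula for shifted Schur polynomials, reducing the identity to the classical inclusion-exclusion on fixed points of permutations; for $\alpha = 2$ an analogous argument via shifted Zonal polynomials was carried out by Renteln~\cite{Renteln21}. For general $\alpha$, the absence of such determinantal tools means one must either analyze the tableau-based definition of $J^\star_k$ directly---exploiting the special structure of one-row shapes---or invoke the Knop--Sahi vanishing characterization abstractly and track coefficients. It is worth emphasizing that a naive, term-wise comparison of the alternating $J^\star_k$-sum with the obvious inclusion-exclusion over the number of singleton parts of $\mu$ does \emph{not} succeed: the two expressions agree only after full summation, so the proof must exploit a more subtle cancellation that reflects the duality between $J_\lambda$ and $J^\star_\lambda$.
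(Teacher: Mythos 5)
Your outline is structurally sensible --- writing $\eta^\lambda_\alpha = L(J_\lambda)$ for the functional $L(p_\mu) = [m_1(\mu)=0]$ is correct, and it is true that the restrictions of shifted Jack polynomials to partitions of $n$ lie in the span of the functions $\lambda \mapsto \theta^\lambda_\alpha(\mu)$, so an expansion $J^\star_k(\lambda)/k! = \sum_{\mu \vdash n} c(k,\mu)\,\theta^\lambda_\alpha(\mu)$ exists. But the proposal never produces the coefficients $c(k,\mu)$ and never proves the alternating-sum identity $\sum_{k}(-1)^{n-k}c(k,\mu) = [m_1(\mu)=0]$; you explicitly flag this as ``the main technical obstacle'' and then list three possible ways one \emph{might} attack it without carrying any of them out. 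Since that identity is equivalent to the theorem itself (by the very interchange of summation you perform), the proposal defers the entire mathematical content rather than proving it. The suggested fallback routes are also not adequate as stated: the determinantal formulas exist only for $\alpha = 1, 2$, and invoking the Knop--Sahi vanishing characterization ``abstractly'' does not by itself yield the coefficients $c(k,\mu)$ for general $\alpha$.

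For comparison, the paper closes this gap by an entirely different, generating-function route that never needs the individual $c(k,\mu)$. One starts from the Cauchy identity $\sum_\lambda J_\lambda(x)J_\lambda(y)/(H^*_\lambda H_*^\lambda) = \prod_{i\ge 1}\exp(p_i(x)p_i(y)/\alpha i)$ and specializes $p_1(x)=0$, $p_i(x)=1$ for $i \ge 2$, which implements your functional $L$ on the left and produces the factor $e^{-p_1/\alpha}\prod_i(1-x_i)^{-1/\alpha} = e^{-J_1/\alpha}\sum_k J_k/(\alpha^k k!)$ on the right --- the exponential $e^{-J_1/\alpha}$ is precisely the inclusion--exclusion over singleton parts, performed once and for all at the level of generating functions. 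Expanding, applying the Pieri rule to $J_1^j J_k$, and converting the resulting skew multiplicities $d^{\lambda/(k)}$ into $P^\star_k(\lambda)$ via Okounkov--Olshanski's Proposition 5.2 yields the claimed alternating sum of $J^\star_k(\lambda)$ directly. If you want to salvage your coefficient-matching plan, you would need to extract $c(k,\mu)$ explicitly (e.g.\ from the Pieri/Okounkov--Olshanski machinery just described), at which point you have essentially reproduced the paper's argument in dual form.
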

\begin{proof}
We begin with the Cauchy formula for Jack polynomials~\cite{Stanley89} and its expansion:
\[
	\sum_\lambda \frac{J_\lambda(x;\alpha) J_\lambda(y;\alpha)}{H^*_\lambda H_*^\lambda} = \prod_{i,j}(1 - x_iy_j)^{1/\alpha} = \prod_{i \geq 1} \exp \left( \frac{p_i(x)p_i(y)}{\alpha i} \right).
\]
Since $\eta^\lambda_\alpha = J_\lambda |_{p_1 = 0 , p_2 = p_3 = \cdots = 1}$, setting $p_1(x) = 0$ and the remaining $p_i(x) = 1$ gives
\[
	\sum_\lambda  \frac{ \eta^{\lambda}_{\alpha} J_\lambda(y;\alpha)}{H^*_\lambda H_*^\lambda} = \prod_{i \geq 2} \exp \left( \frac{p_i(y)}{\alpha i} \right).
\]
Recall the basic fact that the generating function $H(t)$ of the homogeneous complete symmetric polynomials $h_i$ can be written as follows:
\[
	H(t) = \sum_{i = 0} h_i t^i =  \prod_{i} \frac{1}{1-x_it}  = \prod_{i \geq 1} \exp \left( \frac{p_i}{i}t^i \right).
\]
This implies that 
\[
	\sum_\lambda  \frac{ \eta^{\lambda}_{\alpha} J_\lambda}{H^*_\lambda H_*^\lambda} = \prod_{i \geq 2} \exp \left( \frac{p_i}{\alpha i} \right)	= e^{-h_1/\alpha } \prod_i (1-x_i)^{-1/\alpha}.
\]
Following Stanley~\cite{Stanley89}, we have $\sum_k J_{k}/(\alpha^kk!) =  \prod_i (1-x_i)^{-1/\alpha}$. This, along with the fact that $h_1 = J_1$ gives us
\[
		\sum_\lambda  \frac{ \eta^{\lambda}_{\alpha} J_\lambda}{H^*_\lambda H_*^\lambda} = e^{-J_1/\alpha } \sum_{k} \frac{J_k}{\alpha^k k! } = \sum_{j,k} \frac{(-1)^j}{\alpha^jj! \alpha^kk!} J_1^j J_k.
\]
The Pieri rule for Jack polynomials~\cite{Stanley89} implies that
\[
 	J_1^j J_k =  \sum_{\lambda = (k) + j} d^{\lambda/k}  \left( \frac{H^{(k)}_*}{H^\lambda_*} \right) J_\lambda.
\]
where the sum ranges over all shapes $\lambda$ obtained by adding $j$ inner corners to $(k)$ in succession. Equating coefficients and then reindexing gives
\[
	\frac{ \eta^{\lambda}_{\alpha}}{H^*_\lambda H_*^\lambda} = \sum_{\lambda = (k) + j} \frac{(-1)^j}{\alpha^jj! \alpha^kk!}  	  \left( \frac{H^{(k)}_*}{H^\lambda_*} \right) d^{\lambda/k} = \sum_{k=0}^{|\lambda|} \frac{(-1)^{|\lambda|-k}}{\alpha^{|\lambda|-k} (|\lambda|-k)! \alpha^kk!}    \left( \frac{H^{(k)}_*}{H^\lambda_*} \right) d^{\lambda/k}.
\]
Let $d^\lambda = |\lambda|!/H_\lambda^*$. By~\cite[Proposition 5.2]{OkounkovO97}, we have
\[
	d^{\lambda / \mu} = \frac{d^\lambda P_\mu^\star(\lambda)}{|\lambda| (|\lambda| -1) \cdots (|\lambda|-|\mu|+1) }.
\]
Multiplying both sides by $H^*_\lambda H_*^\lambda$ and applying \cite[Proposition 5.2]{OkounkovO97} gives us
\begin{align*}
 \eta^{\lambda}_{\alpha} &= \sum_{k=0}^{|\lambda|} \frac{(-1)^{|\lambda|-k} H^*_\lambda H^{(k)}_*}{\alpha^{|\lambda|-k} (|\lambda|-k)! \alpha^kk!}   \left( \frac{d^{\lambda} P_k^\star(\lambda)}{ |\lambda| (|\lambda| -1) \cdots (|\lambda|-k+1)} \right)\\
 &=  \sum_{k=0}^{|\lambda|} \frac{(-1)^{|\lambda|-k} \alpha^{|\lambda|} |\lambda|! H^{(k)}_*}{\alpha^{|\lambda|-k} (|\lambda|-k)! \alpha^kk!}  \left(  \frac{P_k^\star(\lambda)}{ |\lambda| (|\lambda| -1) \cdots (|\lambda|-k+1)} \right)\\
 &=  \sum_{k=0}^{|\lambda|}\frac{(-1)^{|\lambda|-k}}{k!} H_*^{(k)}    P_k^\star(\lambda).
\end{align*}
By definition, we have $ H_*^{(k)}  P_k^\star(\lambda) = J_k^\star(\lambda)$, which completes the proof.
\end{proof}

\section{Tableau Transversals and Principal Hook Products}\label{sec:php}\label{sec:0}

We now leverage some combinatorial results of~\cite{AlexanderssonF17,FilmusL22a} to give a more tractable combinatorial formulation of Theorem~\ref{thm:jack}, which we use to prove Theorem~\ref{thm:main} for $\alpha = 0,1,2$.

A \emph{$k$-transversal} $T$ of $\lambda$ is a set of $k$ cells of $T$ which forms a partial transversal of the columns of $\lambda$, that is, no two cells of $T$ lie in the same column of $\lambda$. Define the \emph{$\alpha$-weight} of a $k$-transversal $T$ to be the lower hook product of $T$, i.e., $w_\alpha (T) = H^T_*$, with the convention that $w_\alpha(\emptyset) = 1$ (see Figure~\ref{fig:transversals} for examples). Let $\mathcal{T}_\lambda^k$ be the collection of $k$-transversals of $\lambda$. 

In~\cite[Theorem 5.12]{AlexanderssonF17}, Alexandersson and F\'eray show that
\begin{align}\label{eq:af}
	\frac{J^\star_{k}(\lambda)}{k!} = \sum_{T \in \mathcal{T}_\lambda^k} w_\alpha(T).
\end{align}
Independently, Filmus and Lindzey~\cite{FilmusL22a} prove the following combinatorial identity
\begin{align}\label{eq:fl}
\frac{J_{\lambda_1}^\star(\lambda)}{\lambda_1!} = \sum_{T \in \mathcal{T}_\lambda^{\lambda_1}} w_\alpha(T)  = H^1_*(\lambda).
\end{align}
\noindent For $\alpha = 1$, we note that Equation~(\ref{eq:fl}) can also be observed from Naruse's hook-length formula for standard skew-tableaux~\cite{Naruse14}. We write $\mu \preceq_k \lambda$ if $\mu$ is a subshape $\lambda$ obtained by removing $k$ columns of $\lambda$. There are $\binom{\lambda_1}{k}$ such subshapes, and we let the sigma notation $\sum_{\mu \preceq_k \lambda}$ denote the sum over all $\binom{\lambda_1}{k}$ subshapes $\mu$ of $\lambda$ obtained by removing $k$ columns.

\begin{figure}
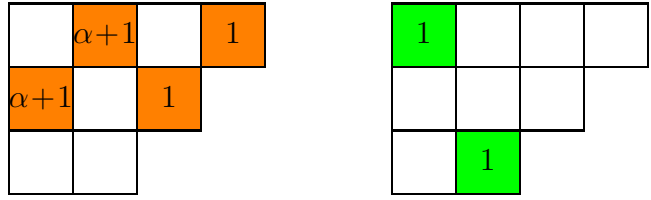

\ytableausetup{mathmode, boxsize=2em}
\[ 
	\begin{ytableau}
	 ~ & *(orange)\alpha\!+\!1  & ~ & *(orange)1 \\ 
	*(orange)\alpha\!+\!1 &  ~ & *(orange)1  \\
	~ & ~ 
	\end{ytableau}
 \quad \quad \quad \quad 
	\begin{ytableau}
	*(green) 1 & ~  & ~ & ~  \\ 
	~ &  ~ & ~  \\
	~ & *(green) 1 
	\end{ytableau}
\]
\caption{Let $\mu = (4,3,2) \vdash 9$. The colored cells $S = \{(2,1), (1,2),(2,3),(1,4)\}$ on the left is a $4$-transversal of $\mu$ with $\alpha$-weight $w_\alpha(S) = (\alpha + 1)^2$. The colored cells $S' = \{(1,1),(3,2)\}$ on the right is a $2$-transversal of $\mu$ with $\alpha$-weight $w_\alpha(S') = 1$. Each colored cell is labeled with its lower hook length with respect to $S$ and $S'$. }\label{fig:transversals}
\end{figure}

\begin{theorem}\label{thm:php}\label{cor:eigs}  For any shape $\lambda$ and $\alpha \in \mathbb{R}$, we have 
\[
	\eta^\lambda_\alpha = (-1)^{|\lambda|-\lambda_1}\sum_{k=0}^{\lambda_1} (-1)^{k} \sum_{ \mu \preceq_{k} \lambda } H^1_*(\mu).
\]
\end{theorem}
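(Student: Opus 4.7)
The plan is to derive the identity directly from Theorem~\ref{thm:jack} by rewriting each $J^\star_k(\lambda)/k!$ via the Alexandersson--F\'eray identity~(\ref{eq:af}) and then regrouping the resulting transversal sums via the Filmus--Lindzey identity~(\ref{eq:fl}). Starting from Theorem~\ref{thm:jack} and applying~(\ref{eq:af}) term-by-term, we get
\[
\eta^\lambda_\alpha \;=\; \sum_{k=0}^{|\lambda|}\frac{(-1)^{|\lambda|-k}}{k!}J^\star_k(\lambda) \;=\; \sum_{k=0}^{|\lambda|}(-1)^{|\lambda|-k}\sum_{T\in\mathcal{T}^k_\lambda}w_\alpha(T).
\]
The outer sum immediately truncates at $k=\lambda_1$ because $\mathcal{T}^k_\lambda=\emptyset$ whenever $k>\lambda_1$ (a $k$-transversal requires $k$ distinct columns, giving the vanishing of $J^\star_k(\lambda)$ for $k>\lambda_1$ essentially for free).

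The combinatorial heart of the argument is a change in the order of summation. Each $k$-transversal $T$ of $\lambda$ occupies exactly $k$ distinct columns; left-justifying these columns produces a partition $\mu_T$ with $(\mu_T)_1=k$, and $T$ is naturally a $(\mu_T)_1$-transversal of $\mu_T$ using \emph{all} of its columns. Crucially, $w_\alpha(T)=H^T_*$ depends only on the intrinsic row-structure of $T$ (which cells share a row, which share a column), so the weight is invariant under viewing $T$ inside $\lambda$ versus inside $\mu_T$. Packaging these observations yields a weight-preserving bijection
\[
\mathcal{T}^k_\lambda \;\longleftrightarrow\; \bigsqcup_{\mu\preceq_{\lambda_1-k}\lambda}\mathcal{T}^{\mu_1}_\mu,
\]
since a $k$-transversal of $\lambda$ is precisely a choice of which $\lambda_1-k$ columns to delete together with a full-column transversal of the resulting subshape (note $\mu_1=k$ for every $\mu\preceq_{\lambda_1-k}\lambda$).

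Applying~(\ref{eq:fl}) to each subshape produces $\sum_{T\in\mathcal{T}^k_\lambda}w_\alpha(T)=\sum_{\mu\preceq_{\lambda_1-k}\lambda}H^1_*(\mu)$, whence
\[
\eta^\lambda_\alpha \;=\; \sum_{k=0}^{\lambda_1}(-1)^{|\lambda|-k}\sum_{\mu\preceq_{\lambda_1-k}\lambda}H^1_*(\mu),
\]
and reindexing with $j=\lambda_1-k$ and pulling out the overall sign $(-1)^{|\lambda|-\lambda_1}$ produces the claimed formula. The only point demanding any real care is verifying that $w_\alpha$ is genuinely intrinsic to $T$ so that the regrouping is legal; this is immediate from the fact that the leg lengths within a transversal vanish and the arm lengths count only cells of $T$ itself. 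No other obstacle arises: the proof is essentially an algebraic shuffle of~(\ref{eq:af}) and~(\ref{eq:fl}) arranged by column-support.
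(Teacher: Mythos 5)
Your proposal is correct and follows essentially the same route as the paper: truncate Theorem~\ref{thm:jack} at $k=\lambda_1$, convert each $J^\star_k(\lambda)/k!$ to a transversal sum via~(\ref{eq:af}), regroup the $k$-transversals by column support and collapse each group with~(\ref{eq:fl}), then reindex. Your explicit check that $w_\alpha(T)$ is intrinsic to $T$ (legs vanish, arms count only cells of $T$) is a point the paper leaves implicit, but the argument is the same.
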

\begin{proof}
We claim that 
\[
	\frac{J^\star_{k}(\lambda)}{k!} = \sum_{T \in \mathcal{T}_\lambda^k} w_\alpha(T) =  \sum_{ \mu \preceq_{\lambda_1-k} \lambda } H^1_*(\mu).
\]
The first equality follows from Equation~(\ref{eq:af}), and the second equality follows from applying Equation~(\ref{eq:fl}) to each shape $\mu$ obtained by removing $\lambda_1 - k$ columns from $\lambda$, so that $\mu_1 = k$. Note for all $\ell > \lambda_1$ that there exists no $\mu$ such that $\mu \preceq_\ell \lambda$. Reindexing Theorem~\ref{thm:jack} and applying the identity above gives
\[
\eta^\lambda_\alpha =  \sum_{k=0}^{|\lambda|} \frac{(-1)^{|\lambda|-k}}{k!}   J_k^\star(\lambda)\\
	=  \sum_{k=0}^{\lambda_1} \frac{(-1)^{|\lambda|-k}}{k!}   J_k^\star(\lambda)\\
	= (-1)^{|\lambda|-\lambda_1}\sum_{k=0}^{\lambda_1} (-1)^{k} \sum_{ \mu \preceq_{k} \lambda } H^1_*(\mu),
\]
as desired.
\end{proof}
\noindent We are now ready to give an elementary combinatorial proof of Theorem~\ref{thm:main} for $\alpha = 1,2$ using the Principle of Inclusion-Exclusion. This is due to the fact that $\lambda$-colored permutations $\mathcal{S}_\lambda$ (defined in Section~\ref{sec:intro}) and \emph{$\lambda$-colored perfect matchings} $\mathcal{M}_\lambda$ (defined below) are \emph{bona fide} combinatorial objects, and their cardinalities are counted by principal lower hook products.

\begin{theorem}\emph{\cite{FilmusL22a}} For any shape $\lambda$, we have
$$|\mathcal{S}_\lambda|, |\mathcal{M}_\lambda| = H_*^1(\lambda)$$
for $\alpha = 1,2$, respectively.
\end{theorem}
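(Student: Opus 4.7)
The plan is to prove both equalities by an insertion-style count that reproduces the product structure of $H^1_*(\lambda)$ one factor at a time. Writing out the first-row lower hook lengths explicitly, $h^\lambda_*(1,j) = \alpha(\lambda_1-j) + \lambda'_j$, so the target is $\prod_{j=1}^{\lambda_1}(\alpha(\lambda_1-j)+\lambda'_j)$. I would construct a generic element of $\mathcal{S}_\lambda$ (resp.\ $\mathcal{M}_\lambda$) by inserting the symbols $j = \lambda_1, \lambda_1{-}1, \ldots, 1$ in decreasing order and show that step $j$ offers exactly $h^\lambda_*(1,j)$ legal extensions.

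For $|\mathcal{S}_\lambda|$ (the $\alpha=1$ case), when symbol $j$ is added to the colored permutation already built on $\{j{+}1,\ldots,\lambda_1\}$, the extensions split into two disjoint types. Either $j$ opens a brand-new singleton cycle, in which case any color in $[\lambda'_j]$ is legal, contributing $\lambda'_j$ choices; or $j$ is spliced into an existing cycle just before one of the $\lambda_1-j$ already-placed symbols, contributing $\lambda_1 - j$ positional choices and inheriting the host cycle's color. The key observation is that the list constraint $c_j \in [\lambda'_j]$ is automatically satisfied in the splicing case: the host cycle carried a color $c \in [\lambda'_M]$ where $M$ is its current maximum, and since $j < M$ and $\lambda'$ is weakly decreasing, $[\lambda'_M] \subseteq [\lambda'_j]$. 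Thus step $j$ contributes the factor $\lambda_1 - j + \lambda'_j = h^\lambda_*(1,j)|_{\alpha=1}$, and multiplying over $j$ yields the claim.

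For $|\mathcal{M}_\lambda|$ (the $\alpha=2$ case), I would deploy the same skeleton once the deferred definition from Section~\ref{sec:php} is in hand. The only expected arithmetic change is a doubling in the splicing contribution: because a new index in a perfect matching is partnered by choosing a "slot" and each previously placed symbol presents two such slots (conceptually, the two endpoints of its current edge), the splicing step now offers $2(\lambda_1-j)$ options rather than $\lambda_1-j$, while a newly opened edge still contributes $\lambda'_j$ color choices. Step $j$ then contributes $2(\lambda_1-j) + \lambda'_j = h^\lambda_*(1,j)|_{\alpha=2}$, and the product formula follows.

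The main obstacle is the perfect-matching analysis, which is where the formal definition of $\mathcal{M}_\lambda$ enters nontrivially: one has to verify that the insertion operation is actually a bijection with the correct branching number $2(\lambda_1-j)$ and that the color-list constraint is again automatic under splicing by the same monotonicity of $\lambda'$. By contrast, the permutation case is essentially elementary cycle-insertion bookkeeping and presents no real difficulty.
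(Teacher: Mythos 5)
Your argument is correct, and it is worth noting that the paper itself contains no proof of this statement: it is imported wholesale from \cite{FilmusL22a}, where it arises as a byproduct of evaluating a family of differential operators on polynomial spaces attached to permutations and perfect matchings. Your insertion proof is the natural elementary alternative and is sound. For $\alpha=1$ it is exactly the classical cycle-insertion count behind $n!=\prod_{j}(n-j+1)$, decorated with colors, and your observation that the weak decrease of $\lambda'$ makes the list constraint $c_j\in[\lambda'_j]$ automatic under splicing is precisely the point that turns the count into the product $\prod_j(\lambda_1-j+\lambda'_j)=H^1_*(\lambda)|_{\alpha=1}$. For $\alpha=2$ your branching number $2(\lambda_1-j)+\lambda'_j$ is right, but the justification should be phrased per edge rather than per symbol: the new pair $\{2j-1,2j\}$ either forms its own $m$-edge ($\lambda'_j$ color choices) or is spliced into one of the $\lambda_1-j$ existing $m$-edges in one of two orientations, replacing $\{a,b\}$ by $\{a,2j-1\},\{2j,b\}$ or by $\{a,2j\},\{2j-1,b\}$; the phrase ``each previously placed symbol presents two slots'' would double-count to $4(\lambda_1-j)$. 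One must also read the paper's color constraint as assigning a single color to each pair $(2k-1,2k)$ (the tuple $c$ has only $\lambda_1$ components), so that every component of $m\cup m^*$ --- not merely every $m$-edge --- is monochromatic; under the per-symbol reading the count for $\lambda=(2,2)$ would be $12$ rather than the required $8=H^1_*((2,2))|_{\alpha=2}$. With these two clarifications the deletion map (remove $2j-1,2j$ and rejoin their partners) is visibly inverse to insertion, the inherited color again lies in $[\lambda'_j]$ by monotonicity of $\lambda'$, and the product formula follows.
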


For each $i \in [2n] := \{1,2,\ldots,n\}$, we assign a list of colors $L(i)$ such that $L(i) = L(i+1)$ for all odd $i$. A \emph{colored perfect matching} $(c,m)$ is an assignment of colors $c = c_1,c_2,\ldots, c_n$ such that $c_i \in L(i)$ and a perfect matching $m \in \mathcal{M}_{2n}$ such that $m(i) = j \Rightarrow c_i = c_j$, where $m(i)$ denotes the partner of $i$ in the perfect matching $m$. Any partition $\lambda$ defines a color list on each element $i$ of the symbol set $[2\lambda_1]$ by setting $L(i) = L(i+1) = [\lambda'_i]$. Let $\mathcal{M}_\lambda$ to be the set of all such colored perfect matchings, formally, 
\[
	\mathcal{M}_\lambda := \{ (c \in [\lambda'_1] \times \cdots \times [\lambda'_{\lambda_1}], m \in \mathcal{M}_{2\lambda_1}) : m(i) = j \Rightarrow c_i = c_j \text{ for all } i \in [2\lambda_1]\}.
\]
We say that a colored perfect matching $(c,m) \in \mathcal{M}_\lambda$ is a \emph{derangement} if $m(i) = i+1 \Rightarrow c_i \neq 1$ for all odd $1 \leq i < 2\lambda_1$. These are the colored perfect matchings that have no edges in common with $(1,\ldots,1, \{\{1,2\},\ldots,\{2\lambda_1-1,2\lambda_1\} \}) \in \mathcal{M}_\lambda$. Let $\mathcal{D}_\lambda'$ be the set of derangements of $\mathcal{M}_\lambda$.

\begin{proof}[Proof of Theorem~\ref{thm:main} for $\alpha = 1,2$]
Let $\alpha = 1$. A \emph{fixed point} of a colored permutation is a symbol $i$ such that $c(i) = 1$ and $\sigma(i) = i$. Consider the summation $\sum_{ \mu \preceq_{k} \lambda } H^1_*(\mu)$ which ranges over each shape $\mu$ obtained by removing $k$ columns from $\lambda$. For each $\mu$ in this summation, the indices $I \subseteq [\lambda_1]$ of the $k$ columns removed from $\lambda$ to obtain $\mu$ correspond to $k$ fixed points of a $\lambda$-colored permutation, and the number of colored permutations on the remaining columns is counted by $H^1_*(\mu)$. Thus it counts the number of $\lambda$-colored permutations that have each $i \in I$ as a fixed point. This overcounts the number of $\lambda$-colored permutations for which $I$ is the exactly the set of fixed points, so we must exclude those $\lambda$-colored permutations for which $I$ is a proper subset of its set of fixed points. Thus by the Principle of Inclusion-Exclusion, the alternating sum in Theorem~\ref{thm:php} for $\alpha = 1$ counts the number of $\lambda$-colored permutations with exactly 0 fixed points, as desired.

The proof for $\alpha = 2$ is identical \emph{mutatis mutandis} and shows $\eta_2^\lambda = (-1)^{|\lambda|-\lambda_1} |\mathcal{D}_\lambda'| = (-1)^{|\lambda|-\lambda_1} D^\lambda_2$, where the last equality is a combinatorial exercise left to the reader.
\end{proof}
\medskip

\noindent In Section~\ref{sec:main} we give a generalization of the proof above to all $\alpha \in \mathbb{R}$, but along the way we collect several results concerning principal lower hook products, perhaps of independent interest, that allow us to give a closed-form expression of Theorem~\ref{thm:main}. Moreover, the specialization to $\alpha = 1,2$ leads to nice expressions for the eigenvalues of the derangement graphs. The reader uninterested in such closed-form expressions may skip to Section~\ref{sec:main}.

For didactical reasons, we conclude this section with a proof of the first main result for $\alpha = 0$, as it is simple and  provides some insight into the general $\alpha \in \mathbb{R}$ case.  
\begin{theorem}\label{thm:0}
	For all $\lambda$, we have
	\[
		\eta^\lambda_0 = (-1)^{|\lambda|-\lambda_1} \prod_{i=1}^{\ell(\lambda')} (\lambda_i'-1).
	\]
\end{theorem}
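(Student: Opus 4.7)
The plan is to apply Corollary~\ref{cor:eigs} (Theorem~\ref{thm:php}) at $\alpha = 0$ and then recognize the resulting alternating sum as the expansion of a product of linear factors.

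First, I would specialize the lower hook length at $\alpha = 0$: since $h^\lambda_*(i,j) = \alpha a_\lambda(i,j) + l_\lambda(i,j) + 1$, we have $h^\lambda_*(i,j)|_{\alpha=0} = l_\lambda(i,j)+1$. For the principal lower hook product of any shape $\mu$ obtained by deleting columns from $\lambda$, the leg length of $(1,j)$ equals $\mu'_j - 1$, so
\[
H^1_*(\mu)\big|_{\alpha=0} = \prod_{j=1}^{\mu_1}(l_\mu(1,j)+1) = \prod_{j \in J(\mu)} \lambda'_j,
\]
where $J(\mu) \subseteq [\lambda_1]$ denotes the set of columns of $\lambda$ that survive in $\mu$. (Here I use that a column of $\lambda$ appears unchanged when it is not deleted.)

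Second, I would sum over $\mu \preceq_k \lambda$. Since choosing $\mu \preceq_k \lambda$ is the same as choosing the $\lambda_1 - k$ surviving columns $J \subseteq [\lambda_1]$,
\[
\sum_{\mu \preceq_k \lambda} H^1_*(\mu)\big|_{\alpha = 0} = \sum_{\substack{J \subseteq [\lambda_1] \\ |J| = \lambda_1 - k}} \prod_{j \in J} \lambda'_j = e_{\lambda_1 - k}(\lambda'_1,\ldots,\lambda'_{\lambda_1}).
\]

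Third, plugging into Corollary~\ref{cor:eigs} and reindexing by $j = \lambda_1 - k$ gives
\[
\eta^\lambda_0 = (-1)^{|\lambda|-\lambda_1}\sum_{k=0}^{\lambda_1}(-1)^k e_{\lambda_1-k}(\lambda'_1,\ldots,\lambda'_{\lambda_1}) = (-1)^{|\lambda|-\lambda_1}\sum_{j=0}^{\lambda_1}(-1)^{\lambda_1-j} e_j(\lambda'_1,\ldots,\lambda'_{\lambda_1}).
\]
Finally, the standard identity $\prod_{i=1}^n(x_i + t) = \sum_{j=0}^n e_j(x_1,\ldots,x_n)\,t^{n-j}$ evaluated at $t = -1$ with $x_i = \lambda'_i$ rewrites the inner sum as $\prod_{i=1}^{\lambda_1}(\lambda'_i - 1)$, which (since $\lambda_1 = \ell(\lambda')$) is exactly the claimed formula. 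There is essentially no technical obstacle here; the only point requiring a moment of care is checking that the principal lower hook product for a non-partition shape (a subshape of $\lambda$ obtained by deleting columns) is still given by the product of the retained column heights, which is immediate from the definition.
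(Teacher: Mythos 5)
Your proof is correct and follows essentially the same route as the paper: the paper also reduces $\eta^\lambda_0$ to the alternating sum $\sum_k (-1)^k e_k(\lambda'_1,\ldots,\lambda'_{\lambda_1})$ (via Theorem~\ref{thm:jack} and the $k$-transversal count $J^\star_k(\lambda)/k!\,|_{\alpha=0}=|\mathcal{T}^k_\lambda|=e_k(\lambda')$, which is the same quantity you extract from Corollary~\ref{cor:eigs}) and then applies Vieta's formula, i.e.\ your generating-function identity for the $e_j$'s, at $t=-1$.
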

\begin{proof}
	Let $x = x_1,\ldots, x_n$ be the roots of a polynomial $p(z)$. Recall \emph{Vieta's formula} 
	\[
		p(z) = \prod_{i=1}^n (z-x_i) = \sum_{k=0}^n (-1)^{k} e_{k}(x_1,\cdots,x_n)z^{n-k}. 
	\]
	By Theorem~\ref{thm:jack}, we have  
	\[
		\eta^\lambda_0 =  \sum_{k=0}^{|\lambda|} (-1)^{|\lambda|-k}   J_k^\star(\lambda)/k!. 
	\]
	For $\alpha = 0$, Theorem~\ref{thm:php} implies that $J_k^\star(\lambda)/k! = |\mathcal{T}_\lambda^k|$, which gives us
	\[
		\eta^\lambda_0 =  \sum_{k=0}^{|\lambda|} (-1)^{|\lambda|-k}   J_k^\star(\lambda)/k! = (-1)^{|\lambda|} \sum_{k=0}^{|\lambda|} (-1)^{k} |\mathcal{T}_\lambda^k| = (-1)^{|\lambda|} \sum_{k=0}^{|\lambda|} (-1)^{k} e_{k}(\lambda').
	\]
	Setting $z = 1$ and $x = \lambda'$ in Vieta's formula gives us
	\[
		\eta^\lambda_0 = (-1)^{|\lambda|} \prod_{i=1}^{\lambda_1} (1-\lambda_i') = (-1)^{|\lambda|-\lambda_1} \prod_{i=1}^{\ell(\lambda')} (\lambda_i'-1),
	\]
	as desired.
\end{proof}
\noindent To see that this proves Theorem~\ref{thm:main} for $\alpha = 0$, first note that the effect of setting $\alpha = 0$ is that the arm lengths of cells in $T \in \mathcal{T}^k_\lambda$ are ignored, thus we associate the identity permutation $()$ to each $T$. Let 
\[
 \mathcal{D}^\lambda_0 = \{ (c \in [\lambda'_1] \times \cdots \times [\lambda'_{m}], ()) : c(i) \neq 1 \text{ for all } i \} \subseteq \mathcal{D}^\lambda
\]
be the derangements that move no symbols of $[\lambda_1]$. Clearly, $|\mathcal{D}^\lambda_0| = \prod_{i=1}^{\ell(\lambda')} (\lambda_i'-1)$, as desired. Evidently, we may define the Jack derangements at $\alpha = 0$ to be the words of $[\lambda'_1] \times \cdots \times [\lambda'_{m}]$ that avoid the symbol 1. 
For $\alpha \neq 0$ we must take into account the arm lengths of the cells, which requires a more detailed examination of the principal lower hook product.

\section{Minors of the Principal Hook Product}\label{sec:minors}

In this section we prove a few technical lemmas concerning the principal hook product that are needed for closed-form expressions of Theorem~\ref{thm:main}. 
Let $\lambda^{-i}$ be the shape obtained by removing the $i$th column of $\lambda$. Let $\lambda^{-i_1 -i_2-\cdots-i_k}$ be the shape obtained by removing (distinct) columns $i_1,i_2,\ldots,i_k$ of $\lambda$. It is useful to think of the $H^1_*(\lambda^{-i})$'s as the \emph{first minors} of $\lambda$, and the $H^1_*(\lambda^{-i_1-\cdots-i_k})$'s as \emph{k-minors} of $\lambda$. The ordering of the $i_j$'s is immaterial, i.e.,
$$\lambda^{-i_1 -i_2-\cdots-i_k} = \lambda^{-i_{\sigma(1)} -i_{\sigma(2)}-\cdots-i_{\sigma(k)}} \quad  \text{ for all $\sigma \in S_k$}
.$$ 
Let $\lambda^{\underline{k}}$ be the shape obtained by removing the last $k$ columns of $\lambda$. We adopt the shorthand $h_j := h^\lambda_*(1,j)$ henceforth.
Lemma~\ref{prop:h1} gives a Laplace-like expansion that relates the principal lower hook product to its first minors.
\begin{lemma}[Laplace Expansion]\label{prop:h1} For all $\lambda$, we have 
\[
	\sum_{i = 1}^{\lambda_1} H^1_*(\lambda^{-i}) = \frac{1}{\alpha} \left( H^1_*(\lambda) + (\alpha - h_{\lambda_1}) 		H^1_*(\lambda^{\underline{1}}) \right), \text{ equivalently,}
\]
\[
	H^1_*(\lambda)  = \sum_{i = 1}^{\lambda_1-1} \alpha H^1_*(\lambda^{-i}) + h_{\lambda_1} 		H^1_*(\lambda^{-\lambda_1}).
\]
\end{lemma}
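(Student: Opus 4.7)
The plan is to first establish an explicit product formula for $H^1_*(\lambda^{-i})$ in terms of the top-row hook lengths $h_j := h^\lambda_*(1,j)$ of $\lambda$, and then recognize the claimed identity as a one-line telescoping sum.

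Writing $h_j = \alpha(\lambda_1 - j) + \lambda'_j$, the first step is to check how the top-row hook lengths transform when column $i$ is deleted. The columns of $\lambda^{-i}$ are the columns of $\lambda$ with column $i$ excised and the rest reindexed (this is still a partition because the transpose remains weakly decreasing). For a top-row cell of $\lambda^{-i}$ at new position $j'$, the leg length is inherited from column $j'$ of $\lambda$ if $j' < i$ and from column $j'+1$ if $j' \geq i$, while in both cases the arm length is $\lambda_1 - 1 - j'$. A short calculation then gives the factored expression
\[
    H^1_*(\lambda^{-i}) \;=\; \prod_{j=1}^{i-1}(h_j - \alpha) \,\cdot\, \prod_{j=i+1}^{\lambda_1} h_j,
\]
where empty products are interpreted as $1$. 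In particular, setting $i = \lambda_1$ recovers $H^1_*(\lambda^{\underline{1}}) = \prod_{j=1}^{\lambda_1 - 1}(h_j - \alpha)$.

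With this factored form in hand, define the partial products $T_i := \prod_{j \leq i}(h_j - \alpha)\prod_{j > i} h_j$ for $0 \leq i \leq \lambda_1$, so that $T_0 = H^1_*(\lambda)$ and $T_{\lambda_1} = (h_{\lambda_1} - \alpha)\,H^1_*(\lambda^{\underline{1}})$. The key observation is
\[
    T_{i-1} - T_i \;=\; \bigl(h_i - (h_i - \alpha)\bigr)\prod_{j < i}(h_j - \alpha)\prod_{j > i} h_j \;=\; \alpha\, H^1_*(\lambda^{-i}),
\]
which telescopes upon summing over $i = 1, \ldots, \lambda_1$:
\[
    \alpha \sum_{i=1}^{\lambda_1} H^1_*(\lambda^{-i}) \;=\; T_0 - T_{\lambda_1} \;=\; H^1_*(\lambda) - (h_{\lambda_1} - \alpha)\, H^1_*(\lambda^{\underline{1}}).
\]
Dividing by $\alpha$ gives the first form of the lemma, and isolating the $i = \lambda_1$ term (noting $\lambda^{-\lambda_1} = \lambda^{\underline{1}}$) rearranges this into $H^1_*(\lambda) = h_{\lambda_1} H^1_*(\lambda^{\underline{1}}) + \alpha \sum_{i=1}^{\lambda_1 - 1} H^1_*(\lambda^{-i})$, the equivalent form.

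The main obstacle is really just the careful bookkeeping of arm and leg lengths under column deletion that produces the factored form for $H^1_*(\lambda^{-i})$; once that formula is in place, the telescoping identity above is immediate, and no further combinatorial input is needed.
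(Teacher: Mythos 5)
Your proof is correct and follows essentially the same route as the paper: both rest on the factorization $H^1_*(\lambda^{-i}) = \prod_{j<i}(h_j-\alpha)\prod_{j>i}h_j$ of the first minors in terms of the top-row lower hook lengths, followed by a summation over $i$ that collapses to the two boundary terms. Your telescoping with the partial products $T_i$ is a cleaner packaging of the algebra that the paper carries out by dividing through by $H^1_*(\lambda)$ and rearranging.
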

\begin{proof}
Let $h_j' := h_j - \alpha$. We can write any first minor of $\lambda$ in terms of hook products of $\lambda$:
$$ H^1_*(\lambda^{-i}) = \prod_{j < i} h'_j  \prod_{j > i} h_j,$$
which implies that
\begin{align*}
\sum_{i = 1}^{\lambda_1} \frac{H^1_*(\lambda^{-i})}{H^1_*(\lambda)}  = \sum_{i = 1}^{\lambda_1} \frac{1}{h_i} \prod_{j < i} \frac{h'_j }{h_j} = \frac{1}{h_1} + \cdots +   \left[ \prod_{j=1}^{\lambda_1-2} \frac{h_j'}{h_j} \right] \frac{1}{h_{\lambda_1-1}} + \left[ \prod_{j=1}^{\lambda_1-2} \frac{h_j'}{h_j}  \right] \frac{h'_{\lambda_1-1}}{h_{\lambda_1-1}} \frac{1}{h_{\lambda_1}}.
\end{align*}
~\\
Note that this sum telescopes to 1 if and only if $\alpha = 1$ and $h_{\lambda_1} = 1$. In general, we have 
\begin{align*}
\sum_{i = 1}^{\lambda_1} \frac{H^1_*(\lambda^{-i})}{H^1_*(\lambda)}  
	&= \frac{1}{h_1} + \cdots +   \left[ \prod_{j=1}^{\lambda_1-2} \frac{h_j'}{h_j} \right] \frac{1}{h_{\lambda_1-1}} + \left[ \prod_{j=1}^{\lambda_1-1} \frac{h_j'}{h_j}  \right] \frac{1}{h_{\lambda_1}}\\
	&= 1 - \frac{(h_{\lambda_1}-1)}{h_{\lambda_1}}\prod_{j=1}^{\lambda_1 - 1}\frac{h'_j}{h_j}  -   (\alpha - 1)\sum_{i=1}^{\lambda_1 - 1} \frac{h_1'}{h_1} \cdots \frac{h'_{\lambda_1 - i - 1}}{h_{\lambda_1 - i - 1}} \cdot \frac{1}{h_{\lambda_1 - i }}.
\end{align*}
Multiplying both sides by $H^1_*(\lambda)$ gives 
\begin{align}\label{eq:minors}
\sum_{i = 1}^{\lambda_1} H^1_*(\lambda^{-i}) = H^1_*(\lambda) - (h_{\lambda_1}-1) H^1_*(\lambda^{-\lambda_1})  -   (\alpha - 1)\sum_{i=1}^{\lambda_1 - 1} H^1_*(\lambda^{-i}).
\end{align}
After rearranging terms and noting that $\lambda^{-\lambda_1} = \lambda^{\underline{ 1}}$, we have
\[
\sum_{i = 1}^{\lambda_1} H^1_*(\lambda^{-i}) = \frac{1}{\alpha}(H^1_*(\lambda) + (\alpha - h_{\lambda_1}) H^1_*(\lambda^{\underline{1}})),
\]
as desired. Rearranging once more finishes the proof.
\end{proof}

For $\alpha \geq 1$, we are now in a position to give a short proof of both the Alternating Sign Theorem and a useful upper bound on the magnitudes of the Jack derangement sums. 
\begin{proposition}\label{prop:upperbound} For all $\alpha \geq 1$, we have $\emph{sgn}~\eta_\alpha^\lambda = (-1)^{|\lambda|-\lambda_1}$. Moreover, $|\eta_\alpha^\lambda| \leq H^1_*(\lambda)$.
\end{proposition}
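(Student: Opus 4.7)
The plan is to prove both claims simultaneously by induction on $m := \lambda_1$, starting from the combinatorial identity of Theorem~\ref{thm:php}. Setting $c_i := \lambda'_i$ and
\[
f(c_1, \ldots, c_m) := \sum_{I \subseteq [m]}(-1)^{m-|I|} H^1_*(\lambda_I),
\]
the relation $\eta^\lambda_\alpha = (-1)^{|\lambda|-\lambda_1} f(c_1, \ldots, c_m)$ reduces both the sign statement and the magnitude bound to the single inequality $0 \le f(c_1, \ldots, c_m) \le H^1_*(\lambda)$.

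The algebraic engine is a Laplace-type recursion for $f$, obtained by splitting the sum according to whether $m \in I$. From the product formula $H^1_*(\lambda_I) = \prod_{j=1}^{|I|}(\alpha(|I|-j) + c_{i_j})$ one checks that appending $m$ to $I' \subseteq [m-1]$ contributes a factor $c_m$ (at position $|I|$) while shifting every earlier arm length by one, so
\[
H^1_*(\lambda_{I' \cup \{m\}}) = c_m \cdot H^1_*(\lambda_{I'})|_{c_i \mapsto c_i + \alpha}.
\]
The same bookkeeping yields the companion identity $c_m\, H^1_*(c_1+\alpha, \ldots, c_{m-1}+\alpha) = H^1_*(c_1, \ldots, c_m)$. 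Summing over $I$ produces the recursion
\[
f(c_1, \ldots, c_m) = c_m\, f(c_1+\alpha, \ldots, c_{m-1}+\alpha) - f(c_1, \ldots, c_{m-1}).
\]

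To make the induction close, I would strengthen the hypothesis to include a shift-monotonicity condition: for all $m \ge 0$, all $c_1, \ldots, c_m \ge 1$, and all $\alpha \ge 1$, both (i) $0 \le f \le H^1_*$ and (ii) $f(c_1, \ldots, c_m) \le f(c_1+\alpha, \ldots, c_m+\alpha)$ hold. The base cases $m = 0, 1$ are immediate. For the inductive step, write $G_\ell := f(c_1+\ell\alpha, \ldots, c_{m-1}+\ell\alpha)$, so that $f(c_1, \ldots, c_m) = c_m G_1 - G_0$ and $f(c_1+\alpha, \ldots, c_m+\alpha) = (c_m+\alpha) G_2 - G_1$. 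The hypothesis applied to $(c_1,\ldots,c_{m-1})$ and its shifts supplies $0 \le G_0 \le G_1 \le G_2$ together with the matching upper bounds $G_\ell \le H^1_*(c_1+\ell\alpha, \ldots, c_{m-1}+\ell\alpha)$.

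Both halves of (i) then fall out: $f(c_1, \ldots, c_m) \ge (c_m - 1) G_0 \ge 0$ using $c_m \ge 1$ and $G_1 \ge G_0$, and the companion identity yields $H^1_*(\lambda) - f(c_1, \ldots, c_m) = c_m[H^1_*(c_1+\alpha, \ldots, c_{m-1}+\alpha) - G_1] + G_0 \ge 0$. For (ii), a short rearrangement gives
\[
f(c_1+\alpha, \ldots, c_m+\alpha) - f(c_1, \ldots, c_m) = (c_m+\alpha)(G_2 - G_1) + (\alpha - 1) G_1 + G_0,
\]
and every summand is nonnegative exactly when $\alpha \ge 1$. The main obstacle lies in this last step: the hypothesis $\alpha \ge 1$ is forced by the factor $(\alpha - 1)$ in the middle term, and extending the argument to $\alpha \in [0,1)$ requires the more delicate polynomial analysis of subsequent sections.
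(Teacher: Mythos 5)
Your proof is correct, but it takes a genuinely different route from the paper's. The paper derives the bound from Lemma~\ref{prop:h1}: Equation~(\ref{eq:minors}) shows (for $\alpha, h_{\lambda_1}\geq 1$) that each level sum $S_k:=\sum_{\mu\preceq_k\lambda}H^1_*(\mu)$ dominates the next, giving the decreasing chain~(\ref{eq:chain}), and then both claims follow by pairing consecutive terms of the alternating sum in Theorem~\ref{thm:php}. You instead run an induction on the number of columns applied to the \emph{full} alternating sum $f$, using the recursion $f(c_1,\ldots,c_m)=c_m f(c_1+\alpha,\ldots,c_{m-1}+\alpha)-f(c_1,\ldots,c_{m-1})$ and, crucially, a strengthened hypothesis (shift-monotonicity) to make the induction close; I verified the recursion, the identity $H^1_*(\lambda_{I'\cup\{m\}})=c_m\,H^1_*(\lambda_{I'})|_{c_i\mapsto c_i+\alpha}$, and each step of the inductive argument, and the factor $(\alpha-1)G_1$ correctly isolates where $\alpha\geq 1$ is used, matching the paper's hypothesis. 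Your recursion is essentially an inclusion--exclusion shadow of the same Laplace expansion, and it closely parallels the recursion for the coefficients $c^\lambda_k(\alpha)$ in Proposition~\ref{prop:count}; in effect you give a purely algebraic proof that the quantity later identified as $D^\lambda_\alpha$ is nonnegative, without invoking either Lemma~\ref{prop:h1} or the combinatorial interpretation. The paper's route is shorter given that Lemma~\ref{prop:h1} is already in hand and yields the reusable chain~(\ref{eq:chain}); yours is self-contained modulo Theorem~\ref{thm:php} and makes the role of $\alpha\geq 1$ completely explicit. One shared (minor) caveat: like the paper, you establish only the weak inequality $f\geq 0$, so strictly speaking the sign statement should read $(-1)^{|\lambda|-\lambda_1}\eta^\lambda_\alpha\geq 0$ unless one separately rules out $\eta^\lambda_\alpha=0$.
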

\begin{proof}
Since $\alpha, h_{\lambda_1} \geq 1$, applying Equation~(\ref{eq:minors}) repeatedly shows that  
\begin{align}\label{eq:chain}
\sum_{\mu \preceq_{\lambda_1} \lambda} H_1^*(\mu)  \leq \cdots  \leq \sum_{\mu \preceq_1 \lambda} H_1^*(\mu)  \leq  H_1^*(\lambda). 
\end{align}
If $|\lambda|-\lambda_1$ is even, then by Corollary~\ref{cor:eigs} we have
$$ 0 \leq  H_1^*(\lambda) - \sum_{\mu \preceq_1 \lambda} H_1^*(\mu) \leq \eta_\alpha^\lambda; \quad \text{ otherwise, } \quad 
 0 \geq  -H_1^*(\lambda) + \sum_{\mu \preceq_1 \lambda} H_1^*(\mu) \geq \eta_\alpha^\lambda,$$
i.e., $\text{sgn}~\eta_\alpha^\lambda = (-1)^{|\lambda|-\lambda_1}$. That $|\eta_\alpha^\lambda| \leq H^1_*(\lambda)$ follows from Equation~(\ref{eq:chain}) and Theorem~\ref{thm:php}.
\end{proof}

For any $\lambda$ and integer $0 \leq j \leq \lambda_1-1$, let
\[
	f^*_\lambda(j) :=  \prod_{i=0}^j ((j+1)\alpha - h_{\lambda_1-i}),
\]
and define $f^*_\lambda(j) := 1$ for all negative integers $j$. For the proof of the next lemma, it will be useful to define the following related quantity: 
\[
	f^*_\lambda(j,i) :=  \prod_{l=0}^{j-i} ((j+1)\alpha - h_{\lambda_1-l}) \prod_{l=j-i+1}^j ((j+2)\alpha - h_{\lambda_1-l-1}).
\]
In other words, $f^*_\lambda(j,i)$ is the function obtained by both incrementing the $\alpha$-coefficient by 1 and decrementing the hook index by 1 in the last $i$ factors $f^*_\lambda(j)$. Lemma~\ref{lem:split} is a generalization of Lemma~\ref{prop:h1} that will lead to a more explicit version of~\cite[Theorem 5.12]{AlexanderssonF17}.
\begin{lemma}\label{lem:split} For all shapes $\lambda$ and $0 \leq j \leq \lambda_1-1$, we have
\[
	\sum_{i = 1}^{\lambda_1} f^*_{\lambda^{-i}}(j-1)~H^1_*( (\lambda^{-i})^{\underline{j}}) = \frac{1}{\alpha}\left(f^*_{\lambda}(j-1)~H^1_* (\lambda^{\underline{j}}) +  f^*_{\lambda}(j) ~H^1_*(\lambda^{\underline{j+1}})\right).\\
\]
\end{lemma}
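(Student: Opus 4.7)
}

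The plan is to split the sum on the left-hand side according to whether the removed column $i$ lies among the first $\lambda_1-j$ columns of $\lambda$ or among the last $j$, reduce the first group to an application of Lemma~\ref{prop:h1}, and absorb the second group via a telescoping identity. The essential observation is that removing a column of $\lambda$ affects the first-row hook lengths of the other columns in a completely local way: for column $k$ of $\lambda^{-i}$ one checks directly from the definition $h_*(1,k)=\alpha\,a(1,k)+\lambda'_k-1+1$ that $h^{\lambda^{-i}}_*(1,k)=h_k-\alpha$ if $k<i$ and $h^{\lambda^{-i}}_*(1,k)=h_{k+1}$ if $k\ge i$. Combined with the identity $h_{\lambda_1-j}=j\alpha+\lambda'_{\lambda_1-j}$, this gives the hook-length bookkeeping we need.

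First I would treat Case~1, $1\le i\le \lambda_1-j$. In this range the last $j$ columns of $\lambda^{-i}$ coincide with the last $j$ columns of $\lambda$ (with hook lengths unchanged), so $f^*_{\lambda^{-i}}(j-1)=f^*_\lambda(j-1)$, and moreover $(\lambda^{-i})^{\underline j}=(\lambda^{\underline j})^{-i}$. Thus
\[
\sum_{i=1}^{\lambda_1-j} f^*_{\lambda^{-i}}(j-1)\,H^1_*((\lambda^{-i})^{\underline j})
= f^*_\lambda(j-1)\sum_{i=1}^{\lambda_1-j} H^1_*((\lambda^{\underline j})^{-i}).
\]
Applying Lemma~\ref{prop:h1} to the shape $\lambda^{\underline j}$ (whose first row has length $\lambda_1-j$ and whose last-column hook length equals $\lambda'_{\lambda_1-j}=h_{\lambda_1-j}-j\alpha$) converts the inner sum into
$\tfrac{1}{\alpha}\bigl(H^1_*(\lambda^{\underline j})+((j+1)\alpha-h_{\lambda_1-j})H^1_*(\lambda^{\underline{j+1}})\bigr)$.

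Next I would handle Case~2, $\lambda_1-j+1\le i\le \lambda_1$. Here the removed column is one of the last $j$ columns, so $(\lambda^{-i})^{\underline j}=\lambda^{\underline{j+1}}$ does not depend on $i$. Writing $i=\lambda_1-s$ with $s\in\{0,\dots,j-1\}$ and using the hook-length rule above, one computes
\[
f^*_{\lambda^{-i}}(j-1)=\prod_{l=0}^{s-1}(j\alpha-h_{\lambda_1-l})\prod_{l=s+1}^{j}\bigl((j+1)\alpha-h_{\lambda_1-l}\bigr).
\]
Setting $a_l:=j\alpha-h_{\lambda_1-l}$ and $b_l:=(j+1)\alpha-h_{\lambda_1-l}$ (so $b_l-a_l=\alpha$), the combined contribution of both cases to the coefficient of $H^1_*(\lambda^{\underline{j+1}})$ equals
\[
\tfrac{1}{\alpha}\Bigl(b_j\prod_{l=0}^{j-1}a_l+\alpha\sum_{s=0}^{j-1}\prod_{l=0}^{s-1}a_l\prod_{l=s+1}^{j}b_l\Bigr),
\]
and the identity $\alpha=b_s-a_s$ makes the sum telescope: writing $T_s:=\prod_{l<s}a_l\prod_{l\ge s}^{\,j}b_l$ the summand becomes $T_s-T_{s+1}$, so the whole sum equals $T_0-T_j=\prod_{l=0}^{j}b_l-b_j\prod_{l=0}^{j-1}a_l$. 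The stray term cancels and leaves $\tfrac{1}{\alpha}\prod_{l=0}^{j}b_l=\tfrac{1}{\alpha}f^*_\lambda(j)$, which is the desired coefficient. The coefficient of $H^1_*(\lambda^{\underline j})$ from Case~1 is already $\tfrac{1}{\alpha}f^*_\lambda(j-1)$, giving the claimed identity.

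The main obstacle is the first step of careful hook-length bookkeeping: verifying that removing an early column shifts hook lengths by $-\alpha$ only in columns to its left while leaving columns to its right intact, and translating $h^{\lambda^{\underline j}}_*(1,\lambda_1-j)$ into $h_{\lambda_1-j}-j\alpha$. Once this is in hand the computation is purely algebraic, and the telescoping is forced by $b_l-a_l=\alpha$; the case $j=0$ (with the convention $f^*_\lambda(-1)=1$) recovers Lemma~\ref{prop:h1} as a consistency check.
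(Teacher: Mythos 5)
Your proposal is correct and follows essentially the same route as the paper: the same split of the sum at $i=\lambda_1-j$, the same reduction of the first group to Lemma~\ref{prop:h1} applied to $\lambda^{\underline{j}}$ via the identity $h_{(\lambda^{\underline{j}})_1}=h_{\lambda_1-j}-j\alpha$, and the same algebraic identity for the coefficient of $H^1_*(\lambda^{\underline{j+1}})$. Your explicit telescoping with $T_s-T_{s+1}=\alpha\prod_{l<s}a_l\prod_{l>s}b_l$ is just a cleaner packaging of the paper's iterated factoring of $((j+1)\alpha-h_{\lambda_1-s})$, so nothing is missing.
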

\begin{proof}
We begin by listing a few combinatorial facts that are easily verified.
\begin{enumerate}
\item For all $j$, we have $h_{(\lambda^{-i})_1-j} = h_{\lambda_1-j} \text{ if } i \leq \lambda_1-j; \text{ otherwise, } h_{(\lambda^{-i})_1-j} = h_{\lambda_1 - 1 - j} - \alpha$.
\item For all $i \leq \lambda_1 - j$, we have $(\lambda^{-i})^{\underline{j}} = (\lambda^{\underline{j}})^{-i}$.
\item For all $i > \lambda_1 - j$ we have $(\lambda^{-i})^{\underline{j}} = \lambda^{\underline{ j+1}}$.
\item For all $j$, we have $h_{(\lambda^{\underline{j}})_1} = h_{\lambda_1- j} - j\alpha$.
\end{enumerate}
The first three facts allows us to split the summation as follows:
	\begin{align*}
		&\sum_{i = 1}^{\lambda_1} f^*_{\lambda^{-i}}(j-1) ~ H^1_*( (\lambda^{-i})^{\underline{j}}) = f^*_{\lambda}(j-1) \sum_{i = 1}^{\lambda_1-j}  H^1_*( (\lambda^{\underline{j}})^{-i}) + \sum_{i=1}^j f^*_{\lambda}(j-1,i) ~H^1_*(\lambda^{\underline{ j+1}}).
		\intertext{By Lemma~\ref{prop:h1} and the last fact, we have}
		&= \frac{1}{\alpha} \left(f^*_{\lambda}(j-1) \left(H^1_*(\lambda^{\underline{j}}) + (\alpha - h_{\lambda^{\underline{j}}_1}) H^1_*(\lambda^{\underline{ j+1}})\right)  + \alpha \sum_{i=1}^j f^*_{\lambda}(j-1,i) H^1_*(\lambda^{\underline{ j+1}})\right)\\
		&= \frac{1}{\alpha} \left(f^*_{\lambda}(j-1) \left(H^1_*(\lambda^{\underline{ j}}) + ((j+1)\alpha - h_{\lambda_1- j}) H^1_*(\lambda^{\underline{ j+1}})\right) + \alpha \sum_{i=1}^j f^*_{\lambda}(j-1,i) H^1_*(\lambda^{\underline{ j+1}})\right)\\
		&= \frac{1}{\alpha}  \left(f^*_{\lambda}(j-1) H^1_*(\lambda^{\underline{ j}}) + \left[  ((j+1)\alpha - h_{\lambda_1- j})f^*_{\lambda}(j-1,0) + \alpha \sum_{i=1}^j f^*_{\lambda}(j-1,i) \right] H^1_*(\lambda^{\underline{ j+1}}) \right).
	\end{align*}
	It suffices to show that the bracketed factor equals $f^*_\lambda(j)$. We may write the summation as
	\begin{align*}
		\sum_{i=1}^j f^*_\lambda(j-1,i) = (j\alpha - h_{\lambda_1}) \cdots (j\alpha - h_{\lambda_1-j+2}) ~&\cdot~ ((j+1)\alpha - h_{\lambda_1-j}) ~ + \\
		&\vdots\\ 
		\\
		(j\alpha - h_{\lambda_1})(j\alpha - h_{\lambda_1-1}) ~ \cdot ~ & ((j+1)\alpha - h_{\lambda_1-3}) ~\cdots ~((j+1)\alpha - h_{\lambda_1-j})~+\\
		(j\alpha - h_{\lambda_1}) ~ \cdot ~ & ((j+1)\alpha - h_{\lambda_1-2}) ~\cdots ~((j+1)\alpha - h_{\lambda_1-j})~+\\
		&((j+1)\alpha - h_{\lambda_1-1}) ~\cdots ~((j+1)\alpha -  h_{\lambda_1-j}).
	\end{align*}
		which we may write as
	\begin{align*}
			\sum_{i=1}^j f^*_\lambda(j-1,i) &= ((j+1)\alpha - h_{\lambda_1-j})~[~ (j\alpha - h_{\lambda_1}) \cdots (j\alpha - h_{\lambda_1-j+2}) \\
			&+ ((j+1)\alpha - h_{\lambda_1-(j-1)})~[~ (j\alpha - h_{\lambda_1}) \cdots (j\alpha - h_{\lambda_1-j+3}) \\
			&+ ((j+1)\alpha - h_{\lambda_1-(j-2)})~[~ (j\alpha - h_{\lambda_1}) \cdots (j\alpha - h_{\lambda_1-j+4}) \\
			&~\vdots\\ 
		&+ ((j+1)\alpha - h_{\lambda_1-1})~] \cdots ].
	\end{align*}
		We may factor out $((j+1)\alpha - h_{\lambda_1-j})$, leaving
	\begin{align*}
		(j\alpha - h_{\lambda_1}) \cdots (j\alpha - h_{\lambda_1-j+2}) &+ ((j+1)\alpha - h_{\lambda_1-j+1})~[~ (j\alpha - h_{\lambda_1}) \cdots (j\alpha - h_{\lambda_1-j+3}) \\
		&+ ((j+1)\alpha - h_{\lambda_1-j+2})~[~ (j\alpha - h_{\lambda_1}) \cdots (j\alpha - h_{\lambda_1-j+4}) \\
		&~\vdots\\ 
		&+ ((j+1)\alpha - h_{\lambda_1-1})~] \cdots ].
	\end{align*}
		We have $$\alpha (j\alpha - h_{\lambda_1}) \cdots (j\alpha - h_{\lambda_1-j+2}) + f_\lambda^*(j-1,0) = (j\alpha - h_{\lambda_1}) \cdots (j\alpha - h_{\lambda_1-j+2})((j+1)\alpha - h_{\lambda_1-j+1}),$$ so we may factor out $((j+1)\alpha - h_{\lambda_1-j+1})$. Continuing in this manner gives us
		\[
		= \frac{1}{\alpha} \left(f^*_{\lambda}(j-1)H^1_*(\lambda^{\underline{ j}}) + f^*_{\lambda}(j) H^1_*(\lambda^{\underline{j+1}}) \right),\quad\quad\quad\quad\quad\quad\quad\quad\quad\quad\quad\quad\quad\quad\quad\quad\quad\quad\quad\quad
		\]
	as desired.
\end{proof}

\noindent Theorem~\ref{thm:k} is a more explicit form for \cite[Theorem 5.12]{AlexanderssonF17}, perhaps of independent interest.

\begin{theorem}\label{thm:k} For all $\lambda$ and $\alpha \in \mathbb{R}$, we have 
	\[
		\frac{J^\star_{\lambda_1 - k}(\lambda)}{(\lambda_1 - k)!} = \sum_{\mu \preceq_k \lambda} H^1_*(\mu) = \frac{1}{\alpha^k} \sum_{j =0}^k (-1)^j \frac{ \prod_{i=1}^{\lambda_1} (h_i - j\alpha)}{{ (k-j)! j!}}, \text{ equivalently,}
	\]
	\[
			\frac{H^*_k}{(\lambda_1 - k)!} J^\star_{\lambda_1 - k}(\lambda) = \sum_{j =0}^k (-1)^j \binom{k}{j} \prod_{i=1}^{\lambda_1} (h_i - j\alpha).
	\]
\end{theorem}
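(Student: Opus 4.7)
The first equality is already recorded inside the proof of Theorem~\ref{thm:php}: starting from Equation~(\ref{eq:af}) with transversal size $\lambda_1 - k$, group the $(\lambda_1-k)$-transversals of $\lambda$ by the set of $k$ unused columns, which yields a subshape $\mu \preceq_k \lambda$ carrying a full $\mu_1$-transversal, whose weighted sum is $H^1_*(\mu)$ by Equation~(\ref{eq:fl}).

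For the second equality, set $S_k(\lambda) := \sum_{\mu \preceq_k \lambda} H^1_*(\mu)$, and prove by induction on $k$ the intermediate identity
\[
S_k(\lambda) \;=\; \frac{1}{\alpha^k k!} \sum_{j=0}^k \binom{k}{j} f^*_\lambda(j-1)\, H^1_*(\lambda^{\underline{j}}).
\]
The case $k=0$ is trivial and $k=1$ is Lemma~\ref{prop:h1}. For the inductive step, the double-counting identity $k\, S_k(\lambda) = \sum_{i=1}^{\lambda_1} S_{k-1}(\lambda^{-i})$ (each $\mu \preceq_k \lambda$ is recovered once per removed column) reduces the claim to the $S_{k-1}(\lambda^{-i})$'s. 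Applying the inductive hypothesis and then Lemma~\ref{lem:split} termwise in $j$ produces, for $j = 0, \dots, k-1$, the pair $f^*_\lambda(j-1)H^1_*(\lambda^{\underline{j}}) + f^*_\lambda(j)H^1_*(\lambda^{\underline{j+1}})$ weighted by $\binom{k-1}{j}$. Reindexing the second summand by $j \mapsto j+1$ and invoking Pascal's rule $\binom{k-1}{j} + \binom{k-1}{j-1} = \binom{k}{j}$ collapses the result into the desired single sum with binomial coefficients $\binom{k}{j}$.

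To match the form stated in the theorem, I will verify the clean simplification
\[
f^*_\lambda(j-1)\, H^1_*(\lambda^{\underline{j}}) \;=\; (-1)^j \prod_{i=1}^{\lambda_1} (h_i - j\alpha),
\]
which follows because removing the last $j$ columns of $\lambda$ leaves columns $1,\dots,\lambda_1-j$ unchanged, so $H^1_*(\lambda^{\underline{j}}) = \prod_{i=1}^{\lambda_1-j}(h_i - j\alpha)$, while $f^*_\lambda(j-1) = \prod_{i=0}^{j-1}(j\alpha - h_{\lambda_1-i})$ supplies exactly the missing factors $\prod_{i=\lambda_1-j+1}^{\lambda_1}(h_i - j\alpha)$ together with an overall sign $(-1)^j$. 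Substituting this into the intermediate identity and regrouping the constants (using $H^*_{(k)} = \alpha^k k!$) yields both equivalent forms stated in the theorem.

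The main obstacle is the bookkeeping in the induction step, where one must align $f^*_{\lambda^{-i}}(j-1)$ and $H^1_*((\lambda^{-i})^{\underline{j}})$ with their $\lambda$-counterparts across the telescoping sum; fortunately this alignment is precisely what Lemma~\ref{lem:split} has been built to perform, after which the remaining manipulations are a routine Pascal-rule collapse.
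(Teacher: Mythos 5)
Your proposal is correct and follows essentially the same route as the paper: the same intermediate identity $\alpha^k k!\, S_k(\lambda) = \sum_{j=0}^k \binom{k}{j} f^*_\lambda(j-1) H^1_*(\lambda^{\underline{j}})$, proved by induction on $k$ via the double-counting recursion $k\,S_k(\lambda)=\sum_i S_{k-1}(\lambda^{-i})$ (which the paper phrases as a $k$-fold nested sum over first minors), followed by Lemma~\ref{lem:split}, Pascal's rule, and the sign computation $f^*_\lambda(j-1)H^1_*(\lambda^{\underline{j}})=(-1)^j\prod_{i=1}^{\lambda_1}(h_i-j\alpha)$. All steps, including the base case being Lemma~\ref{prop:h1} and the normalization $H^*_{(k)}=\alpha^k k!$, check out.
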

\begin{proof}
	First, note that
	\[
	k! \sum_{\mu \preceq_k \lambda} H^1_*(\mu) = \!\!\!\! \sum_{\substack{\text{distinct } i_1, \cdots, i_k \\ i_j \in [\lambda_1]}} \!\!\!\!\!\! H^1_*(\lambda^{-i_1-\cdots-i_k}) = \sum_{\mu \preceq_1 \lambda} ~ \sum_{\nu^1 \preceq_1 \mu} \cdots \!\!\!\!\!\! \sum_{\nu^{k-1} \preceq_1 \nu^{k-2}} \!\!\!\! H^1_*(\nu^{k-1}),
	\]
	so it suffices to prove
	\[
		\sum_{\mu \preceq_1 \lambda} ~ \sum_{\nu^1 \preceq_1 \mu} \cdots \!\!\!\!\!\! \sum_{\nu^{k-1} \preceq_1 \nu^{k-2}} \!\!\!\! H^1_*(\nu^{k-1}) = \frac{1}{\alpha^k} \sum_{j =0}^k \binom{k}{j} f_\lambda^*(j-1) H^1_*(\lambda^{\underline{j}}).
	\]
	We proceed by induction on $k$. Applying the induction hypothesis to each $\lambda^{-i} \preceq_1 \lambda$ gives	
	\[
		k! \sum_{\mu \preceq_k \lambda} H^1_*(\mu) = \frac{1}{\alpha^{k-1}}  \sum_{i=1}^{\lambda_1} \sum_{j = 0}^{k-1}\binom{k-1}{j}~f_{\lambda^{-i}}^*(j-1)~H^1_*( (\lambda^{-i})^{\underline{j}}) .
	\]
	After interchanging sums and then applying Lemma~\ref{lem:split} to each inner sum, we have
	\[
		k! \sum_{\mu \preceq_k \lambda} H^1_*(\mu) = \frac{1}{\alpha^{k}}  \sum_{j = 0}^{k-1} \binom{k-1}{j}  
		 \left[f^*_{\lambda}(j-1)H^1_*(\lambda^{\underline{ j}}) + f^*_{\lambda}(j) H^1_*(\lambda^{\underline{j+1}}) \right].
	\]
	Pascal's formula implies that
	\[
		k! \sum_{\mu \preceq_k \lambda} H^1_*(\mu) = \frac{1}{\alpha^{k}} \sum_{j=0}^{k} \binom{k}{j} ~f_\lambda^*(j-1) H^1_*( \lambda^{\underline{j}}).
	\]
	Finally, we have
	\[
		f_\lambda^*(j-1) H^1_*( \lambda^{\underline{j}}) = \prod_{i=0}^{j-1} (j\alpha  - h_{\lambda_1-i}) \prod_{i=1}^{\lambda_1 - j} (h_i - j\alpha) = (-1)^j \prod_{i=1}^{\lambda_1} (h_i - j\alpha),
	\]
	which completes the proof.
\end{proof}
\noindent Those familiar with the umbral calculus or the calculus of finite differences may recognize the right-hand side of the second equation in Theorem~\ref{thm:k} as essentially the $k$th-order forward difference $\Delta^k$ of the univariate degree-$\lambda_1$ polynomial 
\[
	\mathbf{H}^1_*(\lambda,x) := \prod_{i=1}^{\lambda_1} (h_i - x\alpha)
\]
in $x$ at the origin, i.e.,
\begin{align}
	\frac{H^*_{(k)}}{(\lambda_1 - k)!} J^\star_{\lambda_1 - k}(\lambda) = (-1)^k \Delta^k[\mathbf{H}^1_*(\lambda,x)](0)
\end{align}
where $\Delta^k[f](x) := \sum^k_{i=0} (-1)^{k-i} \binom{k}{i} f(x + i)$ for any function $f(x)$. Forward differences of this kind are connected to polynomial interpolation in the falling factorial basis 
$$x^{\underline{k}} := x(x-1)(x-2)\cdots(x-k+1),$$ 
in particular, the \emph{Newton (interpolation) polynomial} $N(x)$ of a set of points $S = \{(x_i,p(x_i))\}_{i=0}^d$:
\[
 N(x) := [p(x_0)] x^{\underline{0}} + [p(x_0),p(x_1)] x^{\underline{1}} + \cdots + [p(x_0),p(x_1),\ldots, p(x_d)] x^{\underline{d}}
\]
where $[p(x_0),\ldots, p(x_j)]$ is the notation for the so-called \emph{$j$th divided difference}. Note that if $p(x)$ is a degree-$d$ polynomial and $|S| > d+1$, then $[p(x_0),\ldots, p(x_{j})] = 0$ for all $j > d$.
 
Finally, we recall the well-known fact that if $x_i = i$ for all $0 \leq i \leq d$, then 
\[
	[p(x_0),p(x_1),\ldots, p(x_j)] = \frac{\Delta^j[p](0)}{j!},
\]
and the Newton interpolation polynomial is of the form 
\begin{align}
 	N(x) = \frac{p(0)}{0!} x^{\underline{0}} + \frac{\Delta^1 [p](0)}{1!} x^{\underline{1}} + \cdots + \frac{\Delta^d [p](0)}{d!} x^{\underline{d}}.
\end{align}
See Stanley~\cite[Ch.~1.9]{Stanley2011} for a more in-depth discussion of the calculus of finite differences and its connections to combinatorics. In the next section, we show that each Jack derangement number is the sum of the coefficients of a Newton polynomial. 

\section{Proof of Theorem~\ref{thm:main}}\label{sec:main}
Building off the results of the previous sections, we give a proof of Theorem~\ref{thm:main} in this section. For all $j > 0$, define
\[
	H^1_*(\lambda,j) := \prod_{i=1}^{\lambda_1} (h_i - j\alpha)
\]
to be the \emph{$j$-shifted principal lower hook product}. It will be convenient to think of the shifted principal lower hook product as a univariate polynomial in $x$, i.e.,
\[
	\mathbf{H}^1_*(\lambda,x) := \prod_{i=1}^{\lambda_1} (h_i - x\alpha).
\]
We let $d^{(\alpha)}_{n,k}$ denote the $\alpha$-generalization of the \emph{rencontres numbers}, that is, 
\[
	d^{(\alpha)}_{n,k} := \frac{\alpha^{n}n!}{\alpha^k k!}\sum_{i=0}^{n-k} \frac{(-1)^{i}}{\alpha^i i!}.
\]
For $\alpha = 1$, the rencontres numbers $d_{n,k} := d^{(1)}_{n,k}$ count the number of permutations of $S_n$ that have precisely $k$ fixed points.
\begin{theorem}\label{thm:dnk}
For all $\lambda$, $\alpha \in \mathbb{R}$, and $n \geq \lambda_1$, we have 
\[
	\eta_\alpha^\lambda = (-1)^{|\lambda|-\lambda_1}  \frac{1}{\alpha^{n} n!} \sum_{j = 0}^{n} d^{(\alpha)}_{n,j} H^1_*(\lambda,j).
\]
\end{theorem}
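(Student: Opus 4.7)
The plan is to combine Theorem~\ref{cor:eigs} with the finite-differences form of Theorem~\ref{thm:k}, and then to reindex in such a way that the classical rencontres-number generating sum falls out on its own. The only non-obvious step is explaining why the upper summation index can be pushed from $\lambda_1$ up to an arbitrary $n \ge \lambda_1$; this is exactly where the fact that $\mathbf{H}^1_*(\lambda, x)$ is a polynomial in $x$ of degree $\lambda_1$ will be used.

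First, I would substitute Theorem~\ref{thm:k} into Corollary~\ref{cor:eigs} to obtain the double sum
\[
\eta_\alpha^\lambda \;=\; (-1)^{|\lambda|-\lambda_1}\sum_{k=0}^{\lambda_1}\,\sum_{j=0}^{k}\frac{(-1)^{k+j}}{\alpha^{k}(k-j)!\,j!}\,H_*^1(\lambda,j).
\]
The inner sum over $j$ is (up to sign) precisely $\Delta^k[\mathbf{H}^1_*(\lambda,x)](0)/k!$, where $\Delta^k$ is the $k$th forward difference operator at the origin. Since $\mathbf{H}^1_*(\lambda,x)$ is a polynomial in $x$ of degree $\lambda_1$, a standard fact from the calculus of finite differences gives $\Delta^k[\mathbf{H}^1_*(\lambda,x)](0)=0$ for every $k > \lambda_1$. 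Hence for any $n \ge \lambda_1$ I may freely extend the outer summation from $k = \lambda_1$ up to $k = n$ without changing the value, yielding
\[
\eta_\alpha^\lambda \;=\; (-1)^{|\lambda|-\lambda_1}\sum_{k=0}^{n}\,\sum_{j=0}^{k}\frac{(-1)^{k+j}}{\alpha^{k}(k-j)!\,j!}\,H_*^1(\lambda,j).
\]

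Next I would swap the order of summation, setting $i := k-j$, so that $i$ ranges over $0 \le i \le n-j$ and $j$ ranges over $0 \le j \le n$:
\[
\eta_\alpha^\lambda \;=\; (-1)^{|\lambda|-\lambda_1}\sum_{j=0}^{n}\frac{H_*^1(\lambda,j)}{\alpha^{j}\,j!}\sum_{i=0}^{n-j}\frac{(-1)^{i}}{\alpha^{i}\,i!}.
\]
By definition of $d^{(\alpha)}_{n,j}$, the inner factor is exactly $d^{(\alpha)}_{n,j}/(\alpha^{n}\,n!)$, and the claimed identity follows immediately.

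The only step that requires any thought is the polynomial-degree / finite-differences extension; every other step is a substitution or a change of summation variable. I would therefore flag this extension as the pivotal observation and, if space permits, briefly recall the general identity $\Delta^k[p](0) = 0$ for polynomials of degree $< k$ together with the remark (already in the text) that
\[
\sum_{j=0}^{k}(-1)^j \binom{k}{j} H_*^1(\lambda,j) \;=\; (-1)^{k}\,\Delta^{k}\!\bigl[\mathbf{H}^1_*(\lambda,x)\bigr](0)
\]
to make the zero-padding step transparent. No new identities beyond Corollary~\ref{cor:eigs}, Theorem~\ref{thm:k}, and the finite-differences observation are needed.
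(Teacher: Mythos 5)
Your proposal is correct and follows essentially the same route as the paper: substitute Theorem~\ref{thm:k} into Corollary~\ref{cor:eigs}, interchange the order of summation to expose the rencontres sum, and use the vanishing of $\Delta^k[\mathbf{H}^1_*(\lambda,x)](0)$ for $k>\lambda_1$ to pass from $\lambda_1$ to an arbitrary $n\ge\lambda_1$. The only difference is cosmetic — you extend the outer sum before swapping, whereas the paper proves the $n=\lambda_1$ case first and then extends — and both hinge on the same degree argument.
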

\begin{proof}
By Theorem~\ref{cor:eigs} we have
\begin{align*}
	\eta^\lambda_\alpha &= (-1)^{|\lambda|-\lambda_1} \sum_{k=0}^{\lambda_1} (-1)^k \sum_{\mu \preceq_k \lambda} H^1_*(\mu).\\
	\intertext{By Theorem~\ref{thm:k}, we have}
	&= (-1)^{|\lambda|-\lambda_1} \sum_{k=0}^{\lambda_1} \frac{ (-1)^k }{\alpha^k} \sum_{j =0}^k (-1)^j \frac{ H^1_*(\lambda,j) }{{ (k-j)! j!}}.\\
	\intertext{Interchanging summations gives us}
	& = (-1)^{|\lambda|-\lambda_1}  \sum_{j = 0}^{\lambda_1} \sum_{k=j}^{\lambda_1} \frac{(-1)^{k-j}}{\alpha^k}  \frac{H^1_*(\lambda,j)}{(k-j)!j!}\\
	& = (-1)^{|\lambda|-\lambda_1}  \sum_{j = 0}^{\lambda_1} \frac{H^1_*(\lambda,j)}{\alpha^jj!} \sum_{k=j}^{\lambda_1} \frac{(-1)^{k-j}}{\alpha^{k-j}(k-j)!}\\
	& = (-1)^{|\lambda|-\lambda_1}  \frac{1}{\alpha^{\lambda_1} \lambda_1!} \sum_{j = 0}^{\lambda_1} d^{(\alpha)}_{\lambda_1,j} H^1_*(\lambda,j),
\end{align*}
which proves the result for $n = \lambda_1$. Since $\mathbf{H}^1_*(\lambda,x)$ has degree $\lambda_1$, the $n$th order forward difference $\Delta^n$ of $\mathbf{H}^1_*(\lambda,x)$ at the origin vanishes for all $n > \lambda_1$. Therefore, we have
\[
	\sum_{k=0}^{\lambda_1} \frac{1}{\alpha^k} \sum_{j =0}^k (-1)^{k-j} \frac{ H^1_*(\lambda,j) }{{ (k-j)! j!}} = \sum_{k=0}^{n} \frac{1}{\alpha^k} \sum_{j =0}^k (-1)^{k-j} \frac{ H^1_*(\lambda,j) }{{ (k-j)! j!}}
\]
for all $n \geq \lambda_1$, thus 
$$\eta_\alpha^\lambda = (-1)^{|\lambda|-\lambda_1}  \frac{1}{\alpha^{n} n!} \sum_{j = 0}^{n} d^{(\alpha)}_{n,j} H^1_*(\lambda,j),$$
as desired.
\end{proof}
\noindent Theorem~\ref{thm:dnk} allows us to connect the Jack derangement sums to the Poisson distribution. For all $\alpha \in \mathbb{R}$, a simple induction shows that $\sum_{j=0}^n d^{(\alpha)}_{n,j}/\alpha^nn! = 1$, and moreover, that
\[
	\lim_{n \rightarrow \infty} \frac{d^{(\alpha)}_{n,k}}{\alpha^nn!} = \frac{e^{-1/\alpha}}{\alpha^k k!}.
\]
For $\alpha > 0$, the limiting distribution is the Poisson distribution with expected value $1/\alpha$. After taking limits, for all $\alpha \in \mathbb{R}$, we have  
\begin{align}\label{eq:poisson}
	\eta_\alpha^\lambda = (-1)^{|\lambda|-\lambda_1} e^{-1/\alpha} \sum_{x = 0}^\infty  \frac{H^1_*(\lambda,x)}{\alpha^x x!}.
\end{align}
For $\alpha > 0$, we may interpret the Jack derangement sum as some type of ``generalized factorial moment" of the Poisson distribution (up to sign), i.e., 
\begin{align*}
	\eta_\alpha^\lambda = (-1)^{|\lambda|-\lambda_1} \mathbb{E}[\mathbf{H}^1_*(\lambda,x)].
\end{align*}
A combinatorial interpretation of these moments will follow as a corollary of Theorem~\ref{thm:main}. It is well-known that the factorial moments of the Poisson distribution have a remarkably simple form. For all $\alpha \in \mathbb{R}$, we have
\begin{align}\label{eq:fact}
\lim_{x \rightarrow \infty} \frac{x^{\underline{k}_\alpha}}{\alpha^x x!} = e^{1/\alpha}
\end{align}
where $x^{\underline{k}_\alpha} := \alpha^k x^{\underline{k}}$. In light of Equation~(\ref{eq:poisson}), the foregoing suggests that we should express the polynomial $\mathbf{H}^1_*(\lambda,x)$ in the \emph{$\alpha$-falling factorial basis} $\{x^{\underline{k}_\alpha}\}$, which we determine below for $\lambda$ such that $\lambda_1 = 1,2,3$. \\

If $\lambda_1 = 1$, then we have $\mathbf{H}^1_*(\lambda,x) = -x^{\underline{1}_\alpha} + \lambda_1'x^{\underline{0}_\alpha}$. If $\lambda_1 = 2$, then we have 
\[
	\mathbf{H}^1_*(\lambda,x) = x^{\underline{2}_\alpha} - (\lambda_2' + \lambda_1')x^{\underline{1}_\alpha}  + \lambda_2'(\alpha + \lambda_1')x^{\underline{0}_\alpha}.
\]
If $\lambda_1 = 3$, then we may write $\mathbf{H}^1_*(\lambda,x)$ as  
\[
-x^{\underline{3}_\alpha} + (\lambda_3' + \lambda_2' + \lambda_1') x^{\underline{2}_\alpha} - ((\alpha + \lambda_1')\lambda_3' + (\alpha + \lambda_1')\lambda_2' + (\alpha + \lambda_2')\lambda_3') x^{\underline{1}_\alpha}  + \lambda_3'(\alpha + \lambda_2')(2\alpha + \lambda_1').
\]
Indeed, the following proposition shows that each coefficient of $\mathbf{H}^1_*(\lambda,x)$ expressed in the $\alpha$-falling factorial basis is a polynomial $c^\lambda_k(\alpha)$ that admits a combinatorial interpretation.

\begin{proposition} \label{prop:count}
Let $\hat{\lambda}$ be the partition obtained by removing the first column of $\lambda$, and let $\emph{\#cyc}(\sigma)$ denote the number of cycles of a permutation $\sigma$. For all $\alpha \in \mathbb{R}$, we have
$$\mathbf{H}^1_*(\lambda,x) = \sum_{k = 0}^{\lambda_1} c_k^\lambda(\alpha) x^{\underline{k}_\alpha}$$ 
where $c_k^\lambda(\alpha) = (\alpha(\lambda_1-1-k) + \lambda_{1}') c_k^{\hat{\lambda}} (\alpha) - c_{k-1}^{\hat{\lambda}} (\alpha)$, $c_{k}^{\lambda}(\alpha) := 0$ if $k>\lambda_1$, $c_{-1}^{\lambda}(\alpha) := 0$. Moreover, we have 
\[
	(-1)^k[\alpha^{\lambda_1-k-j}]c_k^\lambda(\alpha) = \!\! \sum_{\substack{I \subseteq [\lambda_1] \\ |I| = k }} |\left \{ (c,\sigma) \in \mathcal{S}_\lambda : \emph{\#cyc}(\sigma) = k+j ~\emph{ and } ~ c_i = 1, \sigma(i) = i~\forall i \in I \right \}|.
\] 
\end{proposition}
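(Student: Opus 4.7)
The plan is to proceed by induction on $\lambda_1$, establishing the recurrence for $c_k^\lambda(\alpha)$ and the combinatorial formula in tandem. The key structural input is the factorization $\mathbf{H}^1_*(\lambda, x) = (h_1 - \alpha x)\, \mathbf{H}^1_*(\hat\lambda, x)$, where $h_1 = \alpha(\lambda_1 - 1) + \lambda_1'$. This follows from $h^\lambda_*(1,i) = \alpha(\lambda_1 - i) + \lambda_i'$ together with the identity $\hat\lambda_j' = \lambda_{j+1}'$, which gives $h^\lambda_*(1,i) = h^{\hat\lambda}_*(1, i-1)$ for all $i \geq 2$. Applying $(h_1 - \alpha x)$ to $\sum_k c_k^{\hat\lambda}(\alpha)\, x^{\underline{k}_\alpha}$ and using the identity $\alpha x \cdot x^{\underline{k}_\alpha} = x^{\underline{k+1}_\alpha} + \alpha k\, x^{\underline{k}_\alpha}$ (a direct consequence of $x \cdot x^{\underline{k}} = x^{\underline{k+1}} + k\, x^{\underline{k}}$) collects into the stated recurrence after comparing coefficients of each $x^{\underline{k}_\alpha}$.

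For the combinatorial interpretation, let $F^\lambda(k,j) := (-1)^k[\alpha^{\lambda_1-k-j}] c_k^\lambda(\alpha)$ and let $G^\lambda(k,j)$ denote the cardinality appearing on the right-hand side of the proposition. Extracting $(-1)^k[\alpha^{\lambda_1-k-j}]$ from the polynomial recurrence (using $\hat\lambda_1 = \lambda_1 - 1$) shows that
\[
	F^\lambda(k,j) = (\lambda_1 - 1 - k)\, F^{\hat\lambda}(k, j) + \lambda_1'\, F^{\hat\lambda}(k, j-1) + F^{\hat\lambda}(k-1, j),
\]
so it suffices to show that $G^\lambda$ satisfies the same recurrence and matches at the base case $\lambda = \emptyset$, where $G^\emptyset(0,0) = 1 = F^\emptyset(0,0)$.

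The combinatorial recurrence is obtained by partitioning the structures counted by $G^\lambda(k,j)$ according to the role of symbol $1$, using the natural bijection between $\mathcal{S}_{\hat\lambda}$ and colored permutations on $\{2, \ldots, \lambda_1\}$ (well-defined because $\hat\lambda_i' = \lambda_{i+1}'$ aligns the color lists). If $1 \in I$, then $\sigma(1) = 1$ and $c_1 = 1$ are forced and deletion of $1$ yields an element counted by $G^{\hat\lambda}(k-1, j)$. If $1 \notin I$ and $\sigma(1) = 1$, then $c_1$ is free in $[\lambda_1']$ and removing the singleton cycle drops the cycle count by one, contributing $\lambda_1' \cdot G^{\hat\lambda}(k, j-1)$. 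If $1 \notin I$ and $\sigma(1) = m \neq 1$, then $m$ must lie in $\{2,\ldots,\lambda_1\} \setminus I$ (since $m$ is not a fixed point of $\sigma$), and splicing $1$ out of its cycle preserves the cycle count, contributing $(\lambda_1-1-k) \cdot G^{\hat\lambda}(k, j)$.

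The main obstacle, and the step I expect will need the most care, is verifying that the bijection in the last case is well-defined: when reinserting $1$ before a chosen $m$ in a colored permutation $(c', \sigma')$ on $\{2, \ldots, \lambda_1\}$, the induced color $c_1 := c'_m$ must lie in $L(1) = [\lambda_1']$. This holds automatically because $\lambda_1' \geq \lambda_m'$ for every $m$, so $L(1)$ is the least restrictive color list; consequently removing or reinserting symbol $1$ never introduces a color-validity constraint. Once this check is in place, the three cases sum to match the recurrence and the induction closes.
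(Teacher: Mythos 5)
Your proposal is correct and follows essentially the same route as the paper: both prove the recurrence by induction via the factorization $\mathbf{H}^1_*(\lambda,x)=(h_1-\alpha x)\,\mathbf{H}^1_*(\hat\lambda,x)$ and the falling-factorial identity, and both obtain the combinatorial statement by tracking the three ways symbol $1$ can be incorporated into a $\hat\lambda$-colored permutation (joined to an existing non-$I$ cycle, made a new singleton, or made a designated fixed point). Your write-up of the combinatorial half, with the explicit matching recurrences for $F^\lambda(k,j)$ and $G^\lambda(k,j)$ and the color-list check $L(1)\supseteq L(m)$, is in fact more careful than the paper's terse sketch.
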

\begin{proof}
By induction, we have 
\begin{align*}
	\mathbf{H}^1_*(\lambda,x) &=  (\alpha ((\lambda_1-1)-x)+\lambda_{1}') \sum_{k = 0}^{\lambda_1-1} c^{\hat{\lambda}}_k(\alpha) x^{\underline{k}_\alpha}\\
	&=  \alpha(\lambda_1 - 1 - x) \sum_{k = 0}^{\lambda_1-1} c^{\hat{\lambda}}_k(\alpha) x^{\underline{k}_\alpha} + \lambda_{1}' \sum_{k = 0}^{\lambda_1-1} c^{\hat{\lambda}}_k(\alpha) x^{\underline{k}_\alpha}\\
	&=  \sum_{k = 0}^{\lambda_1-1}(\alpha(\lambda_1 - 1-k) - \alpha(x-k)) c^{\hat{\lambda}}_k(\alpha) x^{\underline{k}_\alpha} + \lambda_1' \sum_{k = 0}^{\lambda_1-1} c^{\hat{\lambda}}_k(\alpha) x^{\underline{k}_\alpha}\\
	&=  \sum_{k = 0}^{\lambda_1-1} (\alpha(\lambda_1 - 1-k)c^{\hat{\lambda}}_k(\alpha) x^{\underline{k}_\alpha} - c^{\hat{\lambda}}_k(\alpha) x^{\underline{k+1}_\alpha}) + \lambda_1' \sum_{k = 0}^{\lambda_1-1} c^{\hat{\lambda}}_k(\alpha) x^{\underline{k}_\alpha}\\
	&=  \sum_{k = 0}^{\lambda_1-1} (\alpha(\lambda_1 - 1-k) + \lambda_1')c^{\hat{\lambda}}_k(\alpha) x^{\underline{k}_\alpha} -c^{\hat{\lambda}}_k(\alpha) x^{\underline{k+1}_\alpha})\\
	&= \sum_{k = 0}^{n-1} [(\alpha(\lambda_1 - 1-k) + \lambda_1')c^{\hat{\lambda}}_k(\alpha) - c^{\hat{\lambda}}_{k-1}(\alpha)] x^{\underline{k}_\alpha} \\
	&= \sum_{k = 0}^{\lambda_1} c_k^\lambda(\alpha) x^{\underline{k}_\alpha},
\end{align*}
which proves the first statement. To prove the second statement, note that the recurrence relation shows that $\text{sgn}~c_k^\lambda(\alpha) = (-1)^k$. The parameter $\alpha$ records the $\lambda_1 - 1 - k$ ways to join a cycle of $\hat{\lambda}$-colored permutation that is not one of $k$ singleton cycles $I \subseteq [\lambda_1] \setminus \{1\}$. There are $\lambda_1'$ ways of not joining a $\hat{\lambda}$-colored permutation. Of the latter, the choice $(1,1) \in \lambda$ results in a fixed point $1 \in I$, leaving are $k-1$ choices for the remaining elements of $I \subseteq [\lambda_1] \setminus \{1\}$. Therefore, we add $|c_{k-1}^{\hat{\lambda}}(\alpha)|$, which completes the proof. 
\end{proof}

\begin{proof}[Proof of Theorem~\ref{thm:main}]
By Equation~(\ref{eq:poisson}), it suffices to show that 
\[
	 e^{-1/\alpha} \sum_{x = 0}^\infty \frac{ H^1_*(\lambda,x)}{\alpha^x x!} = \sum_{j = 0}^{\lambda_1} d^\lambda_j \alpha^{\lambda_1 - j} = D_{\alpha}^\lambda.
\]
Recall that $c^\lambda_k(\alpha)$ = $[x^{\underline{k}_\alpha}] H^1_*(\lambda,x)$ is the $x^{\underline{k}_\alpha}$-coefficient of  $H^1_*(\lambda,x)$ expressed in the $\alpha$-falling factorial basis. By Proposition~\ref{prop:count} and Equation~(\ref{eq:fact}), we have
\begin{align*}
 e^{-1/\alpha}\sum_{x=0}^\infty \frac{H^1_*(\lambda,x)}{\alpha^x x!} &=  e^{-1/\alpha} \sum_{x = 0}^{\infty} \sum_{k=0}^{\lambda_1} \frac{c^\lambda_k(\alpha)  x^{\underline{k}_\alpha}}{\alpha^x x!} \\
 &=  e^{-1/\alpha}  \sum_{k=0}^{\lambda_1} \sum_{x = 0}^{\infty} \frac{c^\lambda_k(\alpha) x^{\underline{k}_\alpha}}{\alpha^x x!}. \\
 &= \sum_{k=0}^{\lambda_1}c^\lambda_k(\alpha).
 \intertext{By the Principle of Inclusion-Exclusion, we have}
&= \sum_{j = 0}^{\lambda_1} d^\lambda_j \alpha^{\lambda_1 - j} = D^\lambda_\alpha,
\end{align*}
which completes the proof of our first main result.
\end{proof}

\section{Proofs of Corollaries~\ref{thm:ast}, \ref{thm:kwt}, and \ref{thm:minmax}}\label{sec:cor}

With Theorem~\ref{thm:main} in hand, we now give short proofs of the corollaries stated in Section~\ref{sec:intro}.

\begin{proof}[Proof of Corollary~\ref{thm:ast}]
Clearly $D^\lambda_\alpha \geq 0$ for all $\alpha \geq 0$, so the proof follows from Theorem~\ref{thm:main}.
\end{proof}
\begin{proof}[Proof of Corollary~\ref{thm:kwt}]
By Proposition~\ref{prop:dom} and induction, it suffices to prove the result for $\mu,\lambda$ such that $\mu \nearrow \lambda$ (see Section~\ref{sec:prelim} for a review of the dominance ordering $\trianglelefteq$). Let $i$ be the column of the outer corner and let $j$ be the column of the inner corner.  Note that $i < j$.

By Theorem~\ref{thm:main}, for all $\nu$, we have $|\eta^\nu_\alpha| = \sum_{k=1}^{\nu_1} d^\nu_k \alpha^{\nu_1 - k}$, so it suffices to show that $d^\mu_k \leq d^\lambda_k$ for all $k$, i.e., that the number of colored derangements $(c,\sigma)$ with precisely $k$ disjoint cycles does not decrease when the symbol $i$ loses the color $b := \mu'_i$ and the symbol $j$ gains the color $a := \lambda'_j = \mu'_j + 1$. To show this, we give an injective map $\phi_k : \mathcal{D}^\mu_k \rightarrow \mathcal{D}^\lambda_k$ for all $k$ as follows. 

First, since $i < j$, we have $b > a$. If $c_i \neq b$, then $\phi_k(c,\sigma) = (c,\sigma) \in \mathcal{D}^\lambda_k$. If $c_i = b$, then $\phi_k(c, \sigma) = (c', (i~j) \sigma (i~j))$ where the coloring $c'$ is defined below (note that $\phi_k$ is indeed well-defined since we have $\mu_1 = \lambda_1$, i.e., $\mu$-colored and $\lambda$-colored permutations are defined on the same symbol set $[\lambda_1] = [\mu_1]$). 

Since $c_i = b$, the symbols $i$ and $j$ do not belong to the same cycle of $\sigma$, thus $c_i \neq c_j$. Also, recall that $(i~j) \sigma (i~j)$ relabels the symbols of $\sigma$ so that $i$ becomes $j$ and vice versa. Let $I$ be the set of symbols of the cycle of $\sigma$ that contains $i$. Define $c_{i'}' := a$ for all $i' \in (I \cup \{j\}) \setminus \{i\}$ so that all the symbols of $j$'s cycle in $(i~j) \sigma (i~j)$ have the same color. Define $c_i' := c_j$ so that all symbols in $i$'s cycle of $(i~j) \sigma (i~j)$ have the same color. Finally, let $c_l' := c_l$ for all remaining symbols $l \notin I \cup \{j\}$. Clearly $(i~j) \sigma (i~j)$ has the same cycle type as $\sigma$, and so it follows that $\phi_k(c, \sigma) \in \mathcal{D}^\lambda_k$. It is also clear that every $(c',\sigma')$ in the image of $\phi_k$ has a unique preimage; therefore, $\phi_k$ is injective for all $k$, as desired.
\end{proof}
Before we prove Corollary~\ref{thm:minmax}, which characterizes the extrema of the Jack derangements for $\alpha \geq 1$, we require a proposition that is essentially the Jack generalization of the well-known fact that the probability of drawing a derangement uniformly at random from $S_n$ is greater than $1/3$ for $n \geq 4$.
\begin{proposition}\label{prop:lb}
For all $\alpha \geq 1$ and $n \geq 4$, we have $D^{(n)}_\alpha > H_*^{(n)}/3$.
\end{proposition}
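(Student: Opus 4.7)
The plan is to establish the $\alpha$-analogue of the classical recursion $D_n = (n-1)(D_{n-1}+D_{n-2})$ and then perform a strong induction on $n$. First I would prove combinatorially that
\[
	D^{(n)}_\alpha = (n-1)\alpha\bigl(D^{(n-1)}_\alpha + D^{(n-2)}_\alpha\bigr).
\]
Noting that a derangement of $[n]$ has weight $\alpha^{n-\#\mathrm{cyc}(\sigma)}$, I would split on $j := \sigma^{-1}(n) \in [n-1]$ (yielding $n-1$ choices). If $(n,j)$ is a $2$-cycle of $\sigma$, removing it leaves a derangement of an $(n-2)$-element set and contributes an extra weight factor of $\alpha$, accounting for $(n-1)\alpha D^{(n-2)}_\alpha$. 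Otherwise $n$ lies in a cycle of length $\geq 3$, and deleting $n$ (i.e.\ setting $\sigma'(j) := \sigma(n)$) yields a derangement of $[n-1]$ whose weight equals $1/\alpha$ times the weight of $\sigma$, accounting for $(n-1)\alpha D^{(n-1)}_\alpha$.

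Combining this with $H^{(n)}_* = (1+(n-1)\alpha)H^{(n-1)}_*$ and writing $r_n := D^{(n)}_\alpha/H^{(n)}_*$ converts the recursion into
\[
	r_n = \frac{(n-1)\alpha}{1+(n-1)\alpha}\left(r_{n-1} + \frac{r_{n-2}}{1+(n-2)\alpha}\right).
\]
The key algebraic fact is the identity
\[
	(n-1)\alpha\bigl(2+(n-2)\alpha\bigr) - \bigl(1+(n-1)\alpha\bigr)\bigl(1+(n-2)\alpha\bigr) = \alpha - 1,
\]
which is nonnegative for $\alpha \geq 1$. Consequently, if $r_{n-1},\, r_{n-2} \geq 1/3$, then
\[
	r_n \geq \frac{(n-1)\alpha\bigl(2+(n-2)\alpha\bigr)}{3\,(1+(n-1)\alpha)(1+(n-2)\alpha)} \geq \frac{1}{3},
\]
with strict inequality whenever either input exceeds $1/3$ or $\alpha > 1$.

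For the base cases I would verify directly that $r_2 = \alpha/(1+\alpha) \geq 1/2$; that $r_3 = 2\alpha^2/((1+\alpha)(1+2\alpha)) \geq 1/3$, which reduces to $(\alpha-1)(4\alpha+1) \geq 0$ with equality iff $\alpha=1$; and that $r_4 > 1/3$ strictly for all $\alpha \geq 1$, which reduces to $12\alpha^3 - 2\alpha^2 - 6\alpha - 1 > 0$ (value $3$ at $\alpha=1$, with derivative $36\alpha^2 - 4\alpha - 6 \geq 26$ there, hence strictly increasing on $[1,\infty)$).

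The main obstacle is managing the strict-versus-non-strict inequality at the edge $\alpha = 1$: because $r_3 = 1/3$ exactly in this case, the inductive step from $(r_2,r_3)$ yields only $r_4 \geq 1/3$, so the strict bound at $n=4$ must be verified by the separate polynomial computation above. Once $r_4 > 1/3$ is secured, the recursion propagates strict positivity from any two consecutive values, so $r_n > 1/3$ follows for all $n \geq 4$, which is exactly $D^{(n)}_\alpha > H^{(n)}_*/3$.
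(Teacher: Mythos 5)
Your proof is correct, and it takes a genuinely different route from the paper's. The paper starts from the inclusion--exclusion expansion $D^{(n)}_\alpha = \sum_{k=0}^{n}(-1)^k\binom{n}{k}H_*^{(n-k)}$ (a specialization of Theorems~\ref{cor:eigs} and~\ref{thm:main} to $\lambda=(n)$), observes that for $\alpha\geq 1$ the terms decrease in magnitude, and truncates the alternating series after the $k=3$ term to get the bound $H_*^{(n)}/3$. You instead prove the $\alpha$-weighted analogue $D^{(n)}_\alpha=(n-1)\alpha\bigl(D^{(n-1)}_\alpha+D^{(n-2)}_\alpha\bigr)$ of the classical two-term derangement recursion and induct on the ratio $r_n=D^{(n)}_\alpha/H^{(n)}_*$. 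I checked the details: the weight bookkeeping in both cases of the cycle-deletion bijection is right (each contributes an extra factor of $\alpha$, from the extra cycle in the $2$-cycle case and from the extra element in the long-cycle case); the identity $(n-1)\alpha(2+(n-2)\alpha)-(1+(n-1)\alpha)(1+(n-2)\alpha)=\alpha-1$ is correct; and the base values $r_2=\alpha/(1+\alpha)$, $r_3=2\alpha^2/((1+\alpha)(1+2\alpha))$, $r_4=(6\alpha^3+3\alpha^2)/((1+\alpha)(1+2\alpha)(1+3\alpha))$ are computed correctly, with the equality case $r_3=1/3$ at $\alpha=1$ handled properly by verifying $r_4>1/3$ separately. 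What each approach buys: yours is self-contained at the combinatorial level---it uses only the definition of $D^{(n)}_\alpha$ as a cycle-weighted derangement count, not the main theorem---and the recursion itself is a pleasant Jack analogue of a classical identity, very much in the spirit of the paper's closing remarks; the paper's truncation argument is shorter once the expansion is available and makes the constant $1/3$ transparent as a partial sum of $e^{-1/\alpha}$.
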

\begin{proof} By our main result, we have
\begin{align*}
	D^{(n)}_\alpha &= \sum_{k=0}^n (-1)^k \frac{n(n-1)\cdots(n-k+1)}{k!} H_*^{(n-k)}\\ 
	&= H_*^{(n)} \sum_{k=0}^n \frac{(-1)^k}{k!} \frac{n(n-1)\cdots(n-k+1)}{(\alpha(n-1) + 1)(\alpha(n-2) + 1) \cdots (\alpha(n-k)+1) }\\
	\intertext{For $\alpha \geq 1$ and $k > 0$, we have
	\[ 
		\frac{n(n-1)\cdots(n-k+1)}{k!(\alpha(n-1) + 1) \cdots (\alpha(n-k)+1) } - \frac{n(n-1)\cdots(n-(k+1)+1)}{(k+1)!(\alpha(n-1) + 1)\cdots (\alpha(n-(k+1))+1) }  > 0. 
	\]
Iteratively applying this fact to the $k \geq 4$ terms of the summation gives us }
		&> H_*^{(n)} [ 1 - \frac{n}{\alpha(n-1)+1} + \frac{n(n-1)}{ 2(\alpha( n-1) +1)(\alpha(n-2) +1)} \\
		&\quad\quad\quad - \frac{n(n-1)(n-2)}{6(\alpha(n-1) + 1)(\alpha (n- 2) +1)(\alpha(n - 3) + 1)}] \\
	&\geq H_*^{(n)}/3,
\end{align*}
where the last inequality follows from the fact that $\alpha \geq 1$.
\end{proof}
\begin{proof}[Proof of Corollary~\ref{thm:minmax}]
Let $\mu := (n-1,1)$. By Theorem~\ref{thm:main} and Proposition~\ref{prop:lb}, we have 
\[
	\eta^{(n)}_\alpha = D^{(n)}_\alpha,  \quad \text{ and } \quad  |\eta^{\mu}_\alpha| = D^{(n)}_\alpha /(\alpha(n-1)) > H^{(n)}_*/3(\alpha(n-1)). 
\]
Since $\alpha \geq 1$, we have $|\eta^\lambda_\alpha| \leq H^1_*(\lambda)$ by Proposition~\ref{prop:upperbound}; therefore, it suffices to show that 
\[
	H^1_*(\mu)/3 \geq H^1_*(\lambda) \quad \text{ for all $\lambda \neq (n),\mu$.}
\]
Recall that $h_\lambda(i,j) = a_\lambda(i,j) + l_\lambda(i,j) + 1$ is the hook length of $(i,j) \in \lambda$.
Define $$A := \{ h^\lambda_*(1,j) \}_{j = 1}^{\lambda_1} \quad \text{ and } \quad B := \{ h^{\mu}_*(1,j) \}_{j = 1}^{n-1}.$$ 
Note that $|A| < |B|$. Now define the injective map $\phi$ on lower hook lengths of the first row
$$\phi : A \rightarrow B \quad \text{ such that } \quad h^\lambda_*(1,j) \mapsto h^{\mu}_*(1,j')$$ 
where $j'$ is the greatest column index of $\mu$ for which $h_\lambda(1,j) \leq h_{\mu}(1,j')$. Due to the fact that $a_\lambda(1,j) \leq a_{\mu}(1,j')$, we have $h^\lambda_*(1,j) \leq h^{\mu}_*(1,j')$. 
Let $\text{im}\phi \subseteq B$ be the image of $\phi$. By the definition of $\phi$, we have $\prod_{a \in A} a \leq \prod_{b \in \text{im} \phi} b$. Since $n \geq 6$, we have $3 \leq \prod_{b \notin \text{im} \phi} b$, thus
$$ H^1_*(\lambda) = \prod_{a \in A} a \leq 3 \prod_{b \in \text{im}\phi} b \leq H^1_*(\mu)/3,$$ as required.
\end{proof}

\noindent It may be interesting to explore these corollaries for other ranges of $\alpha \in \mathbb{R}$. Computational experiments show that the Jack derangements behave quite differently when $\alpha < 0$, but perhaps there is still an elegant characterization of the sign, relative magnitude, and extrema in this range. Using Corollary~\ref{thm:kwt}, one could also try to extend Corollary~\ref{thm:minmax} to a total ordering of all the Jack derangement sums.

\section{Eigenvalues of the Permutation Derangement Graph}\label{sec:eigs}

All of the recursive expressions mentioned in Section~\ref{sec:intro} for the eigenvalues of the permutation derangement graph  embark from~\cite[Ex. 7.63a]{StanleyV201}, where Stanley considers the sum 
$$d_\lambda := \sum_{\pi \in D_n} \chi^\lambda(\pi)$$
and shows it can be expressed in terms of the complete homogeneous symmetric functions:
\[
	\sum_{\lambda \vdash n} d_\lambda s_\lambda = \sum^n_{k=0} (-1)^{n-k} n^{\underline{k}}h_1^{n-k} h_{n-k}.
\]
For hook shapes, both Stanley~\cite[Ex. 7.63b]{StanleyV201} and Okazaki~\cite[Corollary 1.3]{OkazakiPhD} prove that
\[
	d_{(j,1^{n-j})} = (-1)^{n-j} \binom{n}{j}|D_j| + (-1)^{n-1} \binom{n-1}{j} = (-1)^{n-j} \binom{n-1}{j} ( (n-j) |D_{j-1}| + |D_{j}| ).
\]
Recalling from Section~\ref{sec:intro} that $\eta_1^\lambda = d_\lambda / f^\lambda$ where $f^\lambda := \chi^\lambda(1)$ is the number of standard Young tableaux of shape $\lambda$, the following extends Stanley and Okazaki's results to all $\lambda$. 
\begin{corollary}$d_\lambda  =  (-1)^{|\lambda|-\lambda_1} f^\lambda D^\lambda$.
\end{corollary}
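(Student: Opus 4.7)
The plan is to unwind the definitions and apply Theorem~\ref{thm:main} at $\alpha = 1$; there is essentially nothing else to do. The introduction already records the identity
\[
\eta^\lambda_1 = \frac{1}{\chi^\lambda(1)}\sum_{\pi \in D_n}\chi^\lambda(\pi) = \frac{d_\lambda}{f^\lambda},
\]
so multiplying through by $f^\lambda$ gives $d_\lambda = f^\lambda\,\eta^\lambda_1$, which immediately reduces the problem to evaluating $\eta^\lambda_1$.

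Next, I would invoke Theorem~\ref{thm:main} at $\alpha = 1$ to get $\eta^\lambda_1 = (-1)^{|\lambda|-\lambda_1} D^\lambda_1$. Finally, I would observe that $\alpha = 1$ collapses the defining sum of the Jack derangement number, namely
\[
D^\lambda_1 = \sum_{k=1}^{\lambda_1} d^\lambda_k\cdot 1^{\lambda_1-k} = \sum_{k=1}^{\lambda_1} d^\lambda_k = D^\lambda,
\]
so that $\eta^\lambda_1 = (-1)^{|\lambda|-\lambda_1} D^\lambda$ and consequently $d_\lambda = (-1)^{|\lambda|-\lambda_1} f^\lambda D^\lambda$, as claimed.

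There is no real obstacle here; the corollary is a direct repackaging of Theorem~\ref{thm:main} via the normalization between the Jack character $\theta^\lambda_1$ and the irreducible character $\chi^\lambda$. The only conceptual point worth flagging in the writeup is that this gives the combinatorial content of the corollary: $D^\lambda$ is the cardinality of the set $\mathcal{D}^\lambda$ of derangements among the colored permutations $\mathcal{S}_\lambda$ built from the first-column transpose data of $\lambda$, so the character sum $d_\lambda$ factors cleanly as a signed product of the dimension $f^\lambda$ and this combinatorial count, extending the hook-shape formulas of Stanley and Okazaki to arbitrary shapes.
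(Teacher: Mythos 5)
Your proof is correct and is exactly the derivation the paper intends: multiply the introduction's identity $\eta^\lambda_1 = d_\lambda/f^\lambda$ by $f^\lambda$, apply Theorem~\ref{thm:main} at $\alpha=1$, and note $D^\lambda_1 = \sum_k d^\lambda_k = D^\lambda$. Nothing further is needed.
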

\noindent This suggests a natural combinatorial interpretation of $|d_\lambda|$ in terms of standard Young tableaux $t$ of shape $\lambda$ and colored derangements $(c,\sigma) \in \mathcal{D}^\lambda$. Indeed, the set $\mathcal{D}^\lambda$ is in bijection with permutations $\sigma'$ defined on $\lambda_1$ cells of a fixed Young diagram $t$ of shape $\lambda$ that satisfy the following criteria. 
\begin{enumerate}
\item If $\sigma'(i) = j$, then the cells containing $i$ and $j$ belong to the same row of $t$.
\item No two cells involved in the permutation $\sigma'$ lie in the same column of $t$.
\item If $\sigma'(i) = i$, then the cell containing $i$ does not belong to the first row of $t$.
\end{enumerate}

We obtain the desired count by letting $t$ vary over all standard Young tableaux of shape $\lambda$. 
For $\lambda = 1^n$ this gives a notably different proof of the well-known identity
\[
	d_{1^n} = \sum_{\pi \in D_n} \text{sgn}(\pi) = \sum_{\pi \in D_n}(-1)^{\text{inv}(\pi)} = (-1)^{n-1} (n-1),
\]
i.e., that the number of odd derangements versus even derangements differ by $\pm (n-1)$. 

Recall that Theorem~\ref{thm:main} gives an expression for the Jack derangement numbers as a polynomial in $\alpha$ with non-negative coefficients
\[
	D^\lambda_\alpha = d^\lambda_1 \alpha^{\lambda_1-1} + d^\lambda_2 \alpha^{\lambda_1-2} + \cdots + d^\lambda_{\lambda_1}
\]
where $d^\lambda_k$ is the number of colored permutations of $\mathcal{D}^\lambda$ that have precisely $k$ disjoint cycles. One issue with this formula is that the $d^\lambda_k$'s are hard to compute for general shapes $\lambda$, as they are at least as difficult as the associated Stirling numbers of the first kind. Theorem~\ref{thm:dnk} offers a more concrete but less combinatorial form, which for arbitrary $\alpha$ seems to be as good as it gets; however, for $\alpha = 1,2$, we show that Theorem~\ref{thm:dnk} can be massaged into an explicit combinatorial closed form in terms of what we call \emph{extended hook products}. In addition, we recover Renteln's determinantal formula for $\eta^\lambda_1$~\cite[Theorem 4.2]{Renteln07} for $\alpha = 1$. Before we begin, we require a few simple but unconventional tableau-theoretic definitions. 

Let $\lambda^c$ be the \emph{complement} of $\lambda$, defined such that
$$\lambda^c := (\lambda_1 - \lambda_1, \lambda_1 - \lambda_2, \cdots, \lambda_1 - \lambda_{\ell(\lambda)}).$$
In other words, the complement of $\lambda$ is the subset of cells of the shape $(\lambda_1)^{\ell(\lambda)}$ that do not lie in $\lambda$. For $\lambda = (10,6,3,1)$, the complement $\lambda^c = (0,4,7,9)$ is the set of dots below:
\[
	\ytableausetup{smalltableaux}
	\begin{ytableau}
	~ & ~ & ~ & ~ & ~ & ~ & ~ & ~ & ~ &~ & \none[0] \\
	~ & ~  &~ &~ & ~ & ~ & \none[\circ] & \none[\circ]& \none[\circ]& \none[\circ] & \none[4]\\ 
	~ & ~ &~& \none[\circ]& \none[\circ]& \none[\circ]& \none[\circ]& \none[\circ]& \none[\circ]& \none[\circ]& \none[7] \\
	~ & \none[\circ]& \none[\circ]& \none[\circ]& \none[\circ]& \none[\circ]& \none[\circ]& \none[\circ]& \none[\circ]& \none[\circ] & \none[~9.]
	\end{ytableau}
	\ytableausetup{nosmalltableaux}
\]
Let $\text{rev}(\lambda^c)$ be the partition obtained by reversing the order of the rows of $\lambda^c$. We also let $\text{rev} : \lambda^c \rightarrow \text{rev}(\lambda^c)$ denote the natural bijection defined on their cells, e.g., 
\[
	\ytableausetup{smalltableaux}
	\text{rev}\left(
	\begin{ytableau}
	\none[~] & \none[~]  &\none[~] & \none[~] & \none[~] & \none[~] & \none[u] & \none[t]& \none[s]& \none[r]\\ 
	\none[~] & \none[~] & \none[~] & \none[q]& \none[p]& \none[o]& \none[n]& \none[m]& \none[l]& \none[k] \\
	\none[j] & \none[i]& \none[h]& \none[g]& \none[f]& \none[e]& \none[d]& \none[c]& \none[b]& \none[a]
	\end{ytableau}
	\right)
	\quad = \quad 
	\begin{ytableau}
	\none[a] & \none[b] & \none[c] & \none[d] & \none[e] & \none[f] & \none[g] & \none[h] & \none[i] & \none[j] \\
	\none[k] & \none[l]  &\none[m] & \none[n] & \none[o] & \none[p] & \none[q] \\ 
	\none[r] & \none[s] & \none[t] & \none[~u~] \\
	\none[~]
	\end{ytableau}~~.
	\ytableausetup{nosmalltableaux} 
\]
For any cell $\Box \in \lambda^c$, we define its \emph{upper hook length} to be $h_{\lambda^c}^*(\Box) =  h_{\text{rev}(\lambda^c)}^*(\text{rev}(\Box))$, and similarly for lower hook lengths. For example, we have the following upper hook lengths for $\alpha = 1$ and $\mu = (10,6,3,1)$:

\ytableausetup{mathmode, boxsize=1.5em}
\[
	\begin{ytableau}
	13 & 11 & 10 & 8 & 7 & 6 & 4 & 3 & 2 & 1 \\
	8 & 6  & 5 & 3 & 2 & 1 & \none[1] & \none[2]& \none[3]& \none[4] \\ 
	4 & 2 & 1 & \none[1]& \none[2]& \none[3]& \none[5]& \none[6]& \none[7]& \none[8] \\
	1 & \none[1]& \none[2]& \none[4]& \none[5]& \none[6]& \none[8]& \none[9]& \none[~10]&\none[~~11]
	\end{ytableau}~~.
\]
We define the \emph{extended ith principal upper hook product} as follows:
\[
	{H}_i^{+}(\lambda) :=  H_i^*(\lambda) H_{i}^*(\lambda^c).
\]
Continuing the example above, we see that ${H}_3^+(\mu) = 4 \cdot 2 \cdot 1 \cdot 8!/4 = 80640$. Note that ${H}_1^*(\lambda) = H_1^+(\lambda)$ for all $\lambda$ since $(\lambda^c)_1 = 0$.

Let $p(\lambda) = p_0,p_1, \ldots, p_{\lambda_1}$ be the sequence of the first $\lambda_1+1$ edges along the NE--SW lattice path induced by $\lambda$, e.g., 
\[
	\begin{ytableau}
	~ & ~ & ~ & ~ & ~ & ~ & ~ & ~ & ~ & ~~\downarrow \\
	~ & ~  &~ &~ & ~ & ~~\downarrow & \none[\raisebox{5pt}{$\leftarrow$}] & \none[\raisebox{5pt}{$\leftarrow$}]  &  \none[\raisebox{5pt}{$\leftarrow$}] &  \none[\raisebox{5pt}{$\leftarrow$}] \\ 
	~ & ~ & ~~\downarrow &  \none[\raisebox{5pt}{$\leftarrow$}]  &  \none[\raisebox{5pt}{$\leftarrow$}] &  \none[\raisebox{5pt}{$\leftarrow$}]  \\
	~ & \none[~]&  \none[\raisebox{5pt}{$\leftarrow$} ]
	\end{ytableau}~~ .
\]
Let $\nu(\lambda) := \nu_1,\nu_2 ,\ldots,\nu_l$ be the indices of the subsequence of vertical edges of $p$. It is not difficult to see that
$$
\nu(\lambda) = ( \lambda_1 - \lambda_i + i - 1 : i - 1  \leq \lambda_i).
$$
Continuing the example, we have $\nu(\mu) = 0,5,9$. Note that $\nu(\lambda)_1 = 0$ for all $\lambda$. 

Recall from Section~\ref{sec:main} that $d_{n,k}$ is the \emph{$k$th rencontres number}, i.e., the number of permutations of $S_n$ with precisely $k$ fixed points. Let $p_{n,k} = d_{n,k}/n!$ be the probability of drawing a permutation (uniformly at random) from $S_n$ with precisely $k$ fixed points. The \emph{Frobenius coordinates} of $\lambda$ are given by $\lambda = (a_1,\ldots,a_d~|~b_1,\ldots,b_d)$ where $a_i := \lambda_i - i$ is the number of boxes to the right of the diagonal in row $i$, and $b_i := \lambda_i' -i$ is the number of boxes below the diagonal in column $i$. By default, we define $a_{d+1} := -1$. We are now ready to give a nice closed form for the eigenvalues of the permutation derangement graph $\Gamma_{n,1}$.

\begin{figure}
\tiny
	\[
	\ytableausetup{mathmode, boxsize=3em}
	\begin{ytableau}
	4\!-\!\alpha & 3\!-\!2\alpha & 3\!-\!3\alpha & 2\!-\!4\alpha & 2\!-\!5\alpha & 2\!-\!6\alpha & 1\!-\!7\alpha & 1\!-\!8\alpha & 1\!-\!9\alpha & 1\!-\!10\alpha & \none[~j=10]\\
	*(cyan)4 & *(cyan) 3\!-\!\alpha & *(cyan) 3\!-\!2\alpha & 2\!-\!3\alpha & 2\!-\!4\alpha & 2\!-\!5\alpha & 1\!-\!6\alpha & 1\!-\!7\alpha & 1\!-\!8\alpha & 1\!-\!9\alpha & \none[j=9]\\
	4\!+\!\alpha & 3 & 3\!-\!\alpha & 2\!-\!2\alpha & 2\!-\!3\alpha & 2\!-\!4\alpha & 1\!-\!5\alpha & 1\!-\!6\alpha & 1\!-\!7\alpha & 1\!-\!8\alpha & \none[j=8]\\
	4\!+\!2\alpha & 3\!+\!\alpha & 3 & 2\!-\!\alpha & 2\!-\!2\alpha & 2\!-\!3\alpha & 1\!-\!4\alpha & 1\!-\!5\alpha & 1\!-\!6\alpha & 1\!-\!7\alpha & \none[j=7]\\
	4\!+\!3\alpha &  3\!+\!2\alpha & 3\!+\!\alpha & 2 & 2\!-\!\alpha & 2\!-\!2\alpha & 1\!-\!3\alpha & 1\!-\!4\alpha & 1\!-\!5\alpha & 1\!-\!6\alpha & \none[j=6]\\
	*(green) 4\!+\!4\alpha & *(green) 3\!+\!3\alpha &  *(green) 3\!+\!2\alpha & *(green) 2\!+\!\alpha & *(green) 2 & *(green) 2\!-\!\alpha & 1\!-\!2\alpha & 1\!-\!3\alpha & 1\!-\!4\alpha & 1\!-\!5\alpha & \none[j=5]\\
	4\!+\!5\alpha & 3\!+\!4\alpha & 3\!+\!3\alpha &  2\!+\!2\alpha & 2\!+\!\alpha & 2 & 1\!-\!\alpha & 1\!-\!2\alpha & 1\!-\!3\alpha & 1\!-\!4\alpha & \none[j=4]\\
	4\!+\!6\alpha & 3\!+\!5\alpha & 3\!+\!4\alpha & 2\!+\!3\alpha &  2\!+\!2\alpha & 2\!+\!\alpha & 1 & 1\!-\!\alpha & 1\!-\!2\alpha & 1\!-\!3\alpha & \none[j=3]\\
	4\!+\!7\alpha & 3\!+\!6\alpha & 3\!+\!5\alpha & 2\!+\!4\alpha & 2\!+\!3\alpha &  2\!+\!2\alpha & 1\!+\!\alpha & 1 & 1\!-\!\alpha & 1\!-\!2\alpha & \none[j=2]\\
	4\!+\!8\alpha & 3\!+\!7\alpha & 3\!+\!6\alpha &  2\!+\!5\alpha & 2\!+\!4\alpha & 2\!+\!3\alpha &  1\!+\!2\alpha & 1\!+\!\alpha & 1 & 1\!-\!\alpha & \none[j=1]\\
	*(yellow) 4\!+\!9\alpha & *(yellow) 3\!+\!8\alpha & *(yellow) 3\!+\!7\alpha & *(yellow) 2\!+\!6\alpha & *(yellow) 2\!+\!5\alpha & *(yellow) 2\!+\!4\alpha & *(yellow) 1\!+\!3\alpha & *(yellow) 1\!+\!2\alpha & *(yellow) 1\!+\!\alpha & *(yellow) 1 & \none[j=0]\\
	*(green) 3\!+\!5\alpha & *(green) 2\!+\!4\alpha  & *(green) 2\!+\!3\alpha & *(green) 1\!+\!2\alpha & *(green) 1\!+\!\alpha & *(green) 1 \\ 
	*(cyan) 2\!+\!2\alpha & *(cyan)1\!+\!\alpha &*(cyan) 1 \\
	*(pink) 1  \\
	\end{ytableau}
	\]
		\normalsize
	\caption{The shifted principal lower hook products for $\lambda = (10,6,3,1)$. Row $j$ shows the product $H^1_*(\lambda,j)$. 
For $\alpha = 1$, the product of the hook lengths along any uncolored row is 0, the product of the hook lengths along row 2 of $\lambda$ equals the product of the green cells in the $j=5$ row, and the product of the hook lengths along row 3 of $\lambda$ equals the product of the blue cells in the $j=9$ row. 
Theorem~\ref{thm:eigen1} shows $\eta_1^\lambda = p_{10,0} [13!/(12 \cdot 9 \cdot 5)] + p_{10,5} [8!4!/(7 \cdot 4)] - p_{10,9}H_3^+(\lambda) = 4242315$ (note $p_{10,9}=0$). For $\alpha = 2$, we have $H^1_*(\lambda,j) = 0$ for $j = 5,6,7$.
}\label{fig:board}
\end{figure}

\begin{theorem}[Eigenvalues of $\Gamma_{n,1}$]\label{thm:eigen1} For all $\lambda = (\lambda_1,\ldots, \lambda_\ell) = (a_1,\ldots,a_d~|~b_1,\ldots,b_d) \vdash n$, we have
	\begin{align*}
		\eta^\lambda_1 
		&= (-1)^{n} \sum_{i \leq \lambda_i + 1} (-1)^{\lambda_i}  p_{\lambda_1, a_1 - a_i} ~H_{i}^+(\lambda)
	\end{align*}
\end{theorem}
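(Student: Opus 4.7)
The plan is to invoke Theorem~\ref{thm:dnk} at $\alpha = 1$ with $n = \lambda_1$, which yields
\[
\eta_1^\lambda = (-1)^{|\lambda|-\lambda_1} \sum_{j=0}^{\lambda_1} p_{\lambda_1,j}\, \mathbf{H}^1_*(\lambda, j).
\]
At $\alpha = 1$, $\mathbf{H}^1_*(\lambda, j) = \prod_{k=1}^{\lambda_1}(h_k - j)$ where $h_k = \lambda_1 - k + \lambda'_k$ is the strictly decreasing sequence of first-row hook lengths of $\lambda$. Thus the sum collapses to those $j \in \{0,\ldots,\lambda_1\}$ that are not first-row hook lengths. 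A short border-path count identifies the $h_k$'s with the horizontal-step positions of $p(\lambda)$, so the surviving $j$'s are precisely the vertical-step positions $\nu(\lambda) = \{\nu_i : i - 1 \leq \lambda_i\}$ with $\nu_i = \lambda_1 - \lambda_i + i - 1 = a_1 - a_i$, matching the theorem's index set.

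The heart of the argument is the factorization
\[
\mathbf{H}^1_*(\lambda, \nu_i) = (-1)^{\lambda_1 - \lambda_i}\, H_i^*(\lambda)\, H_i^*(\lambda^c) = (-1)^{\lambda_1 - \lambda_i}\, H_i^+(\lambda).
\]
Direct algebra gives $h_k - \nu_i = \lambda_i - k + \lambda'_k - i + 1$, which is the hook length $h_\lambda(i,k)$ at the cell $(i,k)$ of the bounding rectangle $(\lambda_1)^{\ell(\lambda)}$. For $1 \leq k \leq \lambda_i$ the cell lies in $\lambda$ and the factor is positive, and these positive factors multiply to precisely $H_i^*(\lambda)$. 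For $\lambda_i < k \leq \lambda_1$ the cell lies in $\lambda^c$ and the factor is negative; one checks via an explicit arm/leg computation that $|h_k - \nu_i|$ coincides with the hook length of the $180^\circ$-rotated image $(\ell(\lambda) + 1 - i, \lambda_1 + 1 - k) \in \text{rev}(\lambda^c)$, so the absolute values of these negative factors multiply to $H_i^*(\lambda^c)$. There are $\lambda_1 - \lambda_i$ negative factors, giving the claimed factorization.

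Substituting back and simplifying the sign as $(-1)^{|\lambda|-\lambda_1}(-1)^{\lambda_1 - \lambda_i} = (-1)^{n}(-1)^{\lambda_i}$ yields the theorem. I expect the main obstacle to be the arm/leg identification in $\text{rev}(\lambda^c)$: one must check that the column length $(\text{rev}(\lambda^c))'_{\lambda_1+1-k}$ simplifies to $\ell(\lambda) - \lambda'_k$, which requires careful bookkeeping through the definitions of $\lambda^c$ and the row-reversal operation. As a sanity check one should also verify that $\nu_i \neq h_k$ for all $i,k$, so that no factor vanishes: the equality $\nu_i = h_k$ forces $\lambda'_k + \lambda_i = i + k - 1$, which is inconsistent whether $(i,k)$ lies in $\lambda$ or in $\lambda^c$, and a direct count gives $|\nu(\lambda)| + |\{h_k \leq \lambda_1\}| = \lambda_1 + 1$, confirming the partition of $\{0, \ldots, \lambda_1\}$.
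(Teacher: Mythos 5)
Your proposal is correct and follows essentially the same route as the paper: apply Theorem~\ref{thm:dnk} at $\alpha=1$ with $n=\lambda_1$, observe that $\prod_k(h_k-j)$ vanishes unless $j\in\nu(\lambda)$, and identify the surviving products with $(-1)^{\lambda_1-\lambda_i}H_i^+(\lambda)$. The only difference is that you carry out in full the arm/leg bookkeeping for $\mathrm{rev}(\lambda^c)$ that the paper asserts without derivation, and your computations there check out.
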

\begin{proof}
The product $\prod_{k=1}^{\lambda_1}(h_k - i)$ vanishes if $h_k = i$ for some $k$, which happens if and only if $i \notin \nu(\lambda)$ (see Figure~\ref{fig:board} for an illustration). Otherwise, we have $H_i^+(\lambda) = |\prod_{k=1}^{\lambda_1}(h_k - i)|$ and $\text{sgn}~\prod_{k=1}^{\lambda_1}(h_k - i) = (-1)^{\lambda_1 - \lambda_i}$. The proof now follows from Theorem~\ref{thm:dnk}.
\end{proof}

\begin{corollary}\emph{\cite{DengZ11}} For all two-row shapes $\lambda = (n-k,k)$, we have
\[
		\eta^\lambda_1 
		=  \frac{  (-1)^{k} d_{n-k+1,1} + (-1)^{n-k} d_{k,1} }{(n-2k+1)}.
\]
\end{corollary}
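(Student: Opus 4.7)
The plan is to specialize Theorem~\ref{thm:eigen1} to $\lambda=(n-k,k)$ and simplify. For any two-row partition the condition $i\le\lambda_i+1$ is met only for $i=1,2$, so the outer sum collapses to two terms. The Frobenius coordinates (with the convention $a_{d+1}:=-1$ covering the case $k=1$) give $a_1=n-k-1$ and $a_2=k-2$, so the two exponents on the rencontres probabilities become $a_1-a_1=0$ and $a_1-a_2=n-2k+1$. With the signs from Theorem~\ref{thm:eigen1}, the identity to prove reduces to
\[
\eta_1^{\lambda}=(-1)^{k}\,p_{n-k,0}\,H_1^{+}(\lambda)\;+\;(-1)^{n-k}\,p_{n-k,\,n-2k+1}\,H_2^{+}(\lambda).
\]

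The main computational step is evaluating $H_1^{+}(\lambda)$ and $H_2^{+}(\lambda)$ at $\alpha=1$, where upper and lower hook lengths coincide with classical hook lengths. Row~1 of $\lambda$ splits at column $k$: for $j\le k$ the hook is $n-k-j+2$ and for $j>k$ it is $n-k-j+1$. The two telescoping products multiply to $H_1^{*}(\lambda)=(n-k+1)!/(n-2k+1)$. Row~2 is simply a one-row shape of length $k$, giving $H_2^{*}(\lambda)=k!$. The complement $\lambda^{c}=(0,n-2k)$ is empty in its first row and reverses to the single row $(n-2k)$ in its second, so $H_1^{*}(\lambda^{c})=1$ and $H_2^{*}(\lambda^{c})=(n-2k)!$. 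Therefore
\[
H_1^{+}(\lambda)=\frac{(n-k+1)!}{n-2k+1},\qquad H_2^{+}(\lambda)=k!\,(n-2k)!.
\]

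Substituting into the displayed expression and expanding $p_{m,j}=d_{m,j}/m!$ together with the standard identity $d_{m,j}=\binom{m}{j}d_{m-j}$ (writing $d_{r}$ for the number of derangements of $[r]$), the $i=1$ summand simplifies to $(-1)^{k}(n-k+1)d_{n-k}/(n-2k+1)$ and the $i=2$ summand to $(-1)^{n-k}k\,d_{k-1}/(n-2k+1)$, after cancelling the binomial factor $\binom{n-k}{k-1}$ against $k!\,(n-2k)!/(n-k)!$. Finally, invoking $d_{m,1}=m\cdot d_{m-1}$ on each summand yields $(-1)^{k}d_{n-k+1,1}/(n-2k+1)$ and $(-1)^{n-k}d_{k,1}/(n-2k+1)$, and their sum is exactly the claimed expression.

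No serious obstacle is anticipated, since the corollary is essentially a direct substitution into Theorem~\ref{thm:eigen1}. The only places that require mild care are the sign bookkeeping (using $(-1)^{n}(-1)^{n-k}=(-1)^{k}$ and $(-1)^{n}(-1)^{k}=(-1)^{n-k}$) and the interpretation of $\lambda^{c}$ under the reversing map when one row of $\lambda^{c}$ is empty; in the two-row case both are handled by the conventions already set up in Section~\ref{sec:eigs}.
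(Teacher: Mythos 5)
Your proposal is correct and follows essentially the same route as the paper: specialize Theorem~\ref{thm:eigen1} to the two-row shape, compute $H_1^{+}(\lambda)=(n-k+1)!/(n-2k+1)$ and $H_2^{+}(\lambda)=k!\,(n-2k)!$, and reduce via the rencontres identities $d_{m,j}=\binom{m}{j}d_{m-j,0}$ and $d_{m,1}=m\,d_{m-1,0}$. The sign bookkeeping and the evaluation of the hook products match the paper's computation step for step.
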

\begin{proof} 
\begin{align*}
	\eta^{(n-k,k)}_1 &= \frac{1}{(n-k)!} \left(  (-1)^{k} d_{n-k,0} H_1(\lambda) + (-1)^{n-k} d_{n-k,n-2k+1} H_2^+ (\lambda)\right)\\
	 &=  \frac{1}{(n-k)!} \left(  (-1)^k d_{n-k,0} \frac{(n-k+1)!}{(n-2k+1)} + (-1)^{n-k}d_{n-k,n-2k+1} k!(n-2k)!  \right)\\
	 &=   (-1)^k d_{n-k,0} \frac{(n-k+1)}{(n-2k+1)} +  (-1)^{n-k} \frac{d_{n-k,n-2k+1}}{\binom{n-k}{k}} \\
	 &=  (-1)^k \frac{d_{n-k+1,1}}{(n-2k+1)} +  (-1)^{n-k} \frac{\binom{n-k}{k-1}d_{k-1,0}}{\binom{n-k}{k}} \\
	  &= \frac{ (-1)^k d_{n-k+1,1}+  (-1)^{n-k} d_{k,1} }{(n-2k+1)}.
\end{align*}
\end{proof}
\noindent For ease of notation, let $d_{n,k}' := k!d_{n,k}$ (c.f.~\emph{the shifted derangement number}~\cite[\S 4]{Renteln07}).
\begin{corollary} For all three-row shapes $\lambda = (\lambda_1,\lambda_2,\lambda_3)$, we have
\[
		\eta^{\lambda} 
		=   \frac{  (-1)^{|\lambda|-\lambda_1} d'_{\lambda_1+2,2}}{(\lambda_1-\lambda_3+2)(\lambda_1-\lambda_2+1)} +
		 \frac{  (-1)^{|\lambda|-\lambda_2} d'_{\lambda_2+1,2}}{(\lambda_1-\lambda_2+1)(\lambda_2-\lambda_3+1)}  +
		  \frac{ (-1)^{|\lambda|-\lambda_3} d'_{\lambda_3,2}}{(\lambda_1-\lambda_3+2)(\lambda_2-\lambda_3+1)} .
\]
\end{corollary}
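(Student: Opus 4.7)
The plan is to specialize Theorem~\ref{thm:eigen1} to a three-row shape. For $\lambda=(\lambda_1,\lambda_2,\lambda_3)$, the indices $i$ satisfying $i\leq\lambda_i+1$ are $\{1,2,3\}$ whenever $\lambda_3\geq 2$; in the boundary case $\lambda_3\leq 1$ the index $i=3$ is excluded, but the would-be third term in the claimed formula vanishes automatically because $d'_{\lambda_3,2}=0$. So it suffices to compute, for $i=1,2,3$, the three summands of Theorem~\ref{thm:eigen1}. The Frobenius coordinates immediately give $a_1-a_i=\lambda_1-\lambda_i+i-1$, which pins down the probability factor $p_{\lambda_1,\,\lambda_1-\lambda_i+i-1}$.

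The bulk of the work is computing the three extended principal upper hook products $H_i^+(\lambda)=H_i^*(\lambda)H_i^*(\lambda^c)$. Inspecting the hooks along each row of $\lambda$ shows they form a decreasing run of consecutive integers with a handful of gaps coming from the shorter rows below, so $H_i^*(\lambda)$ is the ratio of a factorial by the omitted factors:
\[
H_1^*(\lambda)=\frac{(\lambda_1+2)!}{(\lambda_1-\lambda_2+1)(\lambda_1-\lambda_3+2)},\quad H_2^*(\lambda)=\frac{(\lambda_2+1)!}{\lambda_2-\lambda_3+1},\quad H_3^*(\lambda)=\lambda_3!.
\]
For $H_i^*(\lambda^c)$, the bijection $\mathrm{rev}$ reduces the computation to hook lengths in the two-row shape $\mathrm{rev}(\lambda^c)=(\lambda_1-\lambda_3,\lambda_1-\lambda_2)$, and an analogous identification of the missing factors gives $H_1^*(\lambda^c)=1$, $H_2^*(\lambda^c)=(\lambda_1-\lambda_2)!$, and $H_3^*(\lambda^c)=(\lambda_1-\lambda_3+1)!/(\lambda_2-\lambda_3+1)$.

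Assembling each summand and using the identities $d_{n,k}=\binom{n}{k}d_{n-k,0}$ and $d'_{n,2}=n(n-1)d_{n-2,0}$, the factorials and binomial coefficients cancel so that the $i$-th summand reduces to $(-1)^{|\lambda|-\lambda_i}$ times $d'_{\lambda_i-i+3,2}$ divided by the two omitted hook factors, matching the three terms in the statement exactly. The only real obstacle is combinatorial bookkeeping: correctly identifying the missing hook values in each row of $\lambda$ and of $\mathrm{rev}(\lambda^c)$; after that, everything reduces to routine factorial cancellation.
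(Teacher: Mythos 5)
Your proposal is correct and follows essentially the same route as the paper: specialize Theorem~\ref{thm:eigen1} (equivalently Theorem~\ref{thm:dnk}) to three rows, compute $H_1^+(\lambda)=(\lambda_1+2)!/((\lambda_1-\lambda_2+1)(\lambda_1-\lambda_3+2))$, $H_2^+(\lambda)=(\lambda_2+1)!(\lambda_1-\lambda_2)!/(\lambda_2-\lambda_3+1)$, $H_3^+(\lambda)=\lambda_3!(\lambda_1-\lambda_3+1)!/(\lambda_2-\lambda_3+1)$, and cancel factorials via $d_{n,k}=\binom{n}{k}d_{n-k,0}$ and $d'_{n,2}=n(n-1)d_{n-2,0}$. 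Your explicit treatment of the boundary case $\lambda_3\leq 1$ (where the $i=3$ term is excluded by $i\leq\lambda_i+1$ but $d'_{\lambda_3,2}=0$ anyway) is a detail the paper leaves implicit.
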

\begin{proof}
\begin{align*}
	\eta^{\lambda}_1 &= \frac{(-1)^{|\lambda|}}{\lambda_1!} \left(  (-1)^{\lambda_1} d_{\lambda_1,0} H_1(\lambda) + (-1)^{\lambda_2} d_{\lambda_1,\lambda_1 - \lambda_2+1} H_2^+ (\lambda) + (-1)^{\lambda_3} d_{\lambda_1,\lambda_1 - \lambda_3+2} H_3^+ (\lambda)\right)\\
	 &=   \frac{(-1)^{|\lambda|-\lambda_1} d_{\lambda_1,0}(\lambda_1 + 2)!}{\lambda_1!(\lambda_1-\lambda_2+1)(\lambda_1-\lambda_3+2)} +  \frac{(-1)^{|\lambda|- \lambda_2} d_{\lambda_1,\lambda_1 - \lambda_2+1} (\lambda_2+1)!(\lambda_1-\lambda_2)!}{\lambda_1!(\lambda_2-\lambda_3+1)}  \\
	 &\quad \quad \quad \quad +  \frac{(-1)^{|\lambda|- \lambda_3} d_{\lambda_1,\lambda_1 - \lambda_3+2} \lambda_3!(\lambda_1-\lambda_3+1)!}{\lambda_1!(\lambda_2-\lambda_3+1)}  \\
	 &=   \frac{(-1)^{|\lambda|-\lambda_1} d_{\lambda_1+2,2}}{(\lambda_1-\lambda_2+1)(\lambda_1-\lambda_3+2)} +   \frac{  (-1)^{|\lambda|-\lambda_2} d'_{\lambda_2+1,2}}{(\lambda_1-\lambda_2+1)(\lambda_2-\lambda_3+1)}  \\
	 &\quad \quad \quad \quad +   \frac{ (-1)^{|\lambda|-\lambda_3} d'_{\lambda_3,2}}{(\lambda_1-\lambda_3+2)(\lambda_2-\lambda_3+1)}.
\end{align*}
\end{proof}
\noindent Continuing in this manner, the expression above becomes exceedingly more cumbersome to explicitly write down for partitions with more parts; however, it does suggest a compact expression as a determinant. Let $\ell := \ell(\lambda)$ and define the following $\ell \times \ell$ matrices:
\[
	W(\lambda) := \begin{bmatrix} 
	(-1)^{\lambda_1 - \lambda_1 + 1 - 1} d'_{\lambda_1 + \ell - 1, \ell-1} & (\lambda_{1} - 1)^{\ell-2} & (\lambda_{1} - 1)^{\ell-3} & \cdots & 1\\
	(-1)^{\lambda_1 - \lambda_2 + 2 - 1} d'_{\lambda_2 + \ell - 2, \ell-1} & (\lambda_{2} - 2)^{\ell-2} & (\lambda_{2} - 2)^{\ell-3} & \cdots & 1\\
	(-1)^{\lambda_1 - \lambda_3 + 3 - 1} d'_{\lambda_3 + \ell - 3, \ell-1} & (\lambda_{3} - 3)^{\ell-2} & (\lambda_{3} - 3)^{\ell-3} & \cdots & 1\\
	\vdots & \vdots & \vdots & \ddots &~
	\end{bmatrix},
\]
and $V(\lambda) := ((\lambda_{i} - i)^{j-1})_{i,j=1}^{\ell}$.
Clearly $V(\lambda)$ is Vandermonde in the variables  $x_i = \lambda_i - i$, and any submatrix of $W(\lambda)$ obtained by removing the first column and then removing any row is also Vandermonde. We are now ready to show that $\eta^\lambda_1$ is a determinant. 
\begin{theorem}\emph{\cite[Theorem 4.2]{Renteln07}} For all shapes $\lambda = (\lambda_1,\lambda_2,\ldots, \lambda_\ell)$, we have
\[
		\eta^\lambda_1 
		=   \det W(\lambda)V(\lambda)^{-1}
\]
\end{theorem}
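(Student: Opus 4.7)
The plan is to prove Renteln's determinantal formula by expanding $\det W(\lambda)$ along its first column, identifying each cofactor as a Vandermonde in the arm coordinates $a_i := \lambda_i - i$, and then matching the resulting sum term-by-term with the closed form established in Theorem~\ref{thm:eigen1}.

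First I would write $\det W(\lambda) = \sum_{i=1}^\ell (-1)^{i+1} W_{i,1} M_{i,1}$. The $(i,1)$-minor $M_{i,1}$ has rows $(a_k^{\ell-2}, a_k^{\ell-3}, \ldots, 1)$ for $k \neq i$; reversing the $\ell-1$ columns (introducing a sign $(-1)^{\binom{\ell-1}{2}}$) yields a standard Vandermonde in $\{a_k : k \neq i\}$ whose determinant is $\prod_{j<k,\, j,k \neq i}(a_k - a_j)$. Dividing by $\det V(\lambda) = \prod_{j<k}(a_k - a_j)$ leaves $M_{i,1}/\det V(\lambda) = \pm 1/\prod_{j \neq i}|a_i - a_j|$, with the sign depending only on $i$ and $\ell$. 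After substituting $W_{i,1} = (-1)^{a_1 - a_i}\, d'_{\lambda_i + \ell - i, \ell - 1}$ and collecting signs, the quotient takes the shape
\[
\frac{\det W(\lambda)}{\det V(\lambda)} \;=\; c_\ell \sum_{i=1}^\ell \frac{(-1)^{a_1 - a_i}\, d'_{\lambda_i + \ell - i, \ell - 1}}{\prod_{j \neq i} |a_i - a_j|}
\]
for a global sign $c_\ell$ depending only on $\ell$.

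Next I would compare this sum term-by-term with the closed form from Theorem~\ref{thm:eigen1}: unpacking $p_{\lambda_1, k} = d_{\lambda_1, k}/\lambda_1!$, that formula becomes
\[
\eta^\lambda_1 \;=\; \frac{(-1)^n}{\lambda_1!} \sum_{i=1}^\ell (-1)^{\lambda_i}\, d_{\lambda_1, a_1 - a_i}\, H^+_i(\lambda),
\]
where rows with $i > \lambda_i + 1$ contribute zero in both formulas because the corresponding $d$- or $d'$-factor vanishes. Using the factorizations $d'_{\lambda_i + \ell - i, \ell-1} = (\ell-1)!\binom{\lambda_i + \ell - i}{\ell-1} D_{\lambda_i - i + 1}$ and $d_{\lambda_1, a_1 - a_i} = \binom{\lambda_1}{a_1 - a_i} D_{\lambda_i - i + 1}$, cancelling the common $D_{\lambda_i - i + 1}$ reduces the term-by-term matching (up to the sign $c_\ell(-1)^n$) to the hook identity
\[
H^+_i(\lambda) \prod_{j \neq i}|a_j - a_i| \;=\; (a_i + \ell)!\, (a_1 - a_i)!.
\]

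The main obstacle is establishing this hook identity, which I would split into two sub-identities,
\[
H^*_i(\lambda) \prod_{j > i}|a_i - a_j| = (a_i + \ell)!, \qquad H^*_i(\lambda^c)\prod_{j < i}|a_j - a_i| = (a_1 - a_i)!.
\]
The first is the classical $\beta$-number statement: setting $\beta_j := a_j + \ell$, the row-$i$ hook lengths $\{h_\lambda(i,k)\}_{k=1}^{\lambda_i}$ together with the shifted differences $\{\beta_i - \beta_j = a_i - a_j : j > i\}$ partition the interval $\{1, 2, \ldots, \beta_i\}$, so their product is $\beta_i! = (a_i + \ell)!$. The second follows from applying the first sub-identity to the partition $\tilde{\lambda} := \text{rev}(\lambda^c)$ and then multiplying by the ``shift factor'' $\prod_{j \leq m}(a_j - a_i)$, where $m$ is the number of rows of $\lambda$ equal to $\lambda_1$; this factor converts the $(a_1 - a_i - m)!$ produced by applying the first sub-identity to $\tilde{\lambda}$ into $(a_1 - a_i)!$, while on the left-hand side it precisely accounts for the indices $j \leq m$ that are absent from the sub-identity applied to $\tilde{\lambda}$. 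Once both sub-identities are combined, a routine verification that the global sign $c_\ell$ agrees with $(-1)^n$ finishes the proof.
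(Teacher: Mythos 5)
Your proposal is correct and follows essentially the same route as the paper's proof: both arguments hinge on the Laplace expansion of $\det W(\lambda)$ along its first column with Vandermonde cofactors, the factorization $d_{n,k}=\binom{n}{k}d_{n-k,0}$ of the rencontres numbers, and the hook identity $H^+_i(\lambda)\prod_{j\neq i}|a_i-a_j|=(a_i+\ell)!\,(a_1-a_i)!$, which the paper justifies by the same $\beta$-number bookkeeping you make explicit via the two sub-identities for $H^*_i(\lambda)$ and $H^*_i(\lambda^c)$. The only difference is cosmetic: you run the computation from $\det W(\lambda)V(\lambda)^{-1}$ toward Theorem~\ref{thm:eigen1}, whereas the paper starts from Theorem~\ref{thm:dnk} and recognizes the resulting sum as the cofactor expansion.
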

\begin{proof}
First, we claim that $H_+^k(\lambda)$ can be written as a scaled ratio of Vandermonde determinants, i.e.,
\begin{align*}
	H^k_+(\lambda) &= (\lambda_k - k + \ell)!(\lambda_1 - \lambda_k + k - 1)! \frac{\prod_{\substack{i < j \\ i,j \neq k}} (\lambda_i - \lambda_j + j-i)}{\prod_{i < j} (\lambda_i - \lambda_j + j-i)}\\ 
	&= \frac{(\lambda_k - k + \ell)!(\lambda_1 - \lambda_k + k - 1)! }{\prod_{\!\!\!\! \substack{i < j \\ i = k \text{ or } j = k}} (\lambda_i - \lambda_j + j-i)}.
\end{align*}
It is clear that the $k$th extended principal hook product of any shape cannot be larger than $(\lambda_k - k + \ell)!(\lambda_1 - \lambda_k + k - 1)!$. Here, we think of $(\lambda_k - k + \ell)!$ as representing all possible hook lengths in the cells of $\lambda_k$, i.e., $(\lambda_k - k + \ell)! \leq H_k^*(\lambda)$, and $(\lambda_1 - \lambda_k + k - 1)!$ as representing all possible hook lengths in the non-cells to the right of $\lambda_k$, i.e., $(\lambda_1 - \lambda_k + k - 1)! \leq H_k^*(\lambda^c)$. The denominator corrects for the hook lengths that do not appear in $H_k^*(\lambda)$ and $H_k^*(\lambda^c)$. Indeed, when $i < j = k$, the values $(\lambda_i - \lambda_k + k - i)$ are the only hook lengths that do not appear in $H_k^+(\lambda)$ for all $i$. Similarly, when $k = i  < j$, the values $(\lambda_k - \lambda_j + j - k)$ are the only hook lengths that do not appear in $H_k^+(\lambda^c)$ for all $j$, which proves the claim.

By Theorem~\ref{thm:dnk}, we have
\begin{align*}
	\eta^\lambda_1 &= \frac{(-1)^{\lambda_1}}{\lambda_1!} \sum_{k \leq \lambda_k + 1}(-1)^{\lambda_k}  d_{\lambda_1, \lambda_1 - \lambda_k + k - 1} ~\frac{(\lambda_k - k + \ell)!(\lambda_1 - \lambda_k + k - 1)! }{\prod_{\substack{i < j \\ i = k \text{ or } j = k}} (\lambda_i - \lambda_j + j-i)}\\
	&= \frac{(-1)^{\lambda_1}}{\lambda_1!} \sum_{k \leq \lambda_k + 1}(-1)^{\lambda_k} \binom{\lambda_1}{\lambda_1 - \lambda_k + k -1} d_{\lambda_k - k + 1, 0} ~\frac{(\lambda_k - k + \ell)!(\lambda_1 - \lambda_k + k - 1)! }{\prod_{\substack{i < j \\ i = k \text{ or } j = k}} (\lambda_i - \lambda_j + j-i)}\\
	&= (-1)^{\lambda_1} \sum_{k \leq \lambda_k + 1}(-1)^{\lambda_k} \frac{d_{\lambda_k - k + 1, 0}}{(\lambda_k - k +1)!}  ~\frac{(\lambda_k - k + \ell)!}{\prod_{\substack{i < j \\ i = k \text{ or } j = k}} (\lambda_i - \lambda_j + j-i)}\\
	&= (-1)^{\lambda_1} \sum_{k \leq \lambda_k + 1}(-1)^{\lambda_k} \binom{\lambda_k - k + \ell}{\ell-1} (\ell-1)!~\frac{d_{\lambda_k - k + 1, 0} }{\prod_{\substack{i < j \\ i = k \text{ or } j = k}} (\lambda_i - \lambda_j + j-i)}\\
	&= \sum_{k \leq \lambda_k + 1}(-1)^{k} ~\left((-1)^{\lambda_1 - \lambda_k + k} d_{\lambda_k - k + \ell, \ell-1}' \right)\left( \frac{1}{\prod_{\substack{i < j \\ i = k \text{ or } j = k}} (\lambda_i - \lambda_j + j-i)} \right).\\
	\intertext{Let $W(\lambda)^{k,1}$ be the submatrix of $W(\lambda)$ obtained by removing the first column and $k$th row. Then we have }
	&= \frac{1}{\det V(\lambda)} \sum_{k \leq \lambda_k + 1}(-1)^{k+1} ~\left((-1)^{\lambda_1 - \lambda_k + k - 1} d_{\lambda_k - k + \ell, \ell-1}' \right)~\det W(\lambda)^{k,1}.\\
	\intertext{Since $W(\lambda)_{k,1} = (-1)^{\lambda_1 - \lambda_k + k - 1} d_{\lambda_k - k + \ell, \ell-1}' $, the summation is simply the Laplace expansion of $W(\lambda)$ along the first column, that is,}
	&= \frac{\det W(\lambda)}{\det V(\lambda)} = \det W(\lambda) V(\lambda)^{-1},
\end{align*}
which completes the proof. 
\end{proof}

\noindent A standard result in the theory of symmetric functions is that $f^\lambda$ can be expressed as a determinant via the Jacobi--Trudi identity (see~\cite[Cor.~7.16.3]{StanleyV201}), which gives a determinantal expression for~\cite[Ex.~7.63a]{StanleyV201}.
\begin{corollary}\emph{\cite{Renteln07}} For all shapes  $\lambda = (\lambda_1, \lambda_2, \ldots,\lambda_\ell)$, we have 
$$d_\lambda = |\lambda| !~ \det \left( \frac{1}{\lambda_i - i + j} \right)_{i,j=1}^\ell ~ \det W(\lambda) V(\lambda)^{-1} .$$
\end{corollary}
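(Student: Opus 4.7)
The plan is to assemble the statement from two pieces that have already been established in the excerpt. The first piece is the identity $d_\lambda = f^\lambda \, \eta_1^\lambda$, which follows from the relation $\eta_1^\lambda = d_\lambda / f^\lambda$ recorded in Section~\ref{sec:intro} (recall $\eta_1^\lambda$ is the Frobenius normalization $\chi^\lambda(\mu)/\chi^\lambda(1)$ summed against class sizes, and $f^\lambda = \chi^\lambda(1)$). The second piece is the just-proved Renteln determinantal formula $\eta_1^\lambda = \det W(\lambda) V(\lambda)^{-1}$. It therefore suffices to write $f^\lambda$ as a determinant of the required form.

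The standard tool for the latter is the Jacobi--Trudi identity, which expresses the Schur function $s_\lambda$ as
\[
s_\lambda = \det\!\left( h_{\lambda_i - i + j}\right)_{i,j=1}^{\ell}.
\]
Applying the stable principal specialization (or, equivalently, using the Frobenius hook-content formula combined with the standard determinantal identity) yields
\[
f^\lambda = |\lambda|!\,\det\!\left( \frac{1}{(\lambda_i - i + j)!}\right)_{i,j=1}^\ell,
\]
which is precisely the Jacobi--Trudi formula for $f^\lambda$ cited as~\cite[Cor.~7.16.3]{StanleyV201}. (The expression displayed in the corollary is to be interpreted with factorials in the denominators; without them, the factor of $|\lambda|!$ would not produce the correct dimension.) Substituting this determinantal expression for $f^\lambda$ and the Renteln determinant for $\eta_1^\lambda$ into $d_\lambda = f^\lambda \, \eta_1^\lambda$ immediately yields
\[
d_\lambda = |\lambda|!\,\det\!\left( \frac{1}{(\lambda_i - i + j)!}\right)_{i,j=1}^\ell \cdot \det W(\lambda) V(\lambda)^{-1},
\]
as claimed.

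Since both ingredients are already in hand, there is no genuine obstacle to overcome: the proof is essentially a one-line assembly. The only small point requiring care is the bookkeeping that padding the partition $\lambda$ with trailing zeros (to match the size of the matrices in $W$, $V$, and the Jacobi--Trudi matrix) leaves both determinants unchanged; this follows because adding a row $i = \ell+1$ with $\lambda_{\ell+1} = 0$ gives a row of the form $(\cdots, 1/(j-\ell-1)!)$ in the Jacobi--Trudi matrix (with the convention $1/m! = 0$ for $m<0$), and an analogous triangular extension in $W(\lambda)V(\lambda)^{-1}$, so the product of the two determinants is unaffected by the choice of $\ell$. With this observation the identity holds for $\lambda$ viewed as a partition of any fixed length $\ell \geq \ell(\lambda)$, completing the proof.
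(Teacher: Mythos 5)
Your proof is correct and is exactly the argument the paper intends: combine $d_\lambda = f^\lambda\,\eta_1^\lambda$ with the just-proved determinantal formula $\eta_1^\lambda = \det W(\lambda)V(\lambda)^{-1}$ and the Jacobi--Trudi/Frobenius determinant for $f^\lambda$. You are also right to flag that the entries of the first determinant should read $1/(\lambda_i - i + j)!$ (with the convention $1/m! = 0$ for $m<0$); as printed the statement omits the factorial, without which the factor $|\lambda|!$ would not yield $f^\lambda$.
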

\noindent However, the fact that $d_\lambda$ can be written as the determinant of a $\ell \times \ell$ matrix actually goes back to a result of Goulden and Jackson concerning determinantal expressions of \emph{immanants}, a $\lambda$-generalization of the permanent and determinant defined as follows:
$$\text{Imm}_\lambda(A) := \sum_{\pi \in S_n} \chi^\lambda(\pi) A_{i,\pi(i)}$$
where $A$ is any $n \times n$ matrix. Indeed, if we consider the adjacency matrix of the complete graph $K_n = J_n - I_n$ where $J_n$ is the $n \times n$ all-ones matrix, then we have
\[
	\text{Imm}_\lambda(K_n) = \sum_{\pi \in S_n} \chi^\lambda(\pi) \prod_{i=1}^n(K_n)_{i,\pi(i)} =  \sum_{\pi \in D_n} \chi^\lambda(\pi) = d_\lambda.
\]
Via the MacMahon master theorem, Goulden and Jackson~\cite[Theorem 2.1]{GouldenJ92} produce a $\ell \times \ell$ matrix $A'$ for which $\text{Imm}_\lambda(K_n) = \det A'$. The foregoing shows that this determinant has a natural combinatorial interpretation that can also be evaluated efficiently. 

We note that the original Ku--Wales theorem (see Section~\ref{sec:intro}) appears to be somewhat related to a dominance result of Pate~\cite{Pate92} on normalized immanant inequalities of positive semi-definite Hermitian matrices, a classical subject initiated by Schur. In particular, Pate shows that $ \text{Imm}_\mu(B)/f^\mu \leq \text{Imm}_\lambda (B)/f^\lambda$ for all $\mu \nearrow \lambda$ such that $\mu_{\ell(\mu)} = 1$ and $B$ is positive semi-definite. James~\cite{James92} shows for any $\mu,\lambda \vdash n$, that if $ \text{Imm}_\mu(B)/f^\mu \leq \text{Imm}_\lambda (B)/f^\lambda$, then $\mu \trianglelefteq \lambda$, thus Pate's result is a partial converse (the full converse is known to be false). We are unsure how exactly the Ku--Wales theorem fits into this literature, as $K_n$ is not positive semi-definite; nevertheless, it is curious that the absolute values of its immanants still obey an immanant dominance property with respect to the dominance ordering on partitions.

For each $\lambda \vdash n$, let $\text{Imm}_\lambda(xI - A)/f^\lambda$ be the \emph{(normalized) immanantal polynomial} of $A$.
Without much additional effort we can derive explicit formulas for the coefficients of $\text{Imm}_\lambda(xI - K_n)$, which are also of combinatorial significance. Indeed, for each $0 \leq k \leq n$, it is not difficult to show that the coefficient of $(-1)^{n-k}x^k$ is the $\lambda$-eigenvalue of the \emph{$k$-derangement graph}, i.e., the Cayley graph of $S_n$ generated by all permutations with precisely $k$ fixed points, which has been the subject of many papers in algebraic graph theory.
\begin{theorem} For all $\lambda \vdash n$, we have
\[
	\emph{Imm}_\lambda(xI - K_n)/f^\lambda = \sum_{k=0}^n  (-1)^{n-k} \left[ \sum_{\mu \nearrow^k \lambda } f^\mu\frac{s^\star_\mu(\lambda)}{(n-k)!} \eta^\mu_1 \right] x^k.
\]
\end{theorem}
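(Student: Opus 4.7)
The plan is to compute $\mathrm{Imm}_\lambda(xI - K_n)$ by sorting permutations by fixed-point count, then to use the branching rule for $\chi^\lambda$ together with the Okounkov--Olshanski specialization formula already invoked in Theorem~\ref{thm:jack} to reshape the answer into the stated $\eta_1^\mu$-expansion. Since $K_n = J_n - I_n$, the matrix $xI_n - K_n$ has diagonal entries equal to $x$ and off-diagonal entries equal to $-1$. For any $\pi \in S_n$ with exactly $k$ fixed points, $\prod_i (xI - K_n)_{i,\pi(i)} = x^k (-1)^{n-k}$, so
\[
\mathrm{Imm}_\lambda(xI - K_n) = \sum_{k=0}^n (-1)^{n-k}\, x^k \sum_{\substack{\pi \in S_n \\ \mathrm{fix}(\pi) = k}} \chi^\lambda(\pi).
\]
Everything therefore reduces to evaluating the inner character sum for each $k$.

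Any $\pi$ with $\mathrm{fix}(\pi) = k$ factors uniquely as $\sigma \cdot \mathrm{id}_S$, where $S \in \binom{[n]}{k}$ is its fixed-point set and $\sigma$ is a derangement of the complement $A = [n]\setminus S$. Restricting $\chi^\lambda$ to the Young subgroup $S_A \times S_S$ and using $\chi^{\lambda/\mu} = \sum_\tau c^\lambda_{\mu\tau}\chi^\tau$ evaluated at the identity of $S_S$ gives the standard branching identity
\[
\chi^\lambda(\sigma \cdot \mathrm{id}_S) = \sum_{\substack{\mu \subseteq \lambda \\ |\mu| = n-k}} f^{\lambda/\mu}\, \chi^\mu(\sigma).
\]
Summing over the $\binom{n}{k}$ choices of $S$ and over the derangements of $A$ yields
\[
\sum_{\mathrm{fix}(\pi) = k} \chi^\lambda(\pi) = \binom{n}{k} \sum_{\substack{\mu \subseteq \lambda \\ |\mu| = n-k}} f^{\lambda/\mu}\, d_\mu.
\]

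To put this in the form of the theorem, I would then combine two ingredients already present in the paper: the definition $\eta^\mu_1 = d_\mu/f^\mu$, which gives $d_\mu = f^\mu \eta^\mu_1$; and the Okounkov--Olshanski formula \cite[Proposition~5.2]{OkounkovO97} specialized to $\alpha=1$, which (for $|\mu| = n-k$) reads
\[
f^{\lambda/\mu} = \frac{f^\lambda\, s^\star_\mu(\lambda)}{n(n-1)\cdots(n-|\mu|+1)} = \frac{k!\, f^\lambda\, s^\star_\mu(\lambda)}{n!}.
\]
Substituting both identities and collecting factors converts $\binom{n}{k} f^{\lambda/\mu} d_\mu / f^\lambda$ into $f^\mu s^\star_\mu(\lambda) \eta^\mu_1 / (n-k)!$. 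The sum over subshapes $\mu \subseteq \lambda$ with $|\mu| = n-k$ is precisely the index $\mu \nearrow^k \lambda$, since the vanishing property $s^\star_\mu(\lambda) = 0$ for $\mu \not\subseteq \lambda$ makes the two ranges of summation agree. Dividing by $f^\lambda$ produces the claimed formula.

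There is no real technical obstacle here: both the Littlewood--Richardson branching identity and the Okounkov--Olshanski normalization are off-the-shelf, and the fixed-point decomposition of the immanant is forced by the diagonal/off-diagonal structure of $xI - K_n$. The only subtle bookkeeping step is matching the combinatorial meaning of $\mu \nearrow^k \lambda$ with the branching indexing set; this is settled by the vanishing of $s^\star_\mu$ off of $\lambda$'s downset.
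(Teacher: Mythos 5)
Your proposal is correct and follows essentially the same route as the paper: decompose the immanant by fixed-point count, apply the branching rule to reduce $\chi^\lambda$ on permutations with fixed-point set $S$ to a sum of $f^{\lambda/\mu}\chi^\mu$ over derangements of the complement, and then convert $\binom{n}{k}f^{\lambda/\mu}/f^\lambda$ into $f^\mu s^\star_\mu(\lambda)/(n-k)!$ via \cite[Proposition~5.2]{OkounkovO97} together with $d_\mu = f^\mu\eta^\mu_1$. The only cosmetic difference is that you phrase the branching via Littlewood--Richardson coefficients evaluated at the identity rather than iterating the one-box branching rule $k$ times; the bookkeeping and the final identification of the index set $\mu \nearrow^k \lambda$ are the same.
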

\begin{proof}
 Recall that the $s_\lambda^\star$'s are the shifted Schur polynomials, i.e., the unnormalized shifted Jack polynomials at $\alpha=1$, and that $f^\lambda$ is the number of standard Young tableaux of shape $\lambda$. By the definition of the immanantal polynomial, we have
\begin{align*}
\text{Imm}_\lambda(xI - K_n)/f^\lambda &= \frac{1}{f^\lambda} \sum_{\pi \in S_n} \chi^\lambda(\pi) \prod_{i=1}^n(xI-K_n)_{i,\pi(i)}\\
\intertext{For any $k$-set $I \subseteq \binom{[n]}{k}$, let $S_n^I \subseteq S_n$ be the set of permutations such that $\sigma(i)= i$ for all $i \in I$ and $\sigma(j) \neq j$ for all $j \notin I$.}
&= \frac{1}{f^\lambda} \sum_{k=0}^n\sum_{I \subseteq \binom{[n]}{k}} \sum_{\sigma \in S_n^I }x^k (-1)^{n-k} \chi^\lambda(\sigma) \\
\intertext{For any character $\chi$ of $S_n$, let $\chi \! \! \downarrow_{S_{n-k}}$ denote the restriction to the subgroup $S_{n-k}$.}
&= \frac{1}{f^\lambda}\sum_{k=0}^n \binom{n}{k} \sum_{\pi \in D_{n-k}}x^k(-1)^{n-k} \chi^\lambda \!\! \downarrow_{S_{n-k}}(\pi) \\
\intertext{To compute this restriction we iterate the branching rule $k$ times (see \cite{Sagan01}, for example). It is well-known that the multiplicity of $\mu \vdash (n-k)$ in the restriction of $\lambda$ to $S_{n-k}$ is $f^{\lambda/\mu}$, the number of standard skew tableaux of skew shape $\lambda / \mu$, equivalently, the number of distinct ways of successively adding $k$ outer corners to obtain $\lambda$ starting from $\mu$. This gives}
&= \frac{1}{f^\lambda}\sum_{k=0}^n x^k(-1)^{n-k}\binom{n}{k} \sum_{\mu \nearrow^k \lambda } f^{\lambda/\mu} \sum_{\pi \in D_{n-k}} \chi^\mu(\pi) \\
&= \frac{1}{f^\lambda}\sum_{k=0}^n (-1)^{n-k} \binom{n}{k} \sum_{\mu \nearrow^k \lambda } f^{\lambda/\mu} f^\mu \eta^\mu_1 x^k\\
\intertext{where $\mu$ ranges over all shapes on $n-k$ cells obtained by removing $k$ outer corners successively from $\lambda$. Note that $\sum_{\mu \nearrow^k \lambda } f^{\lambda/\mu} f^\mu = f^\lambda$, thus the coefficients are convex combinations of $\mu$-eigenvalues of $\Gamma_{n-k,1}$. By~\cite[Proposition 5.2]{OkounkovO97}, we may write }
&= \sum_{k=0}^n  (-1)^{n-k}\sum_{\mu \nearrow^k \lambda } f^\mu \frac{s^\star_\mu(\lambda)}{(n-k)!}  ~\eta^\mu_1 x^k,
\end{align*}
which completes the proof.
\end{proof}
\noindent For small $k$ we obtain reasonable expressions as positive linear combinations of eigenvalues of $\Gamma_{n-k,1}$; however, these formulas quickly become unwieldy as $k$ increases. On the other hand, when $k$ is close to $n$, these coefficients can also be efficiently computed through other means, as $|\mu|$ is small. For example, the coefficient of $x^{n-2}$ is the $\lambda$-eigenvalue of the well-known \emph{transposition graph}, i.e., the Cayley graph of $S_n$ generated by all its transpositions. It would be interesting to obtain more explicit expressions for $[x^k]~\text{Imm}_\lambda(xI - K_n)/f^\lambda $. One barrier is that nice expressions for $s^\star_\mu(\lambda)/(n-k)!$ are only known in special cases, not for arbitrary $\mu \subseteq \lambda$, which itself is an open question. For more details on the $k$-derangement graphs, we refer the reader to the recent survey~\cite{LiuZ22}.

\section{Eigenvalues of the Perfect Matching Derangement Graph}\label{sec:eigs2}

We now move onto the perfect matching derangement graph, i.e., the case where $\alpha = 2$. The situation here is complicated by the fact that the upper and lower hook lengths do not coincide. We first consider the two row case, which has an interesting connection to derangements of the \emph{hyperoctahedral group} $B_n = \mathbb{Z}_2 \wr S_n \leq S_{2n}$, that is, the automorphism group of the hypercube $\{\pm 1\}^n$. Below we recall some results of Chen and Stanley~\cite{ChenS93} that will give a combinatorial interpretation of the two-row Jack derangement sums for $\alpha = 2$.

The elements $w$ of $B_n$ can be represented as \emph{signed permutations}, i.e., a permutation of $[n]$ along with a plus or minus sign attached to each symbol. To represent the signing, we adopt the shorthand $\bar{i} := i^-$ and $i := i^+\!$, e.g., $(2,4,\bar{5})(\bar{3})(1,\bar{6}) \in B_6$. Following Chen and Stanley~\cite{ChenS93}, we say that $w \in B_n$ is \emph{balanced} if each of its cycles has an even number of minus signs. For example, the element $(\bar{2},4,\bar{5})({3})(\bar{1},\bar{6})$ is balanced, whereas $(\bar{2},4,\bar{5})(\bar{3})(1,{6})$ is unbalanced. By~\cite[Cor.~2.4]{ChenS93}, the number of balanced elements of $B_n$ equals $(2n-1)!!$. We say $w \in B_n$ is \emph{totally unbalanced} if each of its cycles are unbalanced. By~\cite[Prop.~3.1]{ChenS93}, the number of totally unbalanced elements of $B_n$ also equals $(2n-1)!!$.  Chen and Stanley define an element $w \in B_n$ to be \emph{$k$-separable} if the cycles of $w$ can be partitioned into two parts $A,B$ such that every cycle of $A$ is balanced and the sum of the cycle lengths of the cycles in $B$ equals $k$. Moreover, they show that these are precisely the elements of $B_n$ that fix some $k$-dimensional subcube $\{\pm1\}^k$ of $\{\pm 1\}^n$~\cite[Prop.~2.2]{ChenS93}. Note that if $k=0$, then a 0-dimensional subcube is taken to be a vertex of the hypercube. 

Let $\mathcal{E}_n$ be the set of derangements (fixed-point-free elements) of $B_n$, i.e.,
\[
	\mathcal{E}_n = \{ w \in B_n : w(j) \neq j \text{ for all } j \in [n] \}.  
\] 
It is well-known that $|\mathcal{E}_n| = d_{n,0}^{(2)}$. Every totally unbalanced element of $B_n$ belongs to $\mathcal{E}_n$. The combinatorial proof given on~\cite[pg.~70]{ChenS93} extends to a bijection between balanced signed permutations of $B_n$ and $\mathcal{M}_{2n}$ that also maps fixed-point-free balanced signed permutations of $B_n$ to \emph{perfect matching derangements} of $\mathcal{M}_{2n}$:
$$\mathcal{D}_{2n}' := \{m \in \mathcal{M}_{2n} : m \cap m^* = \emptyset\}$$ 
where $m^* = \{\{1,\bar{1}\},\{2,\bar{2}\}, \cdots \{n,\bar{n}\}\}$.  
Finally, recall that our combinatorial proof in Section~\ref{sec:php} of Theorem~\ref{thm:main} at $\alpha = 2$ shows $|\eta^\lambda_2| = |\mathcal{D}_\lambda'|$ where $\mathcal{D}_\lambda'$ is the set of derangements of $\lambda$-colored perfect matchings $\mathcal{M}_\lambda$. 

The foregoing observations give an interesting interpretation of $\eta_2^\lambda$ for two-row shapes $(n-k,k)$ in terms of $B_{n-k}$-derangements that stabilize a fixed hypercube $\{\pm 1\}^k \subseteq \{\pm 1\}^{n-k}$. 
\begin{theorem}
For all two-row shapes $\lambda = (n-k,k)$, we have
\begin{align*}
	\eta^\lambda_2 &= (-1)^k \sum_{i=0}^{k}  \binom{k}{i}(2i-1)!! ~|\mathcal{D}_{2(n-k-i)}'|\\
	 &= (-1)^{k}~ |\{ \sigma \in \mathcal{E}_{n-k} : \sigma \text{ fixes $\{\pm 1\}^{k}$} \}|.
\end{align*}
\end{theorem}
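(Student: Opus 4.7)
The plan is to prove the two displayed equalities separately. The first is a direct combinatorial enumeration of $|\mathcal{D}_\lambda'|$ for $\lambda = (n-k,k)$, organized by color pattern. The second rewrites each summand via Chen and Stanley's bijection between balanced signed permutations and perfect matchings, so that the sum becomes a count of signed permutations in $\mathcal{E}_{n-k}$ satisfying the stated subcube-fixing condition.

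For the first equality, I would invoke Theorem~\ref{thm:main} at $\alpha = 2$ to write $\eta_2^\lambda = (-1)^k |\mathcal{D}_\lambda'|$, and then stratify $\mathcal{D}_{(n-k,k)}'$ by the subset $J \subseteq [k]$ of slots carrying color $2$ (color $2$ being available only for $j \leq k$). Because matched slots must share their color, the $2|J|$ symbols lying in $J$-slots are automatically matched amongst themselves, and any of the $(2|J|-1)!!$ perfect matchings on these vertices is admissible---the derangement condition $c_j \neq 1$ on a diagonal edge $\{2j-1, 2j\}$ with $j\in J$ is already met by $c_j = 2$. The remaining $2(n-k-|J|)$ color-$1$ symbols must then form a perfect matching that avoids every diagonal edge, contributing $|\mathcal{D}_{2(n-k-|J|)}'|$ options. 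Summing $\binom{k}{i}$ over $|J|=i$ and then over $i$ recovers the claimed formula.

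For the second equality, I would apply Chen and Stanley's bijection in two forms: $(2i-1)!! = |B_i^{\mathrm{balanced}}|$ for the balanced matchings on $2i$ vertices, and the restricted identity $|\mathcal{D}_{2(n-k-i)}'| = |\mathcal{E}_{n-k-i}^{\mathrm{balanced}}|$ for derangement matchings. Each summand then enumerates triples $(I, w_I, w_{I^c})$ with $I \subseteq [k]$ of size $i$, $w_I \in B_i^{\mathrm{balanced}}$ supported on the coordinates in $I$, and $w_{I^c} \in \mathcal{E}_{n-k-i}^{\mathrm{balanced}}$ supported on $[n-k] \setminus I$. Gluing $w_I$ with $w_{I^c}$ yields a balanced element $\sigma$ of $B_{n-k}$ whose only possible fixed points lie in $I$; Chen and Stanley's Proposition~2.2 identifies such $\sigma$ with precisely those signed permutations fixing a $k$-dimensional subcube of $\{\pm 1\}^{n-k}$, and intersecting with $\mathcal{E}_{n-k}$ delivers the target set.

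The main obstacle is to verify that this triple-to-$\sigma$ assignment is genuinely bijective rather than merely surjective. Each $\sigma \in \mathcal{E}_{n-k}$ fixing some $k$-dimensional subcube may \emph{a priori} admit several compatible $I$'s, and so one must isolate a canonical choice. The natural candidate is to take $I$ to be the coordinate support of the cycles of $\sigma$ corresponding to the ``free'' directions of the fixed subcube in the Chen--Stanley cycle-partition description; the challenge is proving this prescription depends only on $\sigma$ and recovers $w_I$, $w_{I^c}$ uniquely. Once this uniqueness is established, the two equalities in the theorem follow immediately by combining the first equality with Theorem~\ref{thm:main} and the Chen--Stanley correspondence.
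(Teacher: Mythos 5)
Your proof of the first equality is correct, and it is the argument the paper intends: Theorem~\ref{thm:main} at $\alpha=2$ gives $\eta_2^{(n-k,k)}=(-1)^{k}|\mathcal{D}'_{(n-k,k)}|$, and stratifying the colored derangements by the set of slots carrying color $2$ yields the sum. (The paper states this theorem without a displayed proof, as a consequence of the Chen--Stanley facts it has just recalled, so the real question is whether your second step works.)

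The second equality is where your argument breaks. You let $w_I$ range over the \emph{balanced} elements of $B_i$. A balanced element may contain positive $1$-cycles, i.e.\ fixed points, so the glued element $\sigma=w_I\sqcup w_{I^c}$ need not lie in $\mathcal{E}_{n-k}$; ``intersecting with $\mathcal{E}_{n-k}$'' then discards some triples and identifies others, and the count no longer matches the left-hand side. Concretely, for $\lambda=(2,2)$ the $i=2$ term contributes $3!!=3$ balanced elements of $B_2$, namely the identity and the two balanced $2$-cycles; the identity is not a derangement, and the two balanced $2$-cycles already arise from the $i=0$ term, so your assignment is neither into $\mathcal{E}_2$ nor injective. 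The ingredient you need is Chen--Stanley's \emph{other} $(2i-1)!!$ count, Proposition~3.1: the \emph{totally unbalanced} elements of $B_i$. These have no positive $1$-cycles, so $\sigma$ is automatically a derangement, and the decomposition becomes canonical --- $I$ is recovered from $\sigma$ as the union of the supports of its unbalanced cycles --- which simultaneously settles the injectivity issue you flag at the end. Finally, the set you then land on is $\{\sigma\in\mathcal{E}_{n-k}:\text{every unbalanced cycle of }\sigma\text{ is supported in the first }k\text{ coordinates}\}$, and your appeal to Chen--Stanley's Proposition~2.2 ($k$-separability $\Leftrightarrow$ fixing some $k$-dimensional subcube) does not identify this set correctly: the plain transposition $(1\,2)\in\mathcal{E}_2$ belongs to it for $k=1$ yet is not $1$-separable and stabilizes no edge of the square. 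The stabilizer condition has to be read as ``all unbalanced cycles of $\sigma$ lie among the $k$ free coordinates of the distinguished subcube'' (equivalently, $\sigma$ fixes some face of that subcube), not as $k$-separability.
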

\noindent Note that when $n$ is even and $k = n/2$, we have $\eta^{(n/2,n/2)}_2 = (-1)^{n/2} |\mathcal{E}_{n/2}|$, hence the two-row shapes interpolate between the derangements that stabilize a fixed vertex of the hypercube and the derangements that stabilize the whole hypercube. 

Recall that for $\alpha = 1$ we ignored all indices $j$ corresponding to $\leftarrow$ moves in the lattice path induced by $\lambda$, since $H_*^1(\lambda,j) = 0$ in these cases. This is no longer the case for $\alpha = 2$; however, we can still identify the non-vanishing terms via lattice paths. Here, instead of each vertical move $\downarrow$ descending by a single row, each vertical move descends by two rows, and as before we ignore horizontal moves $\leftarrow$ if they border a row of $\lambda$. 
For example, if $\lambda = (10,6,3,1)$, then we ignore the indices $j = 5,6,7$ corresponding to arrows that border the second row (see Figure~\ref{fig:board} at $\alpha = 2$):
\[
\ytableausetup{mathmode, boxsize=1.5em}
\begin{ytableau}
~ & ~ & ~ & ~ & ~ & ~ & ~ & ~ & ~ & ~~~\raisebox{-8pt}{\Big\downarrow} \\
~ & ~  &~ &~ & ~ & ~ & \none[\raisebox{-5pt}{$\leftarrow$}] & \none[\raisebox{-5pt}{$\leftarrow$}]  &  \none[\raisebox{-5pt}{$\leftarrow$}] &  \none[\raisebox{-5pt}{$\leftarrow$}] \\ 
~ & ~ & ~~~\raisebox{-8pt}{\Big\downarrow}  &  \none[~]  &  \none[~] &  \none[~]  \\
~ & \none[\raisebox{-5pt}{$\leftarrow$}]&  \none[\raisebox{-5pt}{$\leftarrow$} ]
\end{ytableau}~.
\]
Evidently, when all parts of $\lambda'$ are even, we recover essentially the same upper hook product expressions as in the $\alpha = 1$ case. 
\begin{corollary}\label{cor:doublyeven}
For all $\lambda \vdash n$ such that each part of $\lambda'$ is even, let $\mu$ be the partition obtained from $\lambda$ by removing all rows of even index, and let $\mu = (a_1,\ldots,a_d~|~b_1,\ldots,b_d)$. Then 
	\begin{align*}
		\eta^\lambda_2 
		&= (-1)^{|\mu|} \sum_{i \leq \mu_i+1} (-1)^{\mu_i}  p_{\mu_1, a_1 - a_i}^{(2)} ~H_{i}^+(\mu),
	\end{align*}
	where $p_{m,k}^{(2)}$ is the probability that an element of $B_m$ has precisely $k$ fixed points.
\end{corollary}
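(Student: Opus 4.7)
The plan is to reduce this $\alpha=2$ corollary to the $\alpha=1$ machinery already in place by exploiting the structural consequence of the hypothesis. The condition that every part of $\lambda'$ is even is equivalent to $\lambda_{2i-1}=\lambda_{2i}$ for all $i$, so $\mu_i=\lambda_{2i-1}$, $\mu_1=\lambda_1$, and $\lambda'_i=2\mu'_i$. Setting $\alpha=2$, the first-row lower hook lengths of $\lambda$ then factor cleanly:
\[
h^\lambda_*(1,i)=2(\lambda_1-i)+\lambda'_i=2\bigl((\mu_1-i)+\mu'_i\bigr)=2\,h_\mu(1,i),
\]
where $h_\mu(1,i)$ denotes the ordinary ($\alpha=1$) hook length in the first row of $\mu$. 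Consequently the shifted principal lower hook product of $\lambda$ at $\alpha=2$ is, up to the scalar $2^{\mu_1}$, the corresponding $\alpha=1$ polynomial for $\mu$:
\[
H^1_*(\lambda,j)=\prod_{i=1}^{\mu_1}\bigl(h^\lambda_*(1,i)-2j\bigr)=2^{\mu_1}\prod_{i=1}^{\mu_1}\bigl(h_\mu(1,i)-j\bigr).
\]

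Next I would plug this into Theorem~\ref{thm:dnk} at $\alpha=2$ with $n=\mu_1$. The factor $2^{\mu_1}$ cancels against the prefactor $1/(2^{\mu_1}\mu_1!)$ arising there, giving
\[
\eta^\lambda_2=(-1)^{|\lambda|-\mu_1}\,\frac{1}{\mu_1!}\sum_{j=0}^{\mu_1}d^{(2)}_{\mu_1,j}\prod_{i=1}^{\mu_1}\bigl(h_\mu(1,i)-j\bigr).
\]
The summand is now formally identical to the expression controlled in the proof of Theorem~\ref{thm:eigen1} applied to $\mu$, with the rencontres counts $d_{\mu_1,j}$ replaced by their hyperoctahedral analogues $d^{(2)}_{\mu_1,j}$, which after dividing by $|B_{\mu_1}|$ become the probabilities $p^{(2)}_{\mu_1,j}$.

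The final step is to replay verbatim the vanishing-and-evaluation argument from the proof of Theorem~\ref{thm:eigen1}, applied to $\mu$ in place of $\lambda$. Concretely, $\prod_{i'}(h_\mu(1,i')-j)$ vanishes unless $j\in\nu(\mu)$; and the non-vanishing shifts are indexed precisely by those $i$ with $i\le\mu_i+1$, where $j=\mu_1-\mu_i+i-1=a_1-a_i$ in Frobenius coordinates. For such an $i$ one has $\bigl|\prod_{i'}(h_\mu(1,i')-j)\bigr|=H_i^+(\mu)$ with sign $(-1)^{\mu_1-\mu_i}$. Substituting this closed form into the sum and invoking $d^{(2)}_{\mu_1,a_1-a_i}/\mu_1!=2^{\mu_1}p^{(2)}_{\mu_1,a_1-a_i}$ leaves only sign-bookkeeping via $|\lambda|=2|\mu|$ to match the target display.

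The main obstacle is purely the sign/prefactor accounting at the end: one needs to combine $(-1)^{|\lambda|-\mu_1}$ (from Theorem~\ref{thm:dnk}), $(-1)^{\mu_1-\mu_i}$ (from the signed evaluation of the hook polynomial on $\nu(\mu)$), and the $2^{\mu_1}$ that comes back when converting $d^{(2)}_{\mu_1,\cdot}/\mu_1!$ into probabilities $p^{(2)}_{\mu_1,\cdot}$. All of the combinatorial content is supplied by the $\alpha=1$ analysis on $\mu$ that was already carried out in Section~\ref{sec:eigs}; the new input is exactly the doubling identity $h^\lambda_*(1,i)=2h_\mu(1,i)$ together with the fact that $|\lambda|$ is automatically even whenever the hypothesis holds.
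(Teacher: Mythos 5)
Your reduction is exactly the intended one: the paper offers only the heuristic ``each vertical move descends by two rows, so we recover the $\alpha=1$ expressions,'' and your doubling identity $h^\lambda_*(1,i)=2(\lambda_1-i)+\lambda_i'=2h_\mu(1,i)$ (valid precisely because every column of $\lambda$ has even length, i.e.\ $\lambda_{2i-1}=\lambda_{2i}$) is the correct way to make that precise. Your intermediate formula
\begin{equation*}
\eta^\lambda_2=(-1)^{|\lambda|-\mu_1}\,\frac{1}{\mu_1!}\sum_{j=0}^{\mu_1}d^{(2)}_{\mu_1,j}\prod_{i=1}^{\mu_1}\bigl(h_\mu(1,i)-j\bigr)
\end{equation*}
is correct, as is the replay of the vanishing-and-evaluation step from Theorem~\ref{thm:eigen1} applied to $\mu$.

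The gap is the final step you defer as ``purely sign/prefactor accounting'': if you actually carry it out, it does \emph{not} land on the displayed statement. Since $|\lambda|=2|\mu|$ is even, the signs combine as $(-1)^{|\lambda|-\mu_1}(-1)^{\mu_1-\mu_i}=(-1)^{\mu_i}$ with global sign $+1$, and the conversion $d^{(2)}_{\mu_1,k}/\mu_1!=2^{\mu_1}p^{(2)}_{\mu_1,k}$ that you yourself record reinstates a factor $2^{\mu_1}$. What your argument proves is
\begin{equation*}
\eta^\lambda_2=2^{\mu_1}\sum_{i\le\mu_i+1}(-1)^{\mu_i}\,p^{(2)}_{\mu_1,a_1-a_i}\,H_i^{+}(\mu),
\end{equation*}
which differs from the corollary's display by the factor $(-1)^{|\mu|}2^{\mu_1}$. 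This is not a bookkeeping triviality: for $\lambda=(2,2)$ one has $\mu=(2)$, $\eta^{(2,2)}_2=5$, $p^{(2)}_{2,0}=5/8$, $H_1^+(\mu)=2$, and the displayed formula evaluates to $5/4$ while your derivation gives $4\cdot\tfrac{5}{8}\cdot 2=5$; for $\lambda=(1,1)$ the display gives $+\tfrac12$ while $\eta^{(1,1)}_2=-1$, again matching your derivation. So either the printed normalization of $H_i^+(\mu)$ (or of $p^{(2)}$) must be reinterpreted to absorb $(-1)^{|\mu|}2^{\mu_1}$ --- and computing $H_i^+(\mu)$ with $\alpha=2$ hooks does not do this in general --- or the stated constant needs correcting. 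Either way, a complete proof must execute this last computation rather than assert that it matches; as written, the proposal stops exactly where the discrepancy appears.
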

\noindent As an aside, we note that similar results can be shown for all $\alpha \in \mathbb{N}_{+}$ by considering the rencontres numbers associated with the group $S_\alpha \wr S_n$, and in these cases one can derive determinantal formulas as we did in the previous section \emph{mutatis mutandis}.

We conclude this section with a somewhat more complicated formula for the eigenvalues of the perfect matching derangement graph in terms of extended lower hook products. To see why we should not immediately expect an expression as simple as the $\alpha = 1$ case for all $\lambda$, it is instructive to consider the one-row case:
\[
	\eta_2^{(n)} =  |\mathcal{D}_{2n}'| = \sum_{k=0}^n (-1)^k p_{n,k}^{(2)} (2k-1)!! (2(n-k)-1)!!. 
\]
Recall that for $\alpha = 1$, this summation is just a single term $\eta_1^{(n)} = d_{n,0}n!$. Indeed, one expects a more involved expression for $\alpha = 2$, due to the fact that even though we have $|\mathcal{D}_{2n}| = (2n-1)!!(1/\sqrt{e} + o(1))$, the $o(1)$ term does not converge to 0 nearly as fast as in the case of permutation derangements. In particular, the $n$th partial sum of the expansion of $e^{-1/2}$ is just an approximation of the probability of drawing a derangement from $\mathcal{M}_{2n}$.

In light of the lattice path interpretation given above, it will be useful to think of the parts of $\lambda$ as being grouped into consecutive pairs $\lambda_{2i-1},\lambda_{2i}$ where the difference $\lambda_{2i-1} - \lambda_{2i}$ between consecutive rows gives the order of the approximation, roughly speaking. By default, if $\lambda$ has less than $2i$ parts, then we set $\lambda_{2i} := 0$.
We define a shifted analogue of the extended lower hook products as follows
\[
	H^i_+(\lambda,j) := H^i_*(\lambda,j)H^i_*(\lambda^c,j),
\]
i.e., the product obtained by subtracting each factor of $H^i_+(\lambda)$ by $\alpha j$. 
\begin{theorem}[Eigenvalues of $\Gamma_{n,2}$]\label{thm:eigen2} For all $\lambda \vdash n$, we have
	\begin{align*}
		\eta^\lambda_2 
		&= (-1)^n \!\!\!\!\!\!\!\! \sum_{\substack{ i = 1 \\ 2i-1 \leq \lambda_{2i-1} + 1}} \!\!\!\!\!\!\!\! (-1)^{\lambda_{2i-1}} \!\!\!\! \sum_{\substack{j = 0 \\ 2i - 1 + j \leq \lambda_1}}^{\lambda_{2i-1} - \lambda_{2i}} \!\!\!\! (-1)^{j} ~p^{(2)}_{\lambda_1,a_1-a_i+j} ~ H^{2i-1}_+(\lambda,j).
	\end{align*}
\end{theorem}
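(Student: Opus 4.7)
The plan is to apply Theorem~\ref{thm:dnk} at $\alpha=2$ with $n=\lambda_1$ to express
\[
\eta^\lambda_2 = (-1)^{|\lambda|-\lambda_1}\sum_{j=0}^{\lambda_1} p^{(2)}_{\lambda_1,j}\, H^1_*(\lambda,j),
\]
and then to reorganize the sum by analyzing the vanishing locus of $H^1_*(\lambda,j) = \prod_{k=1}^{\lambda_1}(h_k-2j)$ and grouping the nonzero terms according to pairs of consecutive rows of $\lambda$.

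First I would identify the zeros of the product. Since $h_k = 2(\lambda_1-k)+\lambda'_k$, the factor $(h_k-2j)$ vanishes exactly when $\lambda'_k$ is even and $j=\lambda_1-k+\lambda'_k/2$. Columns $k$ with $\lambda_{2i}<k\le\lambda_{2i-1}$ have odd $\lambda'_k=2i-1$ and impose no vanishing constraint, whereas columns with $\lambda_{2i+1}<k\le\lambda_{2i}$ have even $\lambda'_k=2i$ and contribute a contiguous run of forbidden $j$-values. These runs partition $\{0,\ldots,\lambda_1\}$ into consecutive blocks of surviving indices separated by blocks of forbidden indices; block $i$ contains $\lambda_{2i-1}-\lambda_{2i}+1$ surviving indices starting at $j_i^{\mathrm{start}}:=\lambda_1-\lambda_{2i-1}+i-1$, which equals $a_1-a_i$ under the modified Frobenius coordinate $a_i:=\lambda_{2i-1}-i$ used in the theorem.

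The central technical step is the factorization. For $j=j_i^{\mathrm{start}}+s$ with $0\le s\le\lambda_{2i-1}-\lambda_{2i}$, a direct calculation gives
\[
h_k-2j = h^\lambda_*(2i-1,k) - 2s \qquad \text{for } 1\le k\le\lambda_{2i-1},
\]
identifying the first $\lambda_{2i-1}$ factors of $H^1_*(\lambda,j)$ with the shifted lower hook product of row $2i-1$ of $\lambda$. For $k>\lambda_{2i-1}$, the cell $(2i-1,k)$ lies in $\lambda^c$ rather than $\lambda$, and the corresponding factors are negative; their absolute values match the upper hook lengths of $(2i-1,k)\in\lambda^c$ (via the $\text{rev}$ convention) shifted by $+2s$ instead of $-2s$. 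Multiplying both groups together gives $|H^1_*(\lambda,j_i^{\mathrm{start}}+s)|=H^{2i-1}_+(\lambda,s)$. For the sign, within each block exactly one factor $(h_k-2j)$ flips sign per unit increase of $s$, and a telescoping parity argument over the transitions between blocks shows that the initial sign at $s=0$ in block $i$ is $(-1)^{\lambda_1-\lambda_{2i-1}}$, yielding
\[
\mathrm{sgn}\,H^1_*(\lambda,j_i^{\mathrm{start}}+s)=(-1)^{\lambda_1+\lambda_{2i-1}+s}.
\]

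Substituting the factorization and sign into the Theorem~\ref{thm:dnk} expression, simplifying the global sign $(-1)^{|\lambda|-\lambda_1}(-1)^{\lambda_1}=(-1)^n$, and using $j_i^{\mathrm{start}}+s=a_1-a_i+s$, one recovers exactly the right-hand side of Theorem~\ref{thm:eigen2}. The main obstacle is the factorization step: matching the column factors with hook lengths of row $2i-1$ of $\lambda$ and $\lambda^c$ cell-by-cell is delicate because the two halves use opposite shift directions ($-2s$ for the lower hooks of $\lambda$, $+2s$ for the upper hooks of $\lambda^c$ via $\text{rev}$), and the sign bookkeeping requires care at block boundaries where several factors pass through zero in succession. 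Once this factorization is in hand, the remainder of the argument is routine algebraic manipulation.
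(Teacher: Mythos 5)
Your proposal is correct and follows exactly the route the paper intends: the paper states Theorem~\ref{thm:eigen2} without proof, leaving it as the $\alpha=2$ analogue of the short proof of Theorem~\ref{thm:eigen1}, namely apply Theorem~\ref{thm:dnk}, locate the zeros of $\prod_k(h_k-2j)$, and identify absolute value and sign of the surviving terms with the extended hook products. Your block decomposition, the identity $h_k-2j=h^\lambda_*(2i-1,k)-2s$, and the sign $(-1)^{\lambda_1+\lambda_{2i-1}+s}$ all check out (e.g.\ against $\lambda=(10,6,3,1)$ and the one-row case), so your write-up supplies precisely the details the paper omits.
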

\noindent One of the main obstacles towards getting an expression identical to the $\alpha = 1$ case is that the probability distribution $\{p_{n,i}^{(2)}\}$ is defined over the $(2n)!!$ elements of $B_n$, not the $(2n-1)!!$ elements of $S_{2n}/B_n \cong \mathcal{M}_{2n}$. Ideally, we seek a probability distribution $\{p_{n,i}'\}$ over perfect matchings such that $p_{n,i}'$ is the probability of drawing uniformly at random a perfect matching from $\mathcal{M}_{2n}$ that has $i$ edges in common with $m^*$. It seems that exact formulas for these probabilities cannot be expressed as succinctly as in the case of permutations (see the discussion above as well as the proof of Proposition~\ref{prop:lb}, for example). We leave it as an open question whether there is a more elegant formula for the $\alpha = 2$ case; nevertheless, we have given a closed form that is suitable for calculation and applications, as we have demonstrated in the previous sections.

\section{Future Work and Open Questions}

It may be worthwhile to study the Jack derangements, colored permutations, and colored perfect matchings from a purely combinatorial point of view. Indeed, one can verify that many of the well-known identities for derangements admit Jack analogues, and a closer study of their combinatorics may give more elegant formulas for the Jack derangements. 

In~\cite{Sniady19}, \'Sniady studies the Jack characters from the viewpoint of \emph{asymptotic representation theory}. Like the classical derangements, our expressions for the Jack derangements are quite amenable to asymptotic analysis, so it seems natural to consider the asymptotics of the Jack derangements and how they relate to \'Sniady's results (see also~\cite{CuencaDM23,DolegaS19}).

As discussed earlier, a byproduct of our main results at $\alpha = 1$ is a simple combinatorial form for $\text{Imm}_\lambda(J-I)$, which begs the question of whether other adjacency matrices have immanants with nice combinatorial properties. In particular, can one find nice combinatorial expressions for the immanants of adjacency matrices $A(G_n)$ of graph families $\{G_n\}$ besides $A(K_n) = J_n - I_n$? We refer the reader to~\cite{TessierMS} for more details on combinatorial interpretations of immanants. Along these lines, it would be quite interesting to find other unions of conjugacy classes $S \subseteq S_n$ such that the $\lambda$-eigenvalues of the normal Cayley graph $\text{Cay}(S_n,S)$ are counted by some ``$\lambda$-colored variant" of $S$.

Let $GL(n,q)$ be the group of $n \times n$ invertible matrices of $\mathbb{F}_q^{n \times n}$. We say that $g \in G$ is \emph{eigenvalue-free} if $\det(\lambda I - g) \neq 0$ for all $\lambda \in \mathbb{F}_q$~(see~\cite{NeumannP98} for a more detailed discussion). One can view such elements as a $q$-analogue of the derangements of $S_n$, and since the set of eigenvalue-free elements is a union of conjugacy classes of $GL(n,q)$, the eigenvalues of the normal Cayley graph generated by eigenvalue-free elements can be understood via the character theory of $GL(n,q)$. Like the symmetric group, there exists a characteristic map from the class algebra onto a Hopf algebra that allows one to get concrete (albeit extremely complicated) expressions for the irreducible characters of $GL(n,q)$ via symmetric function manipulations (see~\cite{GrinbergR14}, for example). In particular, a basis for this Hopf algebra can be defined in terms of \emph{Macdonald polynomials}, which can be seen as a $q$-analogue of the Jack polynomials~\cite[Ch.~VI]{Macdonald95}. A first step towards a full $q$-analogue of our main results would be to generalize what we have done here to Macdonald polynomials. Perhaps the combinatorics that arise in this work may give some insight as to what the right generalization should be.

\bibliographystyle{plain}
\bibliography{../research/master.bib}

\end{document}